\newtheorem{example}{Example}
\newcommand{\ees}{\mathfrak{s}}
\newcommand{\eel}{\mathfrak{w}}
\newcommand{\eeg}{\Gamma_{X}}
\newcommand{\eeL}{\mathfrak{L}}
\newcommand{\eeR}{\mathfrak{R}}
\newcommand{\cutie}{D(\vec{\omega})=[a_{j}(m_{j});b_{i}(n_{i})]_{q,t}}
\newcommand{\ve}{\varepsilon}
\theoremstyle{remark}
\newenvironment{proof}{\medskip \noindent \textbf{Proof.}\ }{\smallskip}
\newenvironment{notation}{\medskip \noindent \textbf{Notation.}\ }{\smallskip}
\begin{document}

\begin{frontmatter}

\title{Enhanced negative type for finite metric trees}

\author{Ian Doust, Anthony Weston}
\address{School of Mathematics and Statistics,
University of New South Wales, Sydney, New South Wales 2052, Australia}
\ead{i.doust@unsw.edu.au}
\address{Department of Mathematics and Statistics, Canisius College,
Buffalo, New York 14208, United States of America}
\ead{westona@canisius.edu}
\dedicated{Dedicated to Bernard Joseph Weston (and the spirit of the Eureka Stockade) \\
on the occasion of his 80th birthday --- 3 June 2007.}

\begin{abstract}
A finite metric tree is a finite connected graph that has no cycles, endowed with an edge weighted
path metric. Finite metric trees are known to have strict $1$-negative type.
In this paper we introduce a new family of inequalities (\ref{ONE}) that encode the best
possible quantification of the \textit{strictness} of the non trivial $1$-negative type inequalities
for finite metric trees.
These inequalities are
sufficiently strong to imply that any given finite metric tree $(T,d)$ must have strict $p$-negative type
for all $p$ in an open interval $(1-\zeta,1+\zeta)$, where $\zeta > 0$ may be chosen so as to depend \textit{only} upon the
unordered distribution of edge weights that determine the path metric $d$ on $T$.
In particular, if the edges of the tree are not weighted, then it follows
that $\zeta$ depends only upon the number of vertices in the tree.

We also give an example of an infinite metric
tree that has strict $1$-negative type but does not have $p$-negative type for any $p > 1$.
This shows that the maximal $p$-negative type of a metric space can be strict.
\end{abstract}

\begin{keyword}
{Finite metric trees \sep strict negative type \sep generalized roundness}
\end{keyword}
\end{frontmatter}

\section{Introduction and Synopsis}
The study of trees as mathematical objects was initiated by Cayley \cite{CAY} who enumerated the isomers
of the saturated hydrocarbons $C_{n}H_{2n+2}$. For example, an application of Cayley's formula shows that
the number of isomers of the paraffin $C_{13}H_{28}$ is $802$. More recently,
mathematical studies of finite metric trees have proliferated due to myriad applications in areas
as diverse as evolutionary biology and theoretical computer science. Some examples of publications which
highlight this point include Weber \textit{et al}.\ \cite{WOS}, Ailon and Charikar \cite{AC},
Semple and Steel \cite{SS}, Fakcharoenphol \textit{et al}.\ \cite{FRT}, Charikar \textit{et al}.\ \cite{CCGGP},
and Bartal \cite{BART}.

Works such as those cited above illustrate two of the major themes of study pertaining to metric trees.
One is to try to reconstruct
metric trees from data such as DNA or protein sequences. This is the realm of so called phylogenetic
tree reconstruction or (more
generally) numerical taxonomy. The second major theme, driven by algorithmic considerations in computer science, is
to approximate finite metrics by (small numbers of) tree metrics. The importance of finite metric trees in this context
is that they are well suited to algorithms and can serve to help greatly reduce the computational hardness of
certain optimization problems.

In this paper we focus on one particular aspect of the non linear geometry of finite metric trees;
namely, strict $p$-negative type. (See Definition \ref{NTGRDEF}.)
The $p$-negative type inequalities arose classically in studies of isometric
embeddings and remain objects of intense research scrutiny in areas ranging from functional analysis to theoretical
computer science. The monographs of Wells and Williams \cite{WW}, and Deza and Laurent \cite{DL}, illustrate
a variety of classical and contemporary applications of inequalities of $p$-negative type.
See also the comments in Section $2$ of this paper.

Hjorth \textit{et al}.\ \cite{HLM} have shown that finite metric trees have strict $1$-negative type.
In this paper we determine that a new and substantially stronger family of inequalities
hold for finite metric trees. Namely, as we show in Theorems \ref{MAIN} and \ref{REFORM},
given a finite metric tree $(T,d)$, there is a maximal constant $\Gamma_{T} > 0$
so that for all natural numbers $n \geq 2$, all finite subsets
$\{x_{1}, \ldots , x_{n} \} \subseteq T$, and all choices of real numbers $\eta_{1}, \ldots , \eta_{n}$ with
$\eta_{1} + \cdots + \eta_{n} = 0$, we have:
\begin{eqnarray}\label{ONE}
\frac{\Gamma_{T}}{2} \cdot \Biggl( \sum\limits_{\ell=1}^{n} |\eta_{\ell}| \Biggl)^{2} +
\sum\limits_{1 \leq i,j \leq n} d(x_{i},x_{j}) \eta_{i} \eta_{j} & \leq & 0.
\end{eqnarray}
We call the maximal constant $\Gamma_{T}$
appearing in (\ref{ONE}) the $1$-negative type gap of $(T,d)$.
Theorem \ref{MAIN} includes a characterization of equality in the inequalities (\ref{ONE}).
Remark \ref{altchar} then indicates an alternative
and more direct characterization of equality in the inequalities (\ref{ONE}).
In Corollary \ref{GAPFORMULA} we compute a closed formula for the exact value of $\Gamma_{T}$ and thereby show that it
depends only upon the tree's unordered distribution of edge weights. Indeed,
\[
\Gamma_{T} = \biggl\{ \sum\limits_{(x,y) \in E(T)} d(x,y)^{-1} \biggl\}^{-1}
\]
where the sum is taken over the set of all (unordered) edges $e=(x,y)$ in $T$.

The inequalities (\ref{ONE}) are particularly strong. They imply, for example, that there is an $\zeta > 0$ so that the
finite metric tree $(T,d)$ has strict $p$-negative type for all $p \in (1-\zeta,1+\zeta)$. Moreover, due to the
universality of $\Gamma_{T}$, $\zeta$ can be chosen so that it depends only upon the tree's unordered distribution
of edge weights. This is done in Theorem \ref{LOWER}. So, in this context ($p=1$), the strict negative type of
finite metric trees is seen to persist on open intervals.
The same cannot be said of infinite metric trees as demonstrated by the infinite
necklace tree $(Y,d)$ which is described in Example \ref{STAR2} and Theorem \ref{STRICTMAX}. The necklace $(Y,d)$
has strict $1$-negative type but does not have $p$-negative type for any $p>1$. This example also shows that the
maximal $p$-negative type of a metric space can be strict. It is an open question whether
the maximal $p$-negative type of a finite metric space can be strict.

In this paper we adopt a vicarious (rather than direct) approach to $p$-negative type by
choosing to work with the equivalent notion of generalized roundness-$p$. (See Definition \ref{NTGRDEF}.)
We utilize this approach due to the fact that the geometric notion of generalized roundness-$p$
seems much more well suited to the analysis of highly symmetric objects (such as metric trees)
than the more analytic notion of $p$-negative type. The results of this paper validate this approach.

The remainder of this paper is structured as follows. Section $2$ discusses all relevant background
material on $p$-negative type and generalized roundness-$p$. The known equivalence
of these two notions is expressed in Theorem \ref{GRUNT}. This equivalence constitutes the primary theoretical
tool of the entire paper. Section $2$ also introduces the $p$-negative type gap $\Gamma_{X,p}$ of a metric
space $(X,d)$. Section $3$ develops some basic facts pertaining to finite metric trees $(T,d)$ and takes some
initial steps towards determining the maximal constant $\Gamma_{T}=\Gamma_{T,1}$ that appears in (\ref{ONE}).
Section $4$ completes this process, via Lagrange's (Multiplier) Theorem, and this leads into the derivations
of Theorems \ref{MAIN} and \ref{REFORM} (as discussed above). Section $4$ also introduces the ``generic algorithm''
(Definition \ref{GENALG}) which provides a means to characterize equality in the inequalities (\ref{ONE}) above.
Section $5$ develops applications of the inequalities (\ref{ONE}) such as Theorem \ref{LOWER} (which determines
lower bounds on the $p$-negative type of finite metric trees) and
Theorem \ref{STRICTMAX} (which gives properties of the infinite necklace $(Y,d)$).
Throughout this paper we use $\mathbb{N} = \{1, 2, 3, \ldots \}$ to denote the set of all natural numbers.
Whenever sums are indexed over the empty set we take them to be zero by default.

\section{Preliminaries on Negative Type and Generalized Roundness}
The notions of negative type and generalized roundness --- the formal definitions of which are given in Definition \ref{NTGRDEF} ---
were introduced and studied by Menger \cite{M} and Schoenberg \cite{S1}, \cite{S2}, and Enflo \cite{E2}, respectively.
In part, Schoenberg's studies were focussed on determining which metric spaces can be isometrically
embedded into a Hilbert space. This work was later generalized to the setting of $L_{p}$-spaces ($0 < p \leq 2$)
by Bretagnolle \textit{et al}.\ \cite{BDK} who obtained the following
celebrated characterization: a real (quasi) normed space is linearly isometric to a subspace of some $L_{p}$-space ($0 < p \leq 2$) if and
only if it has $p$-negative type. There are also results along these lines which deal with the less tractable (commutative) case
$p>2$, and with certain of the non commutative $L_{p}$-spaces. See, for example, the papers of Koldobsky and K\"{o}nig \cite{KK}, and Junge \cite{J},
respectively. General references on the interplay between $p$-negative type inequalities and isometric embeddings include Deza and Laurent \cite{DL},
and Wells and Williams \cite{WW}.

Enflo \cite{E2} was interested in a problem of Smirnov concerning uniform embeddings of metric spaces
into Hilbert spaces. A uniform embedding of one metric space into another is a uniformly continuous injection
whose inverse is also uniformly continuous. In other words, uniform embeddings are uniform homeomorphisms
onto their range. Smirnov had asked is every separable metric space uniformly homeomorphic to a subset
of a Hilbert space? In other words; is $L_{2}[0,1]$ a universal uniform embedding space? Enflo answered
Smirnov's question negatively by proving that universal uniform embedding spaces cannot have generalized
roundness-$p$ for any $p > 0$, and by showing that all Hilbert spaces necessarily have generalized roundness-$2$.
In fact, it follows from Enflo's proof that the Banach space of null sequences $c_{0}$ does not embed uniformly into any Hilbert space.
The ideas and constructions in Enflo \cite{E2} have proven extremely useful over time, not only within mainstream functional
analysis, but also in other important areas such as coarse geometry.
The recent monograph of Benyamini and Lindenstrauss \cite{BL}
gives an extensive account of the non linear classification of Banach spaces, including a chapter on uniform
embeddings into Hilbert spaces.

Instigating a theory that has turned out to have a number of uncanny parallels with that of uniform embeddings, Gromov \cite{G}
introduced the notion of coarse embeddings of metric spaces. Gromov \cite{G} asked if every separable metric space coarsely embeds
into a Hilbert space? Dranishnikov \textit{et al}.\ \cite{DGLY} gave a negative answer to Gromov's question by using ideas from
Enflo \cite{E2}. Yu \cite{Y} showed that every discrete metric space which
coarsely embeds into a Hilbert space satisfies the Coarse Baum-Connes Conjecture. Using the work of Dranishnikov \textit{et al}.\ \cite{DGLY}
as a starting point, Nowak \cite{N} developed a number of theoretical similarities between coarse embeddings and uniform embeddings.
Connections between generalized roundness, coarse embeddings and certain forms of the Baum-Connes Conjecture have also been
obtained by Lafont and Prassidis \cite{LP}.
Given the prominence of the Coarse Baum-Connes Conjecture to topologists (and to mathematicians in general), and the striking result
of Yu \cite{Y} (above), it is not surprising that a large number of papers have now been written on coarse embeddings. Unfortunately, many of
these papers use the term ``uniform embedding'' when they are really referring to coarse embeddings.
In addition to the source references mentioned above, the final chapters of the monograph of Roe \cite{R} provide
a good overview of recent work on asymptotic dimension and coarse embeddings into Hilbert spaces.

\begin{defn}\label{NTGRDEF} Let $p \geq 0$ and let $(X,d)$ be a metric space. Then:
\begin{enumerate}
\item[(a)] $(X,d)$ has $p$-{\textit{negative type}} if and only if for all natural numbers $n \geq 2$,
all finite subsets $\{x_{1}, \ldots , x_{n} \} \subseteq X$, and all choices of real numbers $\eta_{1},
\ldots, \eta_{n}$ with $\eta_{1} + \cdots + \eta_{n} = 0$, we have:

\begin{eqnarray}\label{NT}
\sum\limits_{1 \leq i,j \leq n} d(x_{i},x_{j})^{p} \eta_{i} \eta_{j} & \leq & 0.
\end{eqnarray}

\item[(b)] $(X,d)$ has \textit{strict} $p$-{\textit{negative type}} if and only if it has $p$-negative type
and the inequality in (a) is strict whenever the scalar $n$-tuple $(\eta_{1}, \ldots , \eta_{n}) \not= \vec{0}$.

\item[(c)] $(X,d)$ has \textit{generalized roundness}-$p$ if and only if for all natural numbers $n \in \mathbb{N}$,
and all choices of points $a_{1}, \ldots , a_{n}, b_{1}, \ldots , b_{n} \in X$, we have:

\begin{eqnarray}\label{GR}
\sum\limits_{1 \leq k < l \leq n} \left\{ d(a_{k},a_{l})^{p} + d(b_{k},b_{l})^{p} \right\} & \leq &
\sum\limits_{1 \leq j,i \leq n} d(a_{j},b_{i})^{p}.
\end{eqnarray}
\end{enumerate}
\end{defn}

\begin{rem}\label{newrem}
In making Definition 2.1 (c) it is important to point out that repetitions among the $a$'s and $b$'s are allowed.
Indeed, allowing repetitions is essential.
We may, however, when making Definition \ref{NTGRDEF} (c), assume that $a_{j} \not= b_{i}$ for all
$i,j$ ($1 \leq i,j \leq n$). This is due to an elementary cancellation of like terms phenomenon that was
first observed by Andrew Tonge (unpublished).
\end{rem}

Notice that if one restricts to $n=2$ in Definition 2.1 (c) then one gets the condition that Enflo \cite{E1} called
\textit{roundness}-$p$. Roundness-$p$ can be viewed as a direct precursor to the linear Banach space notion known as
Rademacher type. Since being distilled in the 1970s, the related notions of type, cotype and $K$-convexity have played
a very prominent r\^{o}le in the development of linear Banach space theory. See, for example, the survey paper of
Maurey \cite{BM}.
There are also non linear or metric notions of type and cotype due to Bourgain \textit{et al}.\ \cite{BMW}, and
Mendel and Naor \cite{MN}, respectively. In particular, Mendel and Naor \cite{MN} apply
metric cotype to completely settle the problem of classifying when $L_{p}$ embeds coarsely or uniformly into $L_{q}$.
There are also connections --- such as Theorem 2.3 in Lennard \textit{et al}.\ \cite{LTW2} --- between generalized roundness
and linear cotype. A number of open problems persist in this direction.

We should point out that our Definition 2.1 (c) is a cosmetic alteration of the
original definition given in Enflo \cite{E2}.
Enflo actually considered the supremum of all $p$'s that satisfy Definition 2.1 (c).
A result of Linial and Naor, which appears in the paper of Naor and Schechtman \cite{NS},
says that every metric tree has (maximal) roundness two.
The results of Section $5$ of this paper, which develop lower bounds on the $p$-negative type of
finite metric trees, can be thought of as a natural extension of the work of Linial and Naor.

Papers by Lennard \textit{et al}.\ \cite{LTW} and Weston \cite{W} have shown that Definitions 2.1 (a),
2.1 (c) and a third, closely related, condition are all equivalent.
These equivalences are given in Theorem \ref{GRUNT} (below) and, as they are quite central to the rest
of this paper, we include a brief proof for easy reference.
The following definition will help us to state the third condition of Theorem \ref{GRUNT}
succinctly and will moreover be important in its own right throughout the entire paper.

\begin{defn}\label{CUTIE}
Let $X$ be a set. Let $q,t$ be natural numbers.
\begin{enumerate}
\item[(a)] A $(q,t)$-\textit{simplex} in $X$ is a $(q+t)$-vector $(a_{1}, \ldots , a_{q}, b_{1}, \ldots ,b_{t}) \in X^{q+t}$
whose coordinates consist of $q+t$ distinct vertices $a_{1}, \ldots, a_{q}, b_{1}, \ldots , b_{t} \in X$. Such a
simplex will be denoted $D=[a_{j};b_{i}]_{q,t}$.

A vertex $x \in D$ is said to be of \textit{simplex parity} $a$ if $x=a_{j}$ for some $j$, $1 \leq j \leq q$. A vertex
$y \in D$ is said to be of \textit{simplex parity} $b$ if $y=b_{i}$ for some $i$, $1 \leq i \leq t$. Two distinct
vertices $x,y \in D$ are said to be of the \textit{same simplex parity} if they both have simplex parity $a$ or if they
both have simplex parity $b$. And, \textit{opposite simple parity} has the obvious meaning.

\item[(b)] A \textit{load vector} for a $(q,t)$-simplex $D=[a_{j};b_{i}]_{q,t}$ in $X$ is an arbitrary vector
$\vec{\omega} = (m_{1}, \ldots m_{q}, n_{1}, \ldots , n_{t}) \in \mathbb{R}^{q+t}_{+}$ that assigns a positive weight
$m_{j} > 0$ or $n_{i} > 0$ to each vertex $a_{j}$ or $b_{i}$ of $D$ ($1 \leq j \leq q$, $1 \leq i \leq t$), respectively.

\item[(c)] A \textit{loaded} $(q,t)$-\textit{simplex} in $X$ consists of a $(q,t)$-simplex $D=[a_{j};b_{i}]_{q,t}$ in $X$
together with a load vector $\vec{\omega} =(m_{1}, \ldots ,m_{q}, n_{1}, \ldots, n_{t})$ for $D$. Such a loaded simplex
will be denoted $D(\vec{\omega})$ or $[a_{j}(m_{j});b_{i}(n_{i})]_{q,t}$ as the need arises.

\item[(d)] A \textit{normalized} $(q,t)$-\textit{simplex} in $X$ is a loaded $(q,t)$-simplex $D(\vec{\omega})$ in $X$ whose
load vector $\vec{\omega}=(m_{1}, \ldots , m_{q}, n_{1}, \ldots , n_{t})$ satisfies the two normalizations:
\[
m_{1} + \cdots + m_{q} = 1 = n_{1} + \cdots n_{t}.
\]
Such a vector $\vec{\omega}$ will be called a \textit{normalized load vector} for $D$.
\end{enumerate}
\end{defn}

\begin{thm}[Lennard \textit{et al}.\ \cite{LTW}, Weston \cite{W}]\label{GRUNT}
Let $p \geq 0$. For a metric space $(X,d)$, the following conditions are equivalent:
\begin{enumerate}
\item[(a)] $(X,d)$ has $p$-negative type.

\item[(b)] $(X,d)$ has generalized roundness-$p$.

\item[(c)] For all $q,t \in \mathbb{N}$ and all normalized $(q,t)$-simplexes $D(\vec{\omega}) = [a_{j}(m_{j});b_{i}(n_{i})]_{q,t}$
in $X$ we have:
\begin{eqnarray}\label{TWO}
\sum\limits_{1 \leq j_{1} < j_{2} \leq q} m_{j_{1}}m_{j_{2}}d(a_{j_{1}},a_{j_{2}})^{p} +
\sum\limits_{1 \leq i_{1} < i_{2} \leq t} n_{i_{1}}n_{i_{2}}d(b_{i_{1}},b_{i_{2}})^{p} & ~ & \\
\leq \sum\limits_{j,i=1}^{q,t} m_{j}n_{i}d(a_{j},b_{i})^{p}. \hspace*{2.11in} & ~ & ~ \nonumber
\end{eqnarray}
\end{enumerate}
\end{thm}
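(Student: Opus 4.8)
The plan is to prove the cyclic chain of implications (a) $\Rightarrow$ (c) $\Rightarrow$ (b) $\Rightarrow$ (a). A single mechanism underlies all three steps: a dictionary between scalar tuples $\vec{\eta}=(\eta_{1},\ldots,\eta_{n})$ satisfying $\eta_{1}+\cdots+\eta_{n}=0$ and pairs of finite (possibly weighted) point configurations in $X$. One records the strictly positive entries of $\vec{\eta}$ against one family of points and the moduli of the strictly negative entries against a second, disjoint family; the constraint $\sum\eta_{\ell}=0$ is precisely what guarantees that the two families carry equal total weight, and this in turn is what the normalizations in Definition \ref{CUTIE}(d) and the ``$n$ points against $n$ points'' symmetry in Definition \ref{NTGRDEF}(c) encode.

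For (a) $\Rightarrow$ (c) I would take a normalized $(q,t)$-simplex $D(\vec{\omega})=[a_{j}(m_{j});b_{i}(n_{i})]_{q,t}$ in $X$ and apply the $p$-negative type inequality (\ref{NT}) to the $q+t$ distinct points $a_{1},\ldots,a_{q},b_{1},\ldots,b_{t}$ carrying the scalars $\vec{\eta}=(m_{1},\ldots,m_{q},-n_{1},\ldots,-n_{t})$. This is a legitimate choice because the two normalizations force $\sum_{j}m_{j}-\sum_{i}n_{i}=1-1=0$. Splitting the double sum in (\ref{NT}) according to whether the two running indices have the same simplex parity or opposite simplex parity, and discarding the diagonal contributions (which vanish since $d(x,x)=0$), turns (\ref{NT}), after division by $2$, into exactly (\ref{TWO}).

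For (c) $\Rightarrow$ (b), given points $a_{1},\ldots,a_{n},b_{1},\ldots,b_{n}\in X$ as in Definition \ref{NTGRDEF}(c), I would first use Remark \ref{newrem} to assume $a_{j}\neq b_{i}$ for all $i,j$, and then collapse repetitions: let $\hat{a}_{1},\ldots,\hat{a}_{q}$ be the distinct values occurring among the $a$'s with multiplicities $\mu_{1},\ldots,\mu_{q}$, and $\hat{b}_{1},\ldots,\hat{b}_{t}$ the distinct values among the $b$'s with multiplicities $\nu_{1},\ldots,\nu_{t}$. By the standing assumption these $q+t$ vertices are pairwise distinct, and $(\mu_{1}/n,\ldots,\mu_{q}/n,\nu_{1}/n,\ldots,\nu_{t}/n)$ is a normalized load vector for the associated $(q,t)$-simplex. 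Applying (\ref{TWO}) to this normalized simplex, multiplying through by $n^{2}$, and expanding each multiplicity (pairs of coincident copies again contributing nothing) recovers (\ref{GR}). For (b) $\Rightarrow$ (a) I would fix distinct points $x_{1},\ldots,x_{n}$ and reals $\eta_{1},\ldots,\eta_{n}$ with $\sum\eta_{\ell}=0$; since the left side of (\ref{NT}) is a continuous function of $\vec{\eta}$ and the rational zero-sum tuples are dense among the real ones, it is enough to treat rational $\vec{\eta}$, and then, after clearing a common denominator (which merely scales (\ref{NT}) by a positive constant), integer $\vec{\eta}$. Writing $\eta_{\ell}=\alpha_{\ell}-\beta_{\ell}$ with $\alpha_{\ell},\beta_{\ell}$ nonnegative integers, one has $\sum\alpha_{\ell}=\sum\beta_{\ell}$, so applying (\ref{GR}) to the configuration in which $x_{\ell}$ appears $\alpha_{\ell}$ times among the $a$'s and $\beta_{\ell}$ times among the $b$'s --- expanding multiplicities and cancelling diagonal terms one last time --- delivers $\sum_{1\leq i,j\leq n}d(x_{i},x_{j})^{p}\eta_{i}\eta_{j}\leq 0$ (the case $\vec{\eta}=\vec{0}$ being trivial).

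The step I expect to be the one genuinely non-combinatorial ingredient is the passage from rational to real $\vec{\eta}$ in (b) $\Rightarrow$ (a): conditions (b) and (c) only ever manufacture integer multiplicities, so the full strength of (a) for arbitrary real weights has to be recovered by a density-and-continuity argument that keeps the linear constraint $\sum\eta_{\ell}=0$ intact along the approximating sequence. Everything else is the same pair of routine manoeuvres carried out repeatedly --- split a zero-sum tuple into its positive and negative parts, and break a double sum into same-parity and opposite-parity blocks while throwing away the vanishing diagonal --- together with the observation that merging coincident points leaves the value of the relevant quadratic expression unchanged.
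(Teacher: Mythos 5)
Your proposal is correct and rests on exactly the same three mechanisms as the paper's sketch: the dictionary between zero-sum scalar tuples and pairs of weighted point families (the identity (\ref{n5})), the Tonge cancellation of Remark \ref{newrem}, and a density/continuity argument to pass from integer multiplicities to arbitrary real weights. The only difference is organizational --- you run a cycle (a)\,$\Rightarrow$\,(c)\,$\Rightarrow$\,(b)\,$\Rightarrow$\,(a) and place the density step in (b)\,$\Rightarrow$\,(a), whereas the paper establishes (a)\,$\Leftrightarrow$\,(c) via (\ref{n5}) and then closes (b) in, with density appearing in (b)\,$\Rightarrow$\,(c) --- but this is a cosmetic rearrangement of the same argument.
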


\begin{proof}[Sketch] The equivalence of conditions (a) and (c) is an easy consequence of the following
observation. Suppose $n \geq 2$ is a natural number. Let $\{ x_{1}, \ldots, x_{n} \} \subseteq X$,
and real numbers $\eta_{1}, \ldots, \eta_{n}$ (not all zero) such that $\eta_{1} + \cdots + \eta_{n}=0$,
be given. By relabeling (if necessary) we may assume there exist $q,t \in \mathbb{N}$ such that
$q+t=n$, $\eta_{1}, \ldots, \eta_{q} \geq 0$, and $\eta_{q+1}, \ldots, \eta_{q+t} < 0$.
Clearly $\sum_{j=1}^{q} \eta_{j} = - \sum_{k=q+1}^{n} \eta_{k}$.
We now make the following designations: For $1 \leq j \leq q$, set $a_{j} = x_{j}$ and $m_{j} = \eta_{j}$.
Further, if $j > q$, we nominally set $m_{j}=0$. For $1 \leq i \leq t$, set $b_{i}=x_{n-i+1}$ and
$n_{i} = - \eta_{n-i+1}$. Further, if $i > t$, we nominally set $n_{i} =0$. For all $k$, $1 \leq k \leq n$,
we then have $\eta_{k} = m_{k} - n_{k}$. More importantly, for any $p \geq 0$, we observe that:
\begin{eqnarray}\label{n5}
\sum\limits_{1 \leq i,j \leq n} d(x_{i},x_{j})^{p} \eta_{i} \eta_{j} & = &
\sum\limits_{1 \leq i,j \leq n} d(x_{i},x_{j})^{p} (m_{i} - n_{i})(m_{j} - n_{j}) \nonumber \\
& = & \sum\limits_{1 \leq j_{1},j_{2} \leq q} m_{j_{1}}m_{j_{2}} d(a_{j_{i}}, a_{j_{2}})^{p}
+ \sum\limits_{1 \leq i_{1},i_{2} \leq t} n_{i_{1}}n_{i_{2}} d(b_{i_{1}}, b_{i_{2}})^{p} \nonumber \\
& ~ & -2 \sum\limits_{j,i = 1}^{n} m_{j}n_{i} d(a_{j},b_{i})^{p}.
\end{eqnarray}
Clearly weights ($\eta_{k}, m_{j}$ or $n_{i}$) that are equal to zero,
and the vertices to which they correspond, play no r\^{o}le in the
determination of (\ref{n5}). Moreover, we may assume that
$\sum_{j=1}^{q} m_{j} = 1 = \sum_{i=1}^{t} n_{i}$ by a simple normalization. Further, the entire
process is clearly symmetric. One may instead start with a normalized $(q,t)$-simplex and simply
\textit{reverse} all of the above designations. The equivalence of conditions (a) and (c) is now plain.

Finally, condition (c) obviously implies condition (b). The converse follows from Remark \ref{newrem}
and a simple density/continuity argument.
\end{proof}

\begin{rem}\label{REMGR}
One advantage of working with condition (c) in Theorem \ref{GRUNT} is that it automatically excludes
the trivial cases of equality that are allowed to occur in the inequalities of conditions (a) and (b).
Hence Theorem \ref{GRUNT} (c) provides an alternate characterization of strict $p$-negative type
when $p > 0$. Namely:
A metric space $(X,d)$ has strict $p$-negative type if and only if the inequality (\ref{TWO})
is strict for each normalized $(q,t)$-simplex $D(\vec{\omega})=[a_{j}(m_{j});b_{i}(n_{i})]_{q,t}$ in $X$.
(This statement is a new theorem in its own right.
The proof is immediate from the equality (\ref{n5}) derived in the proof of Theorem \ref{GRUNT}.)
We will use this result frequently and with little further comment.
\end{rem}

Motivated by the above inequalities (\ref{TWO}) in the particular case $p=1$, we now introduce
two parameters $\gamma_{D}(\vec{\omega})$
and $\Gamma_{X}$ that are designed to ``quantify the degree of strictness'' of the (strict) $1$-negative type inequalities.
We will see that these ``gap'' parameters
are particularly meaningful in the context of finite metric spaces, and especially so for finite metric trees.
The two relevant definitions are as follows.

\begin{defn}\label{SGAP} Let $(X,d)$ be a metric space. Let $q,t$ be natural numbers. Let $D=[a_{j};b_{i}]_{q,t}$ be a
$(q,t)$-simplex in $X$. Let $N_{q,t} \subset \mathbb{R}^{q+t}_{+}$ denote the
set of all normalized load vectors $\vec{\omega}=
(m_{1}, \ldots , m_{q}, n_{1}, \ldots , n_{t})$ for $D$. Then, the $1$-\textit{negative type simplex gap} of $D$ is the
function $\gamma_{D} : N_{q,t} \rightarrow \mathbb{R} : \vec{\omega} \mapsto \gamma_{D}(\vec{\omega})$ where:
\begin{eqnarray*}
\gamma_{D}(\vec{\omega}) & = & \sum\limits_{j,i = 1}^{q,t} m_{j}n_{i}d(a_{j},b_{i}) \\
& ~ & - \sum\limits_{1 \leq j_{1} < j_{2} \leq q} m_{j_{1}}m_{j_{2}}d(a_{j_{1}},a_{j_{2}})
- \sum\limits_{1 \leq i_{1} < i_{2} \leq t} n_{i_{1}}n_{i_{2}}d(b_{i_{1}},b_{i_{2}}),
\end{eqnarray*}
for each $\vec{\omega}=(m_{1}, \ldots, m_{q}, n_{1}, \ldots, n_{t}) \in N_{q,t}$. If we further define the quantities
\begin{eqnarray*}
\mathfrak{R}_{D}(\vec{\omega}) & = & \sum\limits_{j,i =1}^{q,t} m_{j}n_{i}d(a_{j},b_{i}), \, {\rm{and}} \\
\mathfrak{L}_{D}(\vec{\omega}) & = & \sum\limits_{1 \leq j_{1} < j_{2} \leq q} m_{j_{1}}m_{j_{2}}d(a_{j_{1}},a_{j_{2}}) +
\sum\limits_{1 \leq i_{1} < i_{2} \leq t} n_{i_{1}}n_{i_{2}}d(b_{i_{1}},b_{i_{2}}),
\end{eqnarray*}
then we see that $\gamma_{D}(\vec{\omega}) = \mathfrak{R}_{D}(\vec{\omega}) - \mathfrak{L}_{D}(\vec{\omega})$ is the right
hand side of the generalized roundness-$1$
inequality (\ref{TWO}) for the normalized $(q,t)$-simplex $D(\vec{\omega})$ in $X$
subtract the left hand side of the same inequality. So, in particular, $(X,d)$ has strict $1$-negative type if and only
if $\gamma_{D}(\vec{\omega}) > 0$ for each normalized $(q,t)$-simplex $D(\vec{\omega})$ in $X$.
\end{defn}

\begin{defn}\label{NGAP}
Let $(X,d)$ be a metric space with $1$-negative type. We define the $1$-\textit{negative type gap} of $(X,d)$
to be the non negative quantity $$\Gamma_{X} = \inf\limits_{D(\vec{\omega})} \gamma_{D}(\vec{\omega})$$ where
the infimum is taken over all normalized $(q,t)$-simplexes $D(\vec{\omega})$ in $X$.
\end{defn}

Notice that if the $1$-negative type gap $\Gamma_{X} > 0$, then $(X,d)$ has strict $1$-negative type.
Example \ref{STAR2} (given in Section $5$) will show that the converse is not true in general.
In other words, there exist metric spaces $(X,d)$ with strict $1$-negative type and with $\Gamma_{X} = 0$.

\begin{rem}\label{NGAPREM}
More generally, and in the obvious way (again based on (\ref{TWO})), we can define the $p$-\textit{negative
type simplex gap} $\gamma_{D,p} : N_{q,t} \rightarrow \mathbb{R}$ and the resulting $p$-\textit{negative type
gap} $\Gamma_{X,p} = \inf\limits \gamma_{D,p}(\vec{\omega})$ for any metric space $(X,d)$
and any $p \geq 0$. (So that $\gamma_{D} = \gamma_{D,1}$ and $\Gamma_{X} = \Gamma_{X,1}$.) However, for the most
part, our primary interest is the case $p=1$.
\end{rem}

We will see in Section $5$  that if the $1$-negative type gap $\Gamma_{X}$ of a finite metric space $(X,d)$ is positive,
then there must exist a constant $\zeta > 0$ such that $(X,d)$ has strict $p$-negative type for all $p \in (1-\zeta,1+\zeta)$.
This is done in Theorem \ref{LOWERBOUNDS}. The proof of this theorem is independent of the following two sections and the interested reader
may therefore choose to cut ahead and read it now. Moreover, Theorem \ref{LOWERBOUNDS}, which pertains to the case $p=1$,
actually holds for any $p > 0$ provided $\Gamma_{X} > 0$ is replaced by $\Gamma_{X,p} > 0$, and so on. We will return to this
point in Section $5$.

\section{Determining the Simplex Gap of a Finite Metric Tree}
Hjorth \textit{et al}.\ \cite{HLM} have shown that finite metric trees have strict $1$-negative type.
In relation to Definition \ref{NGAP} it therefore makes sense to ask if we can compute the
$1$-negative type gap $\Gamma_{T}$ of an arbitrary finite metric tree $(T,d)$?
The main purpose of these next two sections is to definitively answer this question
\textit{positively}. Our culminating result in this direction is Corollary \ref{GAPFORMULA}.

Our point of entry for the above question will be to develop a key formula for the simplex gap
evaluation $\gamma_{D}(\vec{\omega})$
of a normalized $(q,t)$-simplex $D(\vec{\omega})$ in a finite metric tree $(T,d)$.
This is done in Theorem \ref{FORMULA} and it
will eventually allow the exact computation of the $1$-negative type gap $\Gamma_{T} = \inf \gamma_{D}$ of $(T,d)$.
Prior to doing this, however, it is highly germane to review some basic facts and standard notations
pertaining to finite metric trees. We will also introduce some concepts and notations that are less standard.

\begin{defn}
A finite metric tree is a finite connected graph $T$ that has no cycles,
endowed with an edge weighted path metric $d$.
Terminal vertices in $T$ are called \textit{leaves} or \textit{pendants}.
Given vertices $x,y \in T$, the unique shortest path from $x$ to $y$ is called a \textit{geodesic} and
is denoted $[x,y]$. In particular, the pair $e=(x,y)$ is an edge in $T$ if and only if the
geodesic $[x,y]$ from $x$ to $y$ contains no other vertices of $T$. If an edge $e$ lies on a geodesic $[x,y]$,
we may sometimes write $e \subseteq [x,y]$.
\end{defn}

\begin{notation} Given an edge $e=(x,y)$ in a finite metric tree $(T,d)$ we will often find it convenient
use the notation $|e| = d(x,y)$ to denote the metric length of the edge.
\end{notation}

\begin{defn}
Let $(T,d)$ be a finite metric tree.
\begin{enumerate}
\item[(a)] If $|e|=1$ for all edges $e=(x,y)$ in $(T,d)$ we will say that the path metric $d$ is \textit{ordinary}
or \textit{unweighted}.

\item[(b)] More generally; if $|e| \not= 1$ for at least one edge $e=(x,y)$ in $(T,d)$, we will say that the path
metric $d$ is \textit{edge weighted}.
\end{enumerate}
\end{defn}

\begin{defn}
Given a finite metric tree $(T,d)$ and a set of vertices $V \subseteq T$ we can form the smallest subtree of $T$
that contains all the vertices of $V$ --- denoted by $T_{V}$ --- and we can endow it with the natural restriction
of the metric $d$. We will call $(T_{V},d)$ the \textit{minimal subtree} of $(T,d)$ generated by the set of vertices $V$.
Clearly: if $V=\{v_{1}, \ldots ,v_{k} \} \subseteq T$ then the minimal subtree $T_{V}$ consists of all vertices $x \in T$
that lie on some geodesic $[v_{i},v_{j}]$ in $T$. Of course, the minimal subtree $(T_{V},d)$ is a finite metric tree
in its own right. Given a subset $V \subseteq T$ it is also clear that $T_{V}=T$ if and only if $V$ contains all
the leaves of $T$.
\end{defn}

The following definition introduces a convention to ``orient'' the edges in any given tree.
This will enable the treatment of edges as ordered pairs in a systematic and unambiguous way.
Orientation will play a key r\^{o}le in determining the main results of this paper.

\begin{defn}\label{CROSS}
Let $(T,d)$ be a finite metric tree. By way of convention, we choose and then highlight a fixed leaf $\ell \in T$.
This distinguished leaf $\ell$ is then called the \textit{root} of $T$. Once the root has been
fixed we may make the following definitions.
\begin{enumerate}
\item[(a)] An edge $e=(x,y)$ in $T$ is
(\textit{left/right}) \textit{oriented} if $d(x, \ell) > d(y, \ell)$. In other words, an oriented edge in $T$
is an ordered pair $e=(x,y)$ of adjacent vertices $x,y \in T$ where $x$ is geodesically further from the
root $\ell$ than $y$. The set of all such oriented edges $e$ in $T$ will be denoted $E(T)$.

\item[(b)] A vertex $v \in T$ is to the \textit{left} of an oriented edge $e =(x,y) \in E(T)$ if $d(v,x) < d(v,y)$.
If it is also the case that $v \not= x$ then we will say that $v$ is \textit{strictly to the left} of $e$.
The set of all vertices $v \in T$ that are to the left of $e$ will be denoted $L(e)$. And the set of all
vertices $v \in T$ that are strictly to the left of $e$ will be denoted $\overline{L}(e)$. Notice that we
always have $x \in L(e)$ but it can happen that $\overline{L}(e) = \emptyset$. Alternately, we may think
of $L(e)$ as the vertices of the subtree that is rooted at $x$ (oriented as per $T$).

\item[(c)] A vertex $v \in T$ is to the \textit{right} of an oriented edge $e=(x,y) \in E(T)$ if $d(v,y) < d(v,x)$.
If it is also the case that $v \not= y$ then we will say that $v$ is \textit{strictly to the right of} $e$.
The set of all vertices $v \in T$ that are to the right of $e$ will be denoted $R(e)$. And the set of all
vertices $v \in T$ that are strictly to the right of $e$ will be denoted $\overline{R}(e)$.
\end{enumerate}
Notice that each oriented edge $e \in E(T)$ partitions the vertices of $T$ into a disjoint union $L(e) \cup R(e)$.
\end{defn}

Henceforth, whenever we are referring to a particular finite metric tree, it will be understood that a root
leaf has been chosen from the outset. So ``edges'' are now always ordered pairs $e=(x,y)$ with the left vertex
$x$ as the first coordinate and the right vertex $y$ as the second coordinate. In particular, orientation affords
the following compact notation.

\begin{notation}
Given an oriented edge $e=(x,y)$ in a finite metric tree $(T,d)$ we may use its unique left vertex $x$ to
alternately denote the edge as $e(x)$. Note that, under this scheme, $e(\ell)$ is not defined because the
root leaf $\ell$ is not the left vertex of any oriented edge. All other vertices in $T$ appear
(uniquely) as the left vertex of some oriented edge.
\end{notation}

\begin{defn}\label{PARTSUM}
Let $D=[a_{j};b_{i}]_{q,t}$ be a fixed $(q,t)$-simplex in a finite metric tree $(T,d)$. Let $T_{D}$ be the
minimal subtree of $T$ generated by the vertices $a_{j},b_{i}$ of $D$. Orient the edges of $T_{D}$ by fixing
a root leaf $\ell \in T_{D}$. For each oriented edge $e \in E(T_{D})$ and each load vector
$\vec{\omega} = (m_{1}, \ldots, m_{q}, n_{1}, \ldots , n_{t}) \in \mathbb{R}_{+}^{q+t}$ for $D$, we define
the following \textit{partition sums of} $\vec{\omega}$:
\begin{enumerate}
\item[(a)] $\alpha_{L}(\vec{\omega},e) = \sum\limits_{j \in A_{L}(e)} m_{j}$ where $A_{L}(e) = \{ j \in [q] : a_{j} \in L(e) \}$.

\item[(b)] $\alpha_{R}(\vec{\omega},e) = \sum\limits_{j \in A_{R}(e)} m_{j}$ where $A_{R}(e) = \{ j \in [q] : a_{j} \in R(e) \}$.

\item[(c)] $\beta_{L}(\vec{\omega},e)  = \sum\limits_{i \in B_{L}(e)} n_{i}$ where $B_{L}(e) = \{ i \in [t] : b_{i} \in L(e) \}$.

\item[(d)] $\beta_{R}(\vec{\omega},e)  = \sum\limits_{i \in B_{R}(e)} n_{i}$ where $B_{R}(e) = \{ i \in [t] : b_{i} \in R(e) \}$.
\end{enumerate}

If, in the above definitions, we replace $L(e)$ and $R(e)$ with $\overline{L}(e)$ and $\overline{R}(e)$ (respectively), then we
obtain the \textit{strict partition sums of} $\vec{\omega}$: $\overline{\alpha}_{L}(\vec{\omega},e)$, $\overline{\alpha}_{R}(\vec{\omega},e)$,
$\overline{\beta}_{L}(\vec{\omega},e)$ and $\overline{\beta}_{R}(\vec{\omega},e)$. For example:
\begin{enumerate}
\item[(e)] $\overline{\alpha}_{L}(\vec{\omega},e) = \sum \{ m_{j} : a_{j} \in \overline{L}(e) \}$.

\item[(f)] $\overline{\beta}_{L}(\vec{\omega},e)  = \sum \{ n_{i} : b_{i} \in \overline{L}(e) \}$.
\end{enumerate}

Notice that if the load vector $\vec{\omega}$ is normalized, then we obtain the innocuous looking (but important) identities
$\alpha_{L}(\vec{\omega},e) + \alpha_{R}(\vec{\omega},e) = 1 = \beta_{L}(\vec{\omega},e) + \beta_{R}(\vec{\omega},e)$.
\end{defn}

\begin{notation}
In relation to Definition \ref{PARTSUM}, if we want to emphasize the (fixed) underlying $(q,t)$-simplex $D$, we may sometimes
write $\alpha_{L}(D,\vec{\omega},e)$ in place of $\alpha_{L}(\vec{\omega},e)$, and so on. (See, for example, Lemma \ref{REDUCE}.)
\end{notation}

\begin{thm}\label{FORMULA}
Let $D=[a_{j};b_{i}]_{q,t}$ be a given $(q,t)$-simplex in a finite metric tree $(T,d)$.
Let $T_{D}$ denote the minimal subtree of $T$ generated by the vertices of $D$. Let $N_{q,t} \subset \mathbb{R}_{+}^{q+t}$
denote the set of all normalized load vectors for $D$. Then, for each such normalized load vector
$\vec{\omega} =(m_{1}, \ldots, m_{q}, n_{1}, \ldots, n_{t}) \in N_{q,t}$, the simplex gap evaluation $\gamma_{D}(\vec{\omega})$ is
given by the following formulas:
\begin{eqnarray*}
\gamma_{D}(\vec{\omega}) & = & \sum\limits_{e \in E(T_{D})} (\alpha_{L}(\vec{\omega},e)-\beta_{L}(\vec{\omega},e))^{2} \cdot |e| \\
                         & = & \sum\limits_{e \in E(T_{D})} (\alpha_{R}(\vec{\omega},e)-\beta_{R}(\vec{\omega},e))^{2} \cdot |e|.
\end{eqnarray*}
In particular it follows that the simplex gap functions $\gamma_{D} : N_{q,t} \rightarrow \mathbb{R}$ are positive
valued for all possible $(q,t)$-simplexes $D \subseteq T$.
\end{thm}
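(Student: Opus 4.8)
The plan is to expand $\gamma_{D}(\vec{\omega}) = \mathfrak{R}_{D}(\vec{\omega}) - \mathfrak{L}_{D}(\vec{\omega})$ (Definition \ref{SGAP}) by rewriting every tree distance as a sum of edge lengths, and then regrouping the resulting finite double sums one oriented edge at a time. The starting point is the additivity of the path metric on $T_{D}$: for vertices $x,y$ of $T_{D}$ the geodesic $[x,y]$ uses an edge $e \in E(T_{D})$ precisely when $x$ and $y$ lie in opposite classes of the bipartition $L(e) \cup R(e)$ of the vertex set of $T_{D}$ (Definition \ref{CROSS}). Hence, writing $\chi_{L(e)}$ for the $\{0,1\}$-valued indicator of $L(e)$ on the vertices of $T_{D}$, we have $d(x,y) = \sum_{e \in E(T_{D})} |e| \cdot |\chi_{L(e)}(x) - \chi_{L(e)}(y)| = \sum_{e \in E(T_{D})} |e| \cdot (\chi_{L(e)}(x) - \chi_{L(e)}(y))^{2}$, the final equality holding because $\chi_{L(e)}$ takes values in $\{0,1\}$.

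Next I would substitute this identity into the three sums defining $\gamma_{D}(\vec{\omega})$ and interchange the (finite) orders of summation, which gives $\gamma_{D}(\vec{\omega}) = \sum_{e \in E(T_{D})} |e| \cdot \Delta_{e}$, where $\Delta_{e}$ is obtained from the formula for $\gamma_{D}(\vec{\omega})$ by replacing each of $d(a_{j},b_{i})$, $d(a_{j_{1}},a_{j_{2}})$, $d(b_{i_{1}},b_{i_{2}})$ by the square of the corresponding indicator difference. Fix $e$ and abbreviate $x_{j} = \chi_{L(e)}(a_{j})$, $y_{i} = \chi_{L(e)}(b_{i})$, all in $\{0,1\}$; then $\alpha_{L}(\vec{\omega},e) = \sum_{j} m_{j}x_{j}$ and $\beta_{L}(\vec{\omega},e) = \sum_{i} n_{i}y_{i}$, and the normalizations $\sum_{j} m_{j} = \sum_{i} n_{i} = 1$ yield $\alpha_{R}(\vec{\omega},e) = 1 - \alpha_{L}(\vec{\omega},e)$ and $\beta_{R}(\vec{\omega},e) = 1 - \beta_{L}(\vec{\omega},e)$. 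Expanding each $(x_{j}-y_{i})^{2} = x_{j} + y_{i} - 2x_{j}y_{i}$ (and similarly for the $a$- and $b$-terms) and using the normalizations to collapse the single-index marginals, one computes $\sum_{j,i} m_{j}n_{i}(x_{j}-y_{i})^{2} = \alpha_{L} + \beta_{L} - 2\alpha_{L}\beta_{L}$, $\sum_{j_{1}<j_{2}} m_{j_{1}}m_{j_{2}}(x_{j_{1}}-x_{j_{2}})^{2} = \alpha_{L}(1-\alpha_{L})$ and $\sum_{i_{1}<i_{2}} n_{i_{1}}n_{i_{2}}(y_{i_{1}}-y_{i_{2}})^{2} = \beta_{L}(1-\beta_{L})$, whence $\Delta_{e} = \alpha_{L} + \beta_{L} - 2\alpha_{L}\beta_{L} - \alpha_{L}(1-\alpha_{L}) - \beta_{L}(1-\beta_{L}) = (\alpha_{L}(\vec{\omega},e) - \beta_{L}(\vec{\omega},e))^{2}$; this is a short, purely algebraic computation. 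Since $\alpha_{R}(\vec{\omega},e) - \beta_{R}(\vec{\omega},e) = -(\alpha_{L}(\vec{\omega},e) - \beta_{L}(\vec{\omega},e))$, the same value equals $(\alpha_{R}(\vec{\omega},e) - \beta_{R}(\vec{\omega},e))^{2}$, which gives both displayed formulas for $\gamma_{D}(\vec{\omega})$.

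For the positivity claim, the formula already shows $\gamma_{D}(\vec{\omega}) \geq 0$, since every $|e| > 0$, with equality only if $\alpha_{L}(\vec{\omega},e) = \beta_{L}(\vec{\omega},e)$ for all $e \in E(T_{D})$. To rule this out, note that $q,t \geq 1$ forces $a_{1} \neq b_{1}$, so $T_{D}$ has at least two vertices, hence at least two leaves; moreover every leaf of $T_{D}$ must be one of the generating vertices $a_{j}$ or $b_{i}$ (deleting a non-generating leaf would give a strictly smaller subtree still containing all generators, contradicting minimality of $T_{D}$). Take the root $\ell$ to be one leaf of $T_{D}$ and let $v$ be another leaf; then $v$ is the left vertex of a unique edge $e = e(v) \in E(T_{D})$ and $L(e) = \{v\}$. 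Since the $a$'s and $b$'s are pairwise distinct and $v$ is one of them, exactly one of $\alpha_{L}(\vec{\omega},e)$, $\beta_{L}(\vec{\omega},e)$ equals a strictly positive load and the other equals $0$; therefore $\gamma_{D}(\vec{\omega}) \geq |e| \cdot (\alpha_{L}(\vec{\omega},e) - \beta_{L}(\vec{\omega},e))^{2} > 0$.

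The only step that demands genuine care is the per-edge evaluation $\Delta_{e} = (\alpha_{L} - \beta_{L})^{2}$: one must keep track of the diagonal versus off-diagonal contributions and be precise about exactly which marginal sums the normalizations are entitled to collapse. Everything else is a direct consequence of the additivity of the tree metric along geodesics and the fact that each oriented edge bipartitions the vertex set of $T_{D}$.
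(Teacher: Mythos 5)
Your proof is correct and takes essentially the same approach as the paper: decompose each tree distance along the edges of $T_{D}$, regroup the double sums one oriented edge at a time, and use the normalization identities $\alpha_{L}+\alpha_{R}=1=\beta_{L}+\beta_{R}$ to reduce the per-edge coefficient to $(\alpha_{L}-\beta_{L})^{2}$. The only cosmetic difference is that you carry out the edge decomposition via the indicator $\chi_{L(e)}$ and its squared differences, whereas the paper phrases it as ``the edge $e$ lies on the geodesic $[x,y]$''; these are the same combinatorial fact, and your positivity argument (an edge incident to a leaf of $T_{D}$ has $\alpha_{L}\neq\beta_{L}$ since every leaf of $T_{D}$ is a generator) is a slightly more explicit version of the paper's final remark.
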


\begin{proof} Fix a normalized load vector $\vec{\omega} =(m_{1}, \ldots, m_{q}, n_{1}, \ldots, n_{t})$ for the given
$(q,t)$-simplex $D=[a_{j};b_{i}]_{q,t}$. The idea of the proof is to calculate the contribution of each oriented
edge $e \in E(T_{D})$ to the simplex gap evaluation $\gamma_{D}(\vec{\omega})$, and then to sum over all such oriented edges.

As per Definition \ref{SGAP}, $\gamma_{D}(\vec{\omega}) = \mathfrak{R}_{D}(\vec{\omega}) - \mathfrak{L}_{D}(\vec{\omega})$, where
\begin{eqnarray*}
\mathfrak{L}_{D}(\vec{\omega}) & = & \sum\limits_{1 \leq j_{1} < j_{2} \leq q} m_{j_{1}}m_{j_{2}}d(a_{j_{1}},a_{j_{2}})
+ \sum\limits_{1 \leq i_{1} < i_{2} \leq t} n_{i_{1}}n_{i_{2}}d(b_{i_{1}},b_{i_{2}}), \, {{\rm and}} \\
\mathfrak{R}_{D}(\vec{\omega}) & = & \sum\limits_{j,i = 1}^{q,t} m_{j}n_{i}d(a_{j},b_{i}).
\end{eqnarray*}

Notice that if $[x,y]$ is a geodesic in the minimal subtree $T_{D}$, then:
\begin{eqnarray}\label{GEODESIC}
d(x,y) & = & \sum \bigl\{ |f| : f \in E(T_{D}) \,\text{and}\, f \subseteq [x,y] \bigl\}.
\end{eqnarray}
This is because $(T_{D},d)$ is a metric tree. Due to the geodesic decompositions (\ref{GEODESIC})
we may therefore rewrite the sums $\mathfrak{L}_{D}(\vec{\omega})$ and $\mathfrak{R}_{D}(\vec{\omega})$ as
\[
\mathfrak{L}_{D}(\vec{\omega}) = \sum\limits_{e \in E(T_{D})} \mathfrak{L}_{D}^{(e)}(\vec{\omega})
\cdot |e|, \,\,{\rm and}\,\,
\mathfrak{R}_{D}(\vec{\omega}) = \sum\limits_{e \in E(T_{D})} \mathfrak{R}_{D}^{(e)}(\vec{\omega}) \cdot |e|,
\]
where the coefficients $\mathfrak{L}_{D}^{(e)}(\vec{\omega})$ and
$\mathfrak{R}_{D}^{(e)}(\vec{\omega})$ are yet to be determined.

Now consider a fixed oriented edge $e \in E(T_{D})$. Notice that if the edge $e$
lies on the geodesic $[a_{j_{1}},a_{j_{2}}]$ then
the term $m_{j_{1}}m_{j_{2}} \cdot |e|$ appears in the sum $\mathfrak{L}_{D}(\vec{\omega})$ (and so on). For this to happen,
$a_{j_{1}}$ must be to the left of $e$ (that is, $j_{1} \in A_{L}(e)$) and $a_{j_{2}}$ must be to the right of $e$
(that is, $j_{2} \in A_{R}(e)$) or, \textit{vice versa}. This and similar such comments, together with
the definitions of $\mathfrak{L}_{D}(\vec{\omega})$ and $\mathfrak{R}_{D}(\omega)$, imply:
\begin{eqnarray*}
\mathfrak{L}_{D}^{(e)}(\vec{\omega}) & = &
\left( \sum\limits_{j_{1} \in A_{L}(e) } m_{j_{1}} \right) \left( \sum\limits_{j_{2} \in A_{R}(e) } m_{j_{2}} \right) +
\left( \sum\limits_{i_{1} \in B_{L}(e) } n_{i_{1}} \right) \left( \sum\limits_{i_{2} \in B_{R}(e) } n_{i_{2}} \right) \\
         & = &
\alpha_{L}(\vec{\omega},e) \cdot \alpha_{R}(\vec{\omega},e) + \beta_{L}(\vec{\omega},e) \cdot \beta_{R}(\vec{\omega},e) \\
         & = &
\alpha_{L}(\vec{\omega},e) \cdot (1 - \alpha_{L}(\vec{\omega},e)) + \beta_{L}(\vec{\omega},e) \cdot (1 - \beta_{L}(\vec{\omega},e)), \, {{\rm and}} \\
\mathfrak{R}_{D}^{(e)}(\omega) & = &
\left( \sum\limits_{j \in A_{L}(e) } m_{j} \right) \left( \sum\limits_{i \in B_{R}(e) } n_{i} \right) +
\left( \sum\limits_{j \in A_{R}(e) } m_{j} \right) \left( \sum\limits_{i \in B_{L}(e) } n_{i} \right) \\
         & = &
\alpha_{L}(\vec{\omega},e) \cdot \beta_{R}(\vec{\omega},e) + \alpha_{R}(\vec{\omega},e) \cdot \beta_{L}(\vec{\omega},e) \\
         & = &
\alpha_{L}(\vec{\omega},e) \cdot (1 - \beta_{L}(\vec{\omega},e)) + (1 - \alpha_{L}(\vec{\omega},e)) \cdot \beta_{L}(\vec{\omega},e).
\end{eqnarray*}

We can now define $\gamma_{D}^{(e)}(\vec{\omega})$, the contribution of the oriented edge $e \in E(T_{D})$
to the simplex gap evaluation $\gamma_{D}(\vec{\omega})$, in a natural and obvious way:
$$\gamma_{D}^{(e)}(\vec{\omega}) = \left(\mathfrak{R}_{D}^{(e)}(\vec{\omega})
- \mathfrak{L}_{D}^{(e)}(\vec{\omega})\right) \cdot |e|.$$
As a result we get the following simplex gap decomposition automatically:
\[
\gamma_{D}(\vec{\omega}) = \mathfrak{R}_{D}(\vec{\omega}) - \mathfrak{L}_{D}(\vec{\omega}) = \sum\limits_{e \in E(T_{D})} \gamma_{D}^{(e)}(\vec{\omega}).
\]

Setting $\alpha = \alpha_{L}(\vec{\omega},e)$ and $\beta = \beta_{L}(\vec{\omega},e)$ we see, from the preceding computations, that:
\begin{eqnarray*}
\gamma_{D}^{(e)}(\vec{\omega}) & = & \left(\mathfrak{R}_{D}^{(e)}(\vec{\omega}) - \mathfrak{L}_{D}^{(e)}(\vec{\omega})\right) \cdot |e| \\
                               & = & (\alpha \cdot (1 - \beta) + (1 - \alpha) \cdot \beta - \alpha \cdot (1 - \alpha) -
                                     \beta \cdot (1 - \beta)) \cdot |e| \\
                               & = & (\alpha^{2} - 2 \alpha \beta + \beta^{2}) \cdot |e| \\
                               & = & (\alpha - \beta)^{2} \cdot |e|\\
                               & = & (\alpha_{L}(\vec{\omega},e) - \beta_{L}(\vec{\omega},e))^{2} \cdot |e| \\
                               & = & (\alpha_{R}(\vec{\omega},e) - \beta_{R}(\vec{\omega},e))^{2} \cdot |e|.
\end{eqnarray*}
Now sum $\gamma_{D}^{(e)}(\vec{\omega})$ over all $e \in E(T_{D})$ to get the stated formulas for $\gamma_{D}(\vec{\omega})$.

If either vertex of an oriented edge $e$ is a leaf in the minimal subtree $T_{D}$, then clearly $\gamma_{D}^{(e)}(\vec{\omega}) > 0$
and hence the simplex gap $\gamma_{D}(\vec{\omega}) > 0$, establishing the final statement of the theorem.
\end{proof}

\begin{notation}
As introduced in the proof of Theorem \ref{FORMULA}, given a normalized $(q,t)$-simplex $D(\vec{\omega})$ in a finite metric tree
$(T,d)$, we will continue to use the notation $\gamma_{D}^{(e)}(\vec{\omega})$ to denote the contribution of an oriented edge $e \in E(T_{D})$
to the simplex gap evaluation $\gamma_{D}(\vec{\omega})$. So, according to Theorem \ref{FORMULA}, we
have the following formulas:
\begin{enumerate}
\item[(a)] $\gamma_{D}^{(e)}(\vec{\omega}) = \left( \alpha_{L}(\vec{\omega},e) - \beta_{L}(\vec{\omega},e) \right)^{2} \cdot |e|
= \left( \alpha_{R}(\vec{\omega},e) - \beta_{R}(\vec{\omega},e) \right)^{2} \cdot |e|$ for each oriented edge $e \in E(T_{D})$, and

\item[(b)] $\gamma_{D}(\vec{\omega}) = \sum\limits_{e \in E(T_{D})} \gamma_{D}^{(e)}(\vec{\omega})$.
\end{enumerate}
\end{notation}

Remark \ref{REMGR}, Definition \ref{SGAP} and Theorem \ref{FORMULA} automatically furnish a new and elementary proof of the
following result of Hjorth \textit{et al}.\ \cite{HLM}.

\begin{cor}\label{HJORTH}
Every finite metric tree has strict $1$-negative type.
\end{cor}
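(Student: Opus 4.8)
The plan is to read the conclusion straight off the machinery already assembled, with essentially no new work. By Remark \ref{REMGR}, a metric space has strict $1$-negative type precisely when the generalized roundness-$1$ inequality (\ref{TWO}) is strict for every normalized $(q,t)$-simplex in the space; and by Definition \ref{SGAP} this is exactly the assertion that the simplex gap $\gamma_{D}(\vec{\omega})$ is strictly positive for every normalized $(q,t)$-simplex $D(\vec{\omega})$. So the whole task reduces to verifying strict positivity of the simplex gap in an arbitrary finite metric tree $(T,d)$, which is precisely the content of the final clause of Theorem \ref{FORMULA}.

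The one substantive step is therefore to invoke the closed formula
\[
\gamma_{D}(\vec{\omega}) = \sum\limits_{e \in E(T_{D})} \bigl(\alpha_{L}(\vec{\omega},e) - \beta_{L}(\vec{\omega},e)\bigr)^{2} \cdot |e|
\]
supplied by Theorem \ref{FORMULA}. Each summand is nonnegative, being a square times the positive edge length $|e|$, so it suffices to exhibit one oriented edge $e \in E(T_{D})$ for which the bracketed difference does not vanish. Here I would argue as follows: pick any leaf $x$ of the minimal subtree $T_{D}$ other than the chosen root $\ell$ (such a leaf exists since $T_{D}$ has at least two leaves). Because $T_{D}$ is generated by the vertices of $D$, this leaf is itself a vertex of $D$, hence either $x = a_{j}$ for some $j$ or $x = b_{i}$ for some $i$, and not both, the coordinates of a simplex being distinct. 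Take the oriented edge $e = e(x)$ having $x$ as its left vertex; since $x$ is a leaf of $T_{D}$ one has $L(e) = \{x\}$, so exactly one of $\alpha_{L}(\vec{\omega},e)$, $\beta_{L}(\vec{\omega},e)$ equals the strictly positive load attached to $x$ while the other is $0$. Hence $\gamma_{D}^{(e)}(\vec{\omega}) > 0$, and therefore $\gamma_{D}(\vec{\omega}) > 0$.

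Since $D(\vec{\omega})$ was an arbitrary normalized $(q,t)$-simplex in $T$, the inequality (\ref{TWO}) with $p = 1$ is strict in all cases, and the equivalence recorded in Remark \ref{REMGR} then yields that $(T,d)$ has strict $1$-negative type. I do not expect any genuine obstacle once Theorem \ref{FORMULA} is available; the only point needing a moment's care is the bookkeeping in the last step, namely that a minimal subtree always possesses a leaf distinct from its root and that this leaf carries a strictly positive weight, but both facts are immediate from the definition of $T_{D}$, the orientation convention in Definition \ref{CROSS}, and the definition of a (normalized) load vector.
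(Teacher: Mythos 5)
Your argument is exactly the paper's: Corollary \ref{HJORTH} is stated as an automatic consequence of Remark \ref{REMGR}, Definition \ref{SGAP}, and the positivity clause of Theorem \ref{FORMULA}, and the leaf-edge argument you spell out is precisely the one appearing at the end of the proof of Theorem \ref{FORMULA} (``If either vertex of an oriented edge $e$ is a leaf in the minimal subtree $T_{D}$, then clearly $\gamma_{D}^{(e)}(\vec{\omega}) > 0$''). No gaps; this matches the paper's proof.
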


In addition to finite metric trees, Hjorth \textit{et al}.\ \cite{HKM} and Hjorth \textit{et al}.\ \cite{HLM} have
elaborated and studied several other classes of finite metric spaces which have strict $1$-negative type. These
include --- under appropriate restrictions --- finite metric spaces whose elements have been chosen from a Riemannian
manifold (and endowed with the natural inherited distances).

\section{Determining the Negative Type Gap of a Finite Metric Tree}
In this section we compute the exact value of the $1$-negative type gap $\Gamma_{T}$ (see Definition \ref{NGAP})
of a finite metric tree $(T,d)$, and then explore some consequences of this computation. We begin with an upper bound
\[
\Biggl\{ \sum\limits_{e \in E(T)} |e|^{-1} \Biggl\}^{-1}
\]
for $\Gamma_{T}$ that is determined via an algorithm, and then proceed to show that
this upper bound is also a lower
bound for $\Gamma_{T}$. Isolating the value of $\Gamma_{T}$ leads to an entirely new class of inequalities for
finite metric trees which may be termed \textit{inequalities of enhanced $1$-negative type}. These inequalities
are developed in Theorems \ref{MAIN} and \ref{REFORM}. Not surprisingly, we need to introduce some more
definitions and concepts before computing $\Gamma_{T}$. These are as follows.

\begin{defn}\label{LEVELS}
Let $T$ be a finite tree. Let $\ell \in T$ be the designated root leaf for $T$. Let $d$ denote
the ordinary path metric on $T$ and set:
\begin{eqnarray*}
k_{0} = \max\limits_{x \in T} d(x,\ell).
\end{eqnarray*}
Let $k$ be any integer such that $0 \leq k \leq k_{0}$. Then we say that a vertex $v \in T$ is a
\textit{level $k$ vertex of $T$} if $d(v,\ell) = k_{0}-k$.

The introduction of levels has the effect of partitioning $T$ into $k_{0}$ disjoint sets of vertices.
\end{defn}

We will now focus on a particular subclass of normalized $(q,t)$-simplexes $D(\vec{\omega})$ that
turn out to be pivotal in the determination of the $1$-negative type gap $\Gamma_{T}$ of a finite
metric tree $(T,d)$. The condition we introduce depends only upon the underlying tree $T$ and the
vertices of $D$. The path metric $d$ on $T$ and the normalized load vectors $\vec{\omega}$ for $D$
play no (immediate) r\^{o}le. The relevant definition is as follows.

\begin{defn}\label{GENERIC}
Let $T$ be a finite tree. Let $D$ be a $(q,t)$-simplex in $T$. Let $T_{D}$ be the minimal subtree of $T$
generated by the vertices of $D$. We say that $D$ is \textit{generically labeled} if:
\begin{enumerate}
\item[(a)] $D=T_{D}$ as sets (in other words, every vertex of $T_{D}$ belongs to $D$), and

\item[(b)] for all edges $e=(x,y) \in E(T_{D})$, $x$ and $y$ have opposite simplex parity.
\end{enumerate}

Notice that we can restate condition (b) in terms of levels:
\begin{enumerate}
\item[(c)] For all vertices $x,y \in T_{D}$, if $x$ is in an even level of $T_{D}$ and if $y$ is in an
odd level of $T_{D}$, then $x$ and $y$ have opposite simplex parity.
\end{enumerate}
\end{defn}

\begin{rem}\label{TGENERIC}
Let $T$ be a finite tree. Suppose $T$ has been oriented via the designation of a root leaf $\ell \in T$.
We can always generically label the vertices of $T$. The easiest way to describe this process is to use levels.
Simply assign parity $a$ to all vertices of $T$ that lie in even numbered levels, and parity $b$ to all vertices
of $T$ that lie in odd numbered levels. This realizes the whole tree $T$ as a generically labeled $(q,t)$-simplex
with:
\begin{enumerate}
\item[(a)] $q=|\left\{ x \in T: x \right.$ is in an even numbered level of $\left. T \right\}|$, and \vspace*{0.01in}

\item[(b)] $t=|\left\{ y \in T: y \right.$ is in an odd numbered level of $\left. T \right\}|$.
\end{enumerate}
Clearly there is (essentially) only one way to generically label the vertices of $T$. The only other possible labeling
of the vertices of $T$ that is generic is the trivial one whereby we switch all of the parity assignments given above:
$a_{j} \leftrightarrow b_{i}$. We may therefore refer to \textit{the} generic labeling of the vertices of $T$.

In short, generic labeling a finite tree $T$ amounts to little more than a $2$-coloring of the vertices of $T$.
\end{rem}

\begin{defn}\label{GENALG}
Let $T$ be a finite tree.
Let $\ell \in T$ be the designated root leaf of $T$.
Partition the vertices of $T$ into $k_{0}+1$ levels as per Definition \ref{LEVELS}.
Let $D=[a_{j};b_{i}]_{q,t}$ denote the (essentially) unique $(q,t)$-simplex in T
that generically labels the vertices of $T$
as per Remark \ref{TGENERIC}.
We may assume that the level $0$ vertices in $T$ have parity $a$ in the simplex $D$.

Let $k_{1}$ be an arbitrary odd natural number such that $k_{1} \leq k_{0}$, and let
$k_{2}$ be an arbitrary even natural number such that $k_{2} \leq k_{0}$. We may denote the level $k_{1}$ vertices of $T$ as
$b_{i}^{(k_{1})}$ where $i$ ranges over a suitable segment of the natural numbers, and we may denote the level $k_{2}$
vertices of $T$ as $a_{j}^{(k_{2})}$ where $j$ ranges over a suitable segment of the natural numbers. This notation allows
us to ``rewrite'' the generically labeled simplex $D=T$ in the form $D=[a_{j}^{(k_{2})};b_{i}^{(k_{1})}]_{q,t}$.

Now, given $\delta > 0$, we define the following \textit{generic algorithm} that assigns a unique
vector $\vec{\omega}_{\delta} =(m_{j}^{(k_{2})},n_{i}^{(k_{1})}) \in \mathbb{R}^{q+t}$ to the generically labeled simplex $D=T$:
\begin{enumerate}
\item[(a)] Set each level $0$ weight to be:
\begin{eqnarray*}
m_{j}^{(0)} & = & \frac{\delta}{\left|e\left(a_{j}^{(0)}\right)\right|}.
\end{eqnarray*}

\item[(b)] If $k_{1} < k_{0}$ is odd and if weights have been assigned by the algorithm to all level $k$ vertices
of $T$ for all $k < k_{1}$, then set:
\begin{eqnarray*}
n_{i}^{(k_{1})} & = & \alpha_{L}\left(e\left(b_{i}^{(k_{1})}\right)\right) - \overline{\beta}_{L}\left(e\left(b_{i}^{(k_{1})}\right)\right)
+ \frac{\delta}{\left|e\left(b_{i}^{(k_{1})}\right)\right|}
\end{eqnarray*}
for each value of the subscript $i$.

\item[(c)] If $k_{2} < k_{0}$ is even and if weights have been assigned by the algorithm to all level $k$ vertices
of $T$ for all $k < k_{2}$, then set:
\begin{eqnarray*}
m_{j}^{(k_{2})} & = & \beta_{L}\left(e\left( a_{j}^{(k_{2})}\right)\right) - \overline{\alpha}_{L}\left(e\left( a_{j}^{(k_{2})}\right)\right)
+ \frac{\delta}{\left|e\left( a_{j}^{(k_{2})}\right)\right|}
\end{eqnarray*}
for each value of the subscript $j$.

\item[(d)] If $k_{0}$ is odd, then set:
\begin{eqnarray*}
n_{1}^{(k_{0})} & = & 1 - \sum\limits_{\stackrel{\textstyle{i,k}}{k < k_{0}}} n_{i}^{(k)}.
\end{eqnarray*}

\item[(e)] If $k_{0}$ is even, then set:
\begin{eqnarray*}
m_{1}^{(k_{0})} & = & 1 - \sum\limits_{\stackrel{\textstyle{j,k}}{k < k_{0}}} m_{j}^{(k)}.
\end{eqnarray*}
\end{enumerate}
\end{defn}

\begin{lem}\label{GALEM}
Let $T$ be a finite tree. Let $\ell \in T$ be the designated root leaf of $T$. Let $e_{\ell}=(z,\ell)$ denote
the unique oriented edge in $E(T)$ whose right vertex is $\ell$. Let $D=[a_{j}^{(k_{2})};b_{i}^{(k_{1})}]_{q,t}$ denote
the (essentially) unique $(q,t)$-simplex in $T$ that generically labels the vertices of $T$. (Here, as in Definition \ref{GENALG},
superscripts are being used to denote the level of each vertex in the $(q,t)$-simplex $D=T$.) For each $\delta > 0$, let
$\vec{\omega}_{\delta} = (m_{j}^{(k_{2})},n_{i}^{(k_{1})}) \in \mathbb{R}^{q+t}$ be the vector assigned to the
$(q,t)$-simplex $D$ by the generic algorithm. Then:
\begin{enumerate}
\item[(a)] $\vec{\omega}_{\delta}$ is a load vector for the $(q,t)$-simplex $D$ if and only if
\begin{eqnarray*}
\delta & < & \Biggl\{ \sum\limits_{e \in E(T) \setminus \{e_{\ell}\}} |e|^{-1} \Biggl\}^{-1}.
\end{eqnarray*}

\item[(b)] $\vec{\omega}_{\delta}$ is a normalized load vector for the $(q,t)$-simplex $D$ if and only if
\begin{eqnarray*}
\delta & = & \Biggl\{ \sum\limits_{e \in E(T)} |e|^{-1} \Biggl\}^{-1}.
\end{eqnarray*}
\end{enumerate}
\end{lem}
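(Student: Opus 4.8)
\medskip\noindent\textbf{Proof proposal.}
The plan is to compute the vector $\vec{\omega}_{\delta}$ one weight at a time, working upward through the levels of $T$, and to show that every weight it assigns is automatically positive \emph{except} possibly the one attached to the root $\ell$, which alone carries the dependence on $\delta$. Write $w(v)$ for the weight ($m$ or $n$) that the generic algorithm assigns to a vertex $v$, and for $v\ne\ell$ let $e(v)\in E(T)$ be the oriented edge with left vertex $v$. One first checks that the algorithm is well posed --- the right-hand sides of Definition \ref{GENALG}(b)--(e) only involve weights already assigned: the sums attached to a level $k$ vertex $v$ in (b) or (c) never see $w(v)$ itself (the parity of $v$, respectively the strictness of the partition sum, excludes it), and so they depend only on weights at levels $<k$.

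The key step will be the claim, to be proved by induction on the level of $v$, that \emph{for every vertex $v\ne\ell$} one has
\[
w(v)=\delta\left(\frac{1}{|e(v)|}+\sum_{c\ \text{a child of}\ v}\frac{1}{|e(c)|}\right)>0
\qquad\text{and}\qquad
\sigma(e(v))\bigl(\alpha_{L}(\vec{\omega}_{\delta},e(v))-\beta_{L}(\vec{\omega}_{\delta},e(v))\bigr)=\frac{\delta}{|e(v)|},
\]
where $\sigma(e)=+1$ or $-1$ according as the left vertex of $e$ has simplex parity $a$ or $b$. The base case is the set of level $0$ leaves, which have parity $a$: there $L(e(v))=\{v\}$, and $w(v)=\delta/|e(v)|$ by Definition \ref{GENALG}(a). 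For the inductive step one uses that $\overline{L}(e(v))$ is the disjoint union of the subtrees $L(e(c))$ over the children $c$ of $v$, that each such $c$ has parity opposite to $v$, and the defining formula for $w(v)$; expressing $\alpha_{L}(e(v))$ and $\beta_{L}(e(v))$ as the contribution of $v$ itself plus the sums over the children and substituting the two inductive hypotheses, both assertions drop out after a short computation. In particular $w(v)>0$ for every $v\ne\ell$, so the only obstruction to $\vec{\omega}_{\delta}$ being a load vector is the sign of $w(\ell)$.

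It then remains to evaluate $w(\ell)$, and here I would treat the case where $k_{0}$ is odd (so $\ell$ has parity $b$); the case $k_{0}$ even is identical after interchanging the roles of $a$ and $b$. By Definition \ref{GENALG}(d), $w(\ell)=1-\sum_{v\ne\ell,\ \text{parity}\ b}w(v)$. Substituting the formula from the claim and re-indexing the double sum $\sum_{v}\sum_{c}$ as a sum over edges, the ``$1/|e(v)|$'' terms contribute $\sum_{e\in E_{b}}|e|^{-1}$ and the ``$1/|e(c)|$'' terms contribute $\sum_{e\in E_{a}\setminus\{e_{\ell}\}}|e|^{-1}$, where $E_{a}$ (resp.\ $E_{b}$) is the set of edges whose left vertex has parity $a$ (resp.\ $b$); the edge $e_{\ell}$ is absent from the second sum precisely because its right vertex is $\ell$, which the outer sum excludes. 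Since $E(T)=E_{a}\sqcup E_{b}$ and $e_{\ell}\in E_{a}$, this yields $w(\ell)=1-\delta\sum_{e\in E(T)\setminus\{e_{\ell}\}}|e|^{-1}$, so $w(\ell)>0$ if and only if $\delta<\bigl\{\sum_{e\in E(T)\setminus\{e_{\ell}\}}|e|^{-1}\bigr\}^{-1}$. With the previous paragraph this proves (a).

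For (b), note that $\vec{\omega}_{\delta}$ is normalized exactly when $\sum_{\text{parity}\ a}w(v)=1=\sum_{\text{parity}\ b}w(v)$. When $k_{0}$ is odd the second equality is automatic, being a restatement of Definition \ref{GENALG}(d). Running the same edge-counting argument for the parity $a$ vertices --- all of which now lie at levels $<k_{0}$, so that no edge is dropped --- yields $\sum_{\text{parity}\ a}w(v)=\delta\sum_{e\in E(T)}|e|^{-1}$, which equals $1$ if and only if $\delta=\bigl\{\sum_{e\in E(T)}|e|^{-1}\bigr\}^{-1}$; and for this value of $\delta$ one has $\delta<\bigl\{\sum_{e\in E(T)\setminus\{e_{\ell}\}}|e|^{-1}\bigr\}^{-1}$ since $|e_{\ell}|^{-1}>0$, so by (a) the vector $\vec{\omega}_{\delta}$ is a load vector. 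I expect the main obstacle to be purely organizational: keeping straight, in the inductive step of the displayed claim, which vertices contribute to each of $\alpha_{L},\overline{\alpha}_{L},\beta_{L},\overline{\beta}_{L}$ at $e(v)$ and how the parity alternates, and handling the exceptional edge $e_{\ell}$ and the root $\ell$ correctly in the two edge-counting computations.
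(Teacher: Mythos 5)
Your proof is correct, and it follows the same underlying strategy as the paper --- unfold the recursion of the generic algorithm and account for which edges contribute to the various sums --- but it is organized at a finer granularity. The paper works with \emph{level sums} $\sum_j m_j^{(k_2)}$, $\sum_i n_i^{(k_1)}$ and derives a telescoping recursion for these; you instead prove a closed-form \emph{per-vertex} formula $w(v)=\delta\bigl(|e(v)|^{-1}+\sum_{c}|e(c)|^{-1}\bigr)$ by induction on levels, and then sum over vertices of each parity. This buys you two things the paper glosses over. First, positivity: the paper simply asserts that ``only $m_1^{(k_0)}$ is possibly non positive,'' whereas your explicit formula shows $w(v)>0$ for every $v\ne\ell$ without appeal. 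Second, your auxiliary inductive hypothesis $\sigma(e(v))\bigl(\alpha_L(\vec{\omega}_\delta,e(v))-\beta_L(\vec{\omega}_\delta,e(v))\bigr)=\delta/|e(v)|$ is precisely the identity that the paper invokes later, unproved, in the proof of Theorem \ref{UPPERBOUND} (``The generic algorithm is structured so that $|\alpha_L-\beta_L|=\delta/|e|$ for all oriented edges $e$''), so your argument in effect folds that verification into this lemma. The bookkeeping around $e_\ell$ --- noting that $e_\ell\in E_a$ when $\ell$ has parity $b$, and that the sole child of $\ell$ (namely $z$) accounts for the dropped edge --- is handled correctly. In short: same proof in spirit, but yours is slightly more informative and closes a small logical gap in the original.
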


\begin{proof}
Let $\delta > 0$ be given. For simplicity, and using the notations of Definitions \ref{LEVELS} and \ref{GENALG}, we will
assume that $k_{0}$ is even. (The case where $k_{0}$ is odd is entirely similar and is omitted.)

According to the definition of the generic algorithm, only
\begin{eqnarray*}
m_{1}^{(k_{0})} & = & 1 - \sum\limits_{\stackrel{\textstyle{j,k}}{k < k_{0}}} m_{j}^{(k)}
\end{eqnarray*}
is possibly non positive. So $\vec{\omega}_{\delta}$ is a load vector for $D$ if and only if $m_{1}^{(k_{0})} > 0$.
With this in mind, we shall address both parts of the lemma simultaneously.
Applying the definition of the generic algorithm repeatedly leads to the following observations.

If we sum $m_{j}^{(0)}$ for all level zero vertices in $T$ we obtain:
\begin{eqnarray*}
\sum\limits_{j} m_{j}^{(0)} & = & \delta \cdot \sum\limits_{j} \left| e\left( a_{j}^{(0)}\right ) \right|^{-1}.
\end{eqnarray*}

If we sum $n_{i}^{(1)}$ for all level one vertices in $T$ we obtain:
\begin{eqnarray*}
\sum\limits_{i} n_{i}^{(1)} & = & \sum\limits_{j} m_{j}^{(0)}
+ \delta \cdot \sum\limits_{i} \left| e\left( b_{i}^{(1)}\right) \right|^{-1} \\
& = & \delta \cdot \sum\limits_{i} \left| e\left( b_{i}^{(1)}\right)\right|^{-1}
+ \delta \cdot \sum\limits_{j} \left| e\left( a_{j}^{(0)}\right)\right|^{-1},
\end{eqnarray*}
where the last line follows by the previous computation.

If we sum $m_{j}^{(2)}$ for all level two vertices we obtain:
\begin{eqnarray*}
\sum\limits_{j} m_{j}^{(2)} & = &
\biggl\{ \sum\limits_{i} n_{i}^{(1)}  - \sum\limits_{j} m_{j}^{(0)}\biggl\}
+ \, \delta \cdot \sum\limits_{j} \left|e \left( a_{j}^{(2)}\right)\right|^{-1} \\
& = & \delta \cdot \sum\limits_{i} \left| e \left( b_{i}^{(1)} \right) \right|^{-1}
+ \delta \cdot \sum\limits_{j} \left|e \left( a_{j}^{(2)}\right) \right|^{-1},
\end{eqnarray*}
where the last line follows by the previous computation.

We therefore obtain the following recursive formulas by induction:
\begin{eqnarray*}
\sum\limits_{i} n_{i}^{(k_{1})} & = & \delta \cdot \sum\limits_{i} \left| e \left( b_{i}^{(k_{1})} \right) \right|^{-1}
+ \delta \cdot \sum\limits_{j} \left| e \left( a_{j}^{(k_{1}-1)} \right) \right|^{-1}
\end{eqnarray*}
for all odd natural numbers $k_{1}$ such that $k_{1} < k_{0}$, and
\begin{eqnarray*}
\sum\limits_{j} m_{j}^{(k_{2})} & = & \delta \cdot \sum\limits_{j} \left| e \left( a_{j}^{(k_{2})} \right) \right|^{-1}
+ \delta \cdot \sum\limits_{i} \left| e \left( b_{i}^{(k_{2}-1)}\right) \right|^{-1}
\end{eqnarray*}
for all even natural numbers $k_{2}$ such that $0 < k_{2} < k_{0}$. Hence:
\[
x = \sum\limits_{\stackrel{\textstyle{j,k}}{k < k_{0}}} m_{j}^{(k)} = \delta \cdot
\Biggl\{\sum\limits_{e \in E(T) \setminus \{ e_{\ell} \}} |e|^{-1}\Biggl\}.
\]
And so $1-x>0$ if and only if $\delta < \Biggl\{ \sum\limits_{e \in E(T) \setminus \{ e_{\ell}\}} |e|^{-1} \Biggl\}^{-1}$,
establishing Part (a).

Moreover, our recursive formulas show that:
\begin{eqnarray*}
\sum\limits_{i,k} n_{i}^{(k)} & = & \delta \cdot \Biggl\{ \sum\limits_{e \in E(T)} |e|^{-1} \Biggl\}.
\end{eqnarray*}
Therefore $\vec{\omega}_{\delta}$ is normalized if and only if $\delta = \Biggl\{ \sum\limits_{e \in E(T)} |e|^{-1} \Biggl\}^{-1}$.
\end{proof}

\begin{thm}\label{UPPERBOUND}
Let $(T,d)$ be a finite metric tree. Let $\Gamma_{T}$ denote the $1$-negative type gap of $(T,d)$. Then:
\begin{eqnarray*}
\Gamma_{T} & \leq & \Biggl\{ \sum\limits_{e \in E(T)} |e|^{-1} \Biggl\}^{-1}.
\end{eqnarray*}
\end{thm}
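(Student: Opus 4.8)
The plan is to produce a single explicit normalized $(q,t)$-simplex in $T$ whose simplex gap equals the right-hand side, and then simply read off the bound from the fact that $\Gamma_T$ is an infimum. Set $\delta_0 = \bigl\{ \sum_{e \in E(T)} |e|^{-1} \bigr\}^{-1}$. First I would take $D$ to be the (essentially unique) generic labeling of the whole tree, as in Remark \ref{TGENERIC}, so that $T_D = T$, and then run the generic algorithm of Definition \ref{GENALG} with parameter $\delta = \delta_0$ to obtain the vector $\vec{\omega}_{\delta_0}$. By Lemma \ref{GALEM}(b) this choice of $\delta$ is precisely the one that makes $\vec{\omega}_{\delta_0}$ a \emph{normalized} load vector for $D$, so $D(\vec{\omega}_{\delta_0})$ is a genuine normalized $(q,t)$-simplex in $T$; hence $\Gamma_T \le \gamma_D(\vec{\omega}_{\delta_0})$ by Definition \ref{NGAP}. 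Everything then reduces to evaluating $\gamma_D(\vec{\omega}_{\delta_0})$.

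To evaluate it I would invoke the edge-by-edge formula of Theorem \ref{FORMULA}:
\[
\gamma_D(\vec{\omega}_{\delta_0}) = \sum_{e \in E(T)} \bigl( \alpha_L(\vec{\omega}_{\delta_0},e) - \beta_L(\vec{\omega}_{\delta_0},e) \bigr)^2 \, |e|.
\]
The crux is the claim that for \emph{every} oriented edge $e = e(x) \in E(T)$ one has $\bigl| \alpha_L(\vec{\omega}_{\delta_0},e) - \beta_L(\vec{\omega}_{\delta_0},e) \bigr| = \delta_0 / |e|$. Granting this, the sum collapses:
\[
\gamma_D(\vec{\omega}_{\delta_0}) = \sum_{e \in E(T)} \frac{\delta_0^{2}}{|e|^{2}} \cdot |e| = \delta_0^{2} \sum_{e \in E(T)} |e|^{-1} = \delta_0^{2} \cdot \delta_0^{-1} = \delta_0,
\]
which is exactly the asserted upper bound for $\Gamma_T$.

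The claim itself I would prove by a short case analysis on the level (equivalently, the simplex parity) of the left vertex $x$ of $e$, using nothing beyond the recursive rules (a)--(c) of the generic algorithm and the identity $L(e) = \overline{L}(e) \cup \{x\}$. If $x$ is a level-$0$ vertex then $x$ is a leaf of $T$, so $\overline{L}(e) = \emptyset$, and rule (a) gives $\alpha_L(\vec{\omega}_{\delta_0},e) - \beta_L(\vec{\omega}_{\delta_0},e) = m_j^{(0)} = \delta_0/|e|$ on the nose. If $x$ lies at an even level $k$ with $0 < k < k_0$ then $x$ has parity $a$, so $\beta_L(\vec{\omega}_{\delta_0},e) = \overline{\beta}_L(\vec{\omega}_{\delta_0},e)$ and $\alpha_L(\vec{\omega}_{\delta_0},e) = \overline{\alpha}_L(\vec{\omega}_{\delta_0},e) + m_j^{(k)}$; substituting the prescription for $m_j^{(k)}$ from rule (c) cancels the two strict partition sums and leaves $\alpha_L - \beta_L = \delta_0/|e|$. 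The odd-level case is symmetric via rule (b): there $x$ has parity $b$, so $\alpha_L = \overline{\alpha}_L$ and $\beta_L = \overline{\beta}_L + n_i^{(k)}$, and substituting the prescription for $n_i^{(k)}$ gives $\alpha_L - \beta_L = -\delta_0/|e|$. The only vertex of $T$ not covered by rules (a)--(c) is the root $\ell$, which is the left vertex of no edge and so contributes nothing to the sum, so every edge has been accounted for. (As usual it suffices to write the bookkeeping out for $k_0$ even; the case $k_0$ odd is identical after interchanging the roles of the $m$'s and $n$'s.)

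I expect the only genuine work to be this case analysis, and the point requiring the most care is that the partition sums $\alpha_L, \overline{\alpha}_L, \beta_L, \overline{\beta}_L$ appearing in rules (b) and (c) really are those of the \emph{final} vector $\vec{\omega}_{\delta_0}$ — which holds because the algorithm assigns weights strictly level by level, so every vertex lying (strictly) to the left of $e(x)$ has already received its final weight before $x$ is processed — together with checking that the boundary edges (the leaf edges at level $0$ and the edge $e_\ell$ incident to the root) are handled correctly by the generic rules. The normalization of $\vec{\omega}_{\delta_0}$ and the gap formula are supplied verbatim by Lemma \ref{GALEM}(b) and Theorem \ref{FORMULA}, respectively, so no further argument is needed.
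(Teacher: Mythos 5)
Your proposal is correct and follows essentially the same route as the paper's own proof: exhibit the generically labeled, generically weighted simplex $D(\vec{\omega}_{\delta_0})$, invoke Lemma \ref{GALEM}(b) to certify normalization, and evaluate $\gamma_D$ via Theorem \ref{FORMULA} using the identity $|\alpha_L - \beta_L| = \delta_0/|e|$ on every edge. The paper merely asserts that identity (``the generic algorithm is structured so that\dots''), whereas you carry out the level-by-level verification, including the observation that the level-ordering of the algorithm guarantees the partition sums used in rules (b)--(c) coincide with those of the final vector; this is a useful elaboration but not a different argument.
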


\begin{proof}
For any given normalized $(q,t)$-simplex $D(\vec{\omega})$ in $T$, the simplex gap evaluation $\gamma_{D}(\vec{\omega})$ provides an
upper bound for $\Gamma_{T}$ (by definition).

Let $D$ denote the (essentially) unique $(q,t)$-simplex in $T$ that generically labels the vertices of $T$.
Let $\vec{\omega}_{G}$ denote the unique normalized load vector for $D$ that is generated by the generic algorithm.
By Lemma \ref{GALEM}, $\vec{\omega}_{G} = \vec{\omega}_{\delta}$ where:
\begin{eqnarray*}
\delta & = & \Biggl\{ \sum\limits_{e \in E(T)} |e|^{-1}\Biggl\}^{-1}.
\end{eqnarray*}

Consider the resulting normalized $(q,t)$-simplex $D(\vec{\omega}_{G})$ in $T$. The generic algorithm is structured
so that
\begin{eqnarray*}
\bigl|\alpha_{L}(\vec{\omega}_{G},e) - \beta_{L}(\vec{\omega}_{G},e)\bigl| & = & \frac{\delta}{|e|}
\end{eqnarray*}
for all oriented edges $e \in E(T)$.

Hence by Theorem \ref{FORMULA}:
\begin{eqnarray*}
\gamma_{D}(\vec{\omega}_{G}) & = & \sum\limits_{e \in E(T)} \frac{\delta^{2}}{|e|^{2}} \cdot |e| \\
& = & \delta^{2} \cdot \sum\limits_{e \in E(T)} |e|^{-1} \\
& = & \delta^{2} \cdot \delta^{-1} \\
& = & \Biggl\{ \sum\limits_{e \in E(T)} |e|^{-1} \Biggl\}^{-1}.
\end{eqnarray*}
\end{proof}

Theorem \ref{UPPERBOUND} already gives an indication that the generic algorithm is going to be very important
in the context of this paper. We therefore isolate the following natural definition.

\begin{defn}
To say that a finite metric tree $(T,d)$ is \textit{generically labeled and generically weighted} means we are considering the
(essentially) unique normalized $(q,t)$-simplex $D(\vec{\omega}_{G})$ in $T$ with the following properties:
\begin{itemize}
\item[(a)] $q+t = |T|$,

\item[(b)] $D$ is generically labeled, and

\item[(c)] $\vec{\omega}_{G}$ is the unique normalized load vector for $D$ that is generated by the
generic algorithm.
\end{itemize}
\end{defn}

Suppose $D=[a_{j};b_{i}]_{q,t}$ is a $(q,t)$-simplex in a finite metric tree $(T,d)$. Currently, the domain of
the simplex gap function $\gamma_{D}$ is restricted to the surface of normalized load vectors $N_{q,t} \subset
\mathbb{R}_{+}^{q+t}$. We would like to extend the domain of definition of $\gamma_{D}$ to all of the open
set $\mathbb{R}_{+}^{q+t}$ in such a way that the extended simplex gap function (which we will denote $\gamma_{D}^{\times}$)
retains an accessible encoding of the geometry of the underlying tree $T$. We do this by ``formally'' adapting
the formulas of Theorem \ref{FORMULA}.

\begin{defn}\label{XGAP}
Let $(T,d)$ be a finite metric tree. Let $D=[a_{j};b_{i}]_{q,t}$ be a $(q,t)$-simplex in $T$.
The \textit{extended simplex gap function} $\gamma_{D}^{\times} : \mathbb{R}_{+}^{q+t} \rightarrow \mathbb{R}$
is defined as follows:
\begin{eqnarray*}
\gamma_{D}^{\times}(\vec{\omega})
& = & \sum\limits_{e \in E(T_{D})} \biggl\{ (\alpha_{L}(\vec{\omega},e) - \beta_{L}(\vec{\omega},e))^{2}
+ (\alpha_{R}(\vec{\omega},e) - \beta_{R}(\vec{\omega},e))^{2} \biggl\} \cdot \frac{|e|}{2}
\end{eqnarray*}
for all $\vec{\omega} =(m_{1}, \ldots, m_{q}, n_{1}, \ldots, n_{t}) \in \mathbb{R}_{+}^{q+t}$.
Notice that we have $\gamma_{D}^{\times}(\vec{\omega}) = \gamma_{D}(\vec{\omega})$
for all of the normalized load vectors $\vec{\omega} \in N_{q,t}$ on account of Theorem \ref{FORMULA}.
\end{defn}

\begin{notation}
In relation to Definition \ref{XGAP}, given an oriented edge $e \in E(T_{D})$, we will denote the ``$e$-term''
of the extended gap $\gamma_{D}^{\times}(\vec{\omega})$ by $\gamma_{D,e}^{\times}(\vec{\omega})$. That is:
\begin{eqnarray*}
\gamma_{D,e}^{\times}(\vec{\omega}) & = &
\frac{\bigl( (\alpha_{L}(\vec{\omega},e) - \beta_{L}(\vec{\omega},e))^{2} +
(\alpha_{R}(\vec{\omega},e) - \beta_{R}(\vec{\omega},e))^{2} \bigl) \cdot |e|}{2}.
\end{eqnarray*}
According to this notation, $\gamma_{D}^{\times}(\vec{\omega}) = \sum\limits_{e \in E(T_{D})} \gamma_{D,e}^{\times}(\vec{\omega})$
for each $\vec{\omega} \in \mathbb{R}_{+}^{q+t}$.

This notation is used in the proof of the next lemma. This lemma points out that provided the $(q,t)$-simplex $D$ is generically labeled,
the partial derivatives of the extended gap function $\gamma_{D}^{\times}$ pack together like Russian dolls when constrained to $N_{q,t}$,
the surface of normalized load vectors for $D$.
The lemma will help us compute $\min\limits_{\vec{\omega} \in N_{q,t}} \gamma_{D}^{\times}(\vec{\omega})$ in this (generically labeled)
setting.
\end{notation}

\begin{lem}\label{RDOLL}
Let $(T,d)$ be a finite metric tree.
Let $D=[a_{j};b_{i}]_{q,t}$ be a generically labeled $(q,t)$-simplex in $T$.
Let $\gamma_{D}^{\times}$ denote the extended gap function associated with the $(q,t)$-simplex $D$.
Then, for all oriented edges $e \in E(T_{D})$ and all normalized load vectors $\vec{\omega} \in N_{q,t}$,
we have the following relationships:
\begin{itemize}
\item[(a)] If $e=(a_{j},b_{i})$, then
\begin{eqnarray*}
\frac{\partial \gamma_{D}^{\times}}{\partial m_{j}} (\vec{\omega}) & = &
2 \left( \alpha_{L}(\vec{\omega},e) - \beta_{L}(\vec{\omega},e) \right) \cdot |e|
- \frac{\partial \gamma_{D}^{\times}}{\partial n_{i}} (\vec{\omega}).
\end{eqnarray*}

\item[(b)] If $e=(b_{i},a_{j})$, then
\begin{eqnarray*}
\frac{\partial \gamma_{D}^{\times}}{\partial n_{i}} (\vec{\omega}) & = &
2 \left( \beta_{L}(\vec{\omega},e) - \alpha_{L}(\vec{\omega},e) \right) \cdot |e|
- \frac{\partial \gamma_{D}^{\times}}{\partial m_{j}} (\vec{\omega}).
\end{eqnarray*}
\end{itemize}
\end{lem}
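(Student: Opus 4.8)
The plan is to differentiate the edge-sum $\gamma_{D}^{\times}(\vec{\omega})=\sum_{f\in E(T_{D})}\gamma_{D,f}^{\times}(\vec{\omega})$ of Definition \ref{XGAP} term by term, and then to exploit a cancellation that is forced by the fact that $a_{j}$ and $b_{i}$ are the two endpoints of one and the same oriented edge $e$.

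First I would record the elementary linearity facts. For each fixed oriented edge $f\in E(T_{D})$ the partition sums $\alpha_{L}(\vec{\omega},f),\alpha_{R}(\vec{\omega},f)$ depend linearly on the coordinate $m_{j}$, with $\partial\alpha_{L}(\vec{\omega},f)/\partial m_{j}=1$ if $a_{j}\in L(f)$ and $=0$ if $a_{j}\in R(f)$ (and dually for $\alpha_{R}$), while $\beta_{L},\beta_{R}$ do not involve $m_{j}$; the symmetric statements hold for $n_{i}$. Since $a_{j}$ is a vertex of $T_{D}$ it lies in exactly one of $L(f),R(f)$, so exactly one derivative in each pair equals $1$. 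Applying the chain rule to $\gamma_{D,f}^{\times}(\vec{\omega})=\frac{|f|}{2}\bigl((\alpha_{L}(\vec{\omega},f)-\beta_{L}(\vec{\omega},f))^{2}+(\alpha_{R}(\vec{\omega},f)-\beta_{R}(\vec{\omega},f))^{2}\bigr)$ and then specializing to a normalized load vector $\vec{\omega}\in N_{q,t}$ --- where the normalization identities recorded in Definition \ref{PARTSUM} give $\alpha_{R}(\vec{\omega},f)-\beta_{R}(\vec{\omega},f)=-(\alpha_{L}(\vec{\omega},f)-\beta_{L}(\vec{\omega},f))$ --- collapses the $f$-term to
\[
\frac{\partial\gamma_{D,f}^{\times}}{\partial m_{j}}(\vec{\omega})=|f|\,\bigl(\alpha_{L}(\vec{\omega},f)-\beta_{L}(\vec{\omega},f)\bigr)\cdot\sigma_{j}(f),\qquad
\frac{\partial\gamma_{D,f}^{\times}}{\partial n_{i}}(\vec{\omega})=-|f|\,\bigl(\alpha_{L}(\vec{\omega},f)-\beta_{L}(\vec{\omega},f)\bigr)\cdot\tau_{i}(f),
\]
where $\sigma_{j}(f)=+1$ if $a_{j}\in L(f)$ and $-1$ if $a_{j}\in R(f)$, and $\tau_{i}(f)$ is defined the same way with $b_{i}$ in place of $a_{j}$. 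Summing over $f$ and adding, the coefficient of $|f|\,(\alpha_{L}(\vec{\omega},f)-\beta_{L}(\vec{\omega},f))$ in $\partial\gamma_{D}^{\times}/\partial m_{j}+\partial\gamma_{D}^{\times}/\partial n_{i}$ is $\sigma_{j}(f)-\tau_{i}(f)$.

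The crux is the structural observation that resolves this sum. For every edge $f\neq e$, the edge $e$ still joins $a_{j}$ to $b_{i}$ inside $T_{D}\setminus\{f\}$, so $a_{j}$ and $b_{i}$ lie in the same one of the two components $L(f)$, $R(f)$ (recall from Definition \ref{CROSS} that these are exactly the two parts into which $f$ splits $T_{D}$); hence $\sigma_{j}(f)=\tau_{i}(f)$ and the $f$-term contributes nothing. Only the term $f=e$ survives, and it is precisely here that the generic labeling hypothesis is used: it forces the two endpoints of $e$ to have opposite simplex parity, so $e$ is genuinely of the form $(a_{j},b_{i})$ or $(b_{i},a_{j})$. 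In case (a), $e=(a_{j},b_{i})$ places $a_{j}$ to the left and $b_{i}$ to the right of $e$, giving $\sigma_{j}(e)-\tau_{i}(e)=1-(-1)=2$; in case (b), $e=(b_{i},a_{j})$ gives $\sigma_{j}(e)-\tau_{i}(e)=(-1)-1=-2$. Thus
\[
\frac{\partial\gamma_{D}^{\times}}{\partial m_{j}}(\vec{\omega})+\frac{\partial\gamma_{D}^{\times}}{\partial n_{i}}(\vec{\omega})=\pm\,2\bigl(\alpha_{L}(\vec{\omega},e)-\beta_{L}(\vec{\omega},e)\bigr)\cdot|e|,
\]
with sign $+$ in case (a) and $-$ in case (b); transposing one of the two partial derivatives to the other side then yields the two stated formulas, after rewriting $-(\alpha_{L}(\vec{\omega},e)-\beta_{L}(\vec{\omega},e))$ as $\beta_{L}(\vec{\omega},e)-\alpha_{L}(\vec{\omega},e)$ in case (b).

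The only genuinely delicate point I anticipate is sign bookkeeping: keeping the left/right orientation and the $a/b$ parity conventions consistent through the chain rule and through the evaluation of $\sigma_{j},\tau_{i}$ at $f=e$. Once the cancellation $\sigma_{j}(f)=\tau_{i}(f)$ for $f\neq e$ is in hand, the remainder of the argument is mechanical.
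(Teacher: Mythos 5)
Your proof is correct and takes essentially the same approach as the paper's: differentiate the edge-decomposed extended gap function term by term, observe that the two partial derivatives cancel on every edge $f\neq e$ because $a_j$ and $b_i$ lie on the same side of $f$, and then read off the surviving $f=e$ contribution using the normalization identity $\alpha_R-\beta_R=-(\alpha_L-\beta_L)$. The only cosmetic difference is that you invoke the normalization for every edge $f$, while the paper points out that the cancellation for $f\neq e$ actually holds on all of $\mathbb{R}_+^{q+t}$ and only uses normalization for the $f=e$ term; this does not affect the validity of your argument.
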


\begin{proof}
The proofs of Part (a) and Part (b) are very similar, so we will just concentrate on Part (a).
This requires us to consider a fixed  oriented edge $e \in E(T_{D})$ of the form $e=(a_{j},b_{i})$.

Suppose $f \not= e$ is some other oriented edge in the minimal subtree $T_{D}$. Then $f$ is either
to the left of $a_{j}$ or to the right of $b_{i}$. Let's assume, for arguments sake, that $f$ is to
the left of $a_{j}$. (The other case is entirely similar.) In this context we have both $a_{j}$ and $b_{i}$
on the right of $f$. That is, $j \in A_{R}(f)$ and $i \in B_{R}(f)$. (See Definition \ref{PARTSUM}.)
Consequently:
\begin{eqnarray*}
\frac{\partial \gamma_{D,f}^{\times}}{\partial m_{j}}(\vec{\omega}) & = & (\alpha_{R}(\vec{\omega},f) - \beta_{R}(\vec{\omega},f)) \cdot |f|,
\,\, {\rm{and}} \\
\frac{\partial \gamma_{D,f}^{\times}}{\partial n_{i}}(\vec{\omega}) & = & (\beta_{R}(\vec{\omega},f) - \alpha_{R}(\vec{\omega},f)) \cdot |f|,
\end{eqnarray*}
for all $\vec{\omega} \in \mathbb{R}_{+}^{q+t}$. By adding these two formulas we see that
\begin{eqnarray}\label{RDONE}
\Biggl( \frac{\partial}{\partial m_{j}} + \frac{\partial}{\partial n_{i}} \Biggl) \gamma_{D,f}^{\times}(\vec{\omega})& = & 0
\end{eqnarray}
for all oriented edges $f \not= e$ and all $\vec{\omega} \in \mathbb{R}_{+}^{q+t}$.

On the other hand, because $a_{j}$ and $b_{i}$ are on opposite sides of $e$, we see that
\begin{eqnarray*}
\frac{\partial \gamma_{D,e}^{\times}}{\partial m_{j}}(\vec{\omega}) & = &
\bigl( \alpha_{L}(\vec{\omega},e) - \beta_{L}(\vec{\omega},e) \bigl) \cdot |e|, \,\, {\rm{and}}\\
\frac{\partial \gamma_{D,e}^{\times}}{\partial n_{i}}(\vec{\omega}) & = &
\bigl( \beta_{R}(\vec{\omega},e) - \alpha_{R}(\vec{\omega},e) \bigl) \cdot |e|,
\end{eqnarray*}
for any $\vec{\omega} \in \mathbb{R}_{+}^{q+t}$. Therefore:
\begin{eqnarray*}
\Biggl( \frac{\partial}{\partial m_{j}} + \frac{\partial}{\partial n_{i}} \Biggl) \gamma_{D,e}^{\times} (\vec{\omega}) & = &
\bigl( (\alpha_{L}(\vec{\omega},e) - \alpha_{R}(\vec{\omega},e)) + (\beta_{R}(\vec{\omega},e) - \beta_{L}(\vec{\omega},e)) \bigl) \cdot |e|
\end{eqnarray*}
for all $\vec{\omega} \in \mathbb{R}_{+}^{q+t}$. If, in particular,  we evaluate this last formula for a normalized load vector
$\vec{\omega} \in N_{q,t}$, then we get the following simplifications:
\begin{eqnarray}\label{RDTWO}
\Biggl( \frac{\partial}{\partial m_{j}} + \frac{\partial}{\partial n_{i}} \Biggl) \gamma_{D,e}^{\times} (\vec{\omega})& = &
\bigl( \alpha_{L}(\vec{\omega},e) - (1 - \alpha_{L}(\vec{\omega},e) ) \bigl) \cdot |e| \nonumber \\
& ~ & + \bigl( (1-\beta_{L}(\vec{\omega},e)) - \beta_{L}(\vec{\omega},e) \bigl) \cdot |e| \nonumber \\
& ~ & ~ \nonumber \\
& = & 2 \bigl( \alpha_{L}(\vec{\omega},e) - \beta_{L}(\vec{\omega},e) \bigl) \cdot |e|.
\end{eqnarray}
The lemma now follows from equation (\ref{RDTWO}), which holds for the oriented edge $e$ on $N_{q,t}$, and equations (\ref{RDONE}),
which hold on $\mathbb{R}_{+}^{q+t}$ for all oriented edges $f \not= e$, by summing these equations over all such edges.
\end{proof}

Given a generically labeled $(q,t)$-simplex $D$ in a finite metric tree $(T,d)$ we now show how to
minimize the simplex gap $\gamma_{D} = \gamma_{D}(\vec{\omega})$ as a function of the normalized load vectors $\vec{\omega} \in N_{q,t}$.

\begin{thm}\label{GLMIN}
Let $(T,d)$ be a finite metric tree. Let $D=[a_{j};b_{i}]_{q,t}$ be a generically labeled $(q,t)$-simplex in $T$.
Let $\gamma_{D}^{\times}$ denoted the extended gap function associated with the simplex $D$. Let $N_{q,t} \subset
{\mathbb{R}}_{+}^{q+t}$ denoted the set of all normalized load vectors for $D$. Then:
\begin{eqnarray*}
\min\limits_{\vec{\omega} \in N_{q,t}} \gamma_{D}^{\times}(\vec{\omega}) & = &
\Biggl\{ \sum\limits_{e \in E(T_{D})} |e|^{-1} \Biggl\}^{-1}.
\end{eqnarray*}

In particular, if $d$ is just the ordinary path metric on $T$ (so that $|e|=1$ for all $e \in E(T_{D})$), then we get:
\begin{eqnarray*}
\min\limits_{\vec{\omega} \in N_{q,t}} \gamma_{D}^{\times} (\vec{\omega}) & = & \frac{1}{q+t-1}.
\end{eqnarray*}

Moreover, in general and in particular, the above minimums are attained if and only if $\vec{\omega} \in N_{q,t}$ is the
generic load vector $\vec{\omega}_{G}$ for $D$ which is assigned by the generic algorithm.
\end{thm}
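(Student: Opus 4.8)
The plan is to minimise $\gamma_D^\times$ over $N_{q,t}$ by Lagrange's Multiplier Theorem, with Lemma~\ref{RDOLL} doing the work of disentangling the partial-derivative equations. After possibly interchanging the roles of the $a$'s and the $b$'s (harmless by Remark~\ref{TGENERIC} and the symmetry of Lemma~\ref{RDOLL}), I would arrange that the level-$0$ vertices of $T_D$ have simplex parity $a$, matching the convention of Definition~\ref{GENALG}. By Definition~\ref{XGAP} the function $\gamma_D^\times$ is a quadratic form in the coordinates $m_1,\dots,m_q,n_1,\dots,n_t$ of $\vec\omega$, since each partition sum $\alpha_L(\vec\omega,e),\beta_L(\vec\omega,e),\alpha_R(\vec\omega,e),\beta_R(\vec\omega,e)$ is linear in $\vec\omega$; this form extends to all of $\mathbb{R}^{q+t}$, where I claim it is positive definite. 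Indeed, $\gamma_D^\times(\vec\omega)=0$ forces $\alpha_L(\vec\omega,e)=\beta_L(\vec\omega,e)$ for every $e\in E(T_D)$, and reading these equalities off the level-$0$ leaves of $T_D$ and working upward level by level --- genericity of the labelling being exactly what makes the $a/b$ bookkeeping collapse --- forces every coordinate of $\vec\omega$ to vanish. Hence $\gamma_D^\times$ is strictly convex and coercive, so on the affine subspace $\mathcal{A}=\{\vec\omega:\sum_j m_j=1=\sum_i n_i\}$ it has a \emph{unique} global minimiser, and this is its \emph{unique} critical point on $\mathcal{A}$, i.e. the unique solution of the Lagrange system.

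Next I would solve that Lagrange system. At the minimiser $\vec\omega^{\dagger}$ there are scalars $\lambda,\mu$ with $\partial\gamma_D^\times/\partial m_j=\lambda$ for all $j$ and $\partial\gamma_D^\times/\partial n_i=\mu$ for all $i$. Feeding this, edge by edge, into Lemma~\ref{RDOLL}: for $e=(a_j,b_i)\in E(T_D)$ (left vertex of parity $a$), part~(a) gives $\alpha_L(\vec\omega^{\dagger},e)-\beta_L(\vec\omega^{\dagger},e)=(\lambda+\mu)/(2|e|)$; for $e=(b_i,a_j)\in E(T_D)$ (left vertex of parity $b$), part~(b) gives $\alpha_L(\vec\omega^{\dagger},e)-\beta_L(\vec\omega^{\dagger},e)=-(\lambda+\mu)/(2|e|)$. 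Writing $\delta=(\lambda+\mu)/2$, the point $\vec\omega^{\dagger}$ thus satisfies $\alpha_L(\vec\omega^{\dagger},e)-\beta_L(\vec\omega^{\dagger},e)=\pm\delta/|e|$ on every oriented edge, the sign being that dictated by the parity of the left vertex. I would then recognise these equations as precisely the defining recursion of the generic algorithm (Definition~\ref{GENALG}) for the metric tree $T_D$ with parameter $\delta$: since $\alpha_L(\vec\omega,e)$ and $\overline{\beta}_L(\vec\omega,e)$ involve only the weights of vertices lying strictly to the left of $e$, all of which sit at strictly smaller levels (Definition~\ref{PARTSUM}), solving the equations from level $0$ upward reproduces the assignments (a)--(c) of Definition~\ref{GENALG} term for term (for instance, for $e=e(b_i)$ the relation $\alpha_L(e)-\beta_L(e)=-\delta/|e|$ rearranges to $n_i=\alpha_L(e)-\overline{\beta}_L(e)+\delta/|e|$), with the weight of the root leaf then fixed by the first normalisation as in (d)/(e). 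Hence $\vec\omega^{\dagger}=\vec\omega_{\delta}$ in the notation of Definition~\ref{GENALG}. Since $\vec\omega^{\dagger}$ also satisfies the second normalisation $\sum_i n_i=1$, Lemma~\ref{GALEM}(b), applied to $T_D$, forces $\delta=\{\sum_{e\in E(T_D)}|e|^{-1}\}^{-1}$; so $\vec\omega^{\dagger}=\vec\omega_G$, the generic load vector, and in particular $\vec\omega_G\in N_{q,t}$ (again by Lemma~\ref{GALEM}(b)). As $\vec\omega_G$ minimises $\gamma_D^\times$ over all of $\mathcal{A}\supseteq N_{q,t}$ and lies in $N_{q,t}$, it is the unique minimiser over $N_{q,t}$. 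The value then falls out just as in the proof of Theorem~\ref{UPPERBOUND}: $|\alpha_L(\vec\omega_G,e)-\beta_L(\vec\omega_G,e)|=\delta/|e|$ gives $\gamma_D^\times(\vec\omega_G)=\sum_{e\in E(T_D)}(\delta/|e|)^2|e|=\delta^2\sum_{e\in E(T_D)}|e|^{-1}=\delta=\{\sum_{e\in E(T_D)}|e|^{-1}\}^{-1}$; and when $d$ is the ordinary path metric, $\sum_{e\in E(T_D)}|e|^{-1}=|E(T_D)|=|T_D|-1=q+t-1$, since a tree on $q+t$ vertices has $q+t-1$ edges and $T_D=D$ has $q+t$ vertices.

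The step I expect to be the real obstacle is the identification in the second paragraph: checking that the Lagrange relations $\alpha_L(\vec\omega^{\dagger},e)-\beta_L(\vec\omega^{\dagger},e)=\pm\delta/|e|$ unwind, level by level, into the generic algorithm --- keeping the signs straight and correctly distinguishing the strict partition sums $\overline{\alpha}_L,\overline{\beta}_L$ from $\alpha_L,\beta_L$ --- and, crucially, carrying this out without ever assuming that the coordinates of $\vec\omega^{\dagger}$ are positive (positivity, hence $\vec\omega_G\in N_{q,t}$, is recovered only at the very end from Lemma~\ref{GALEM}(b)). A secondary point that needs care is the positive-definiteness of $\gamma_D^\times$, which is what licenses passing from ``critical point'' to ``unique global minimiser,'' and which again leans essentially on the genericity of the labelling.
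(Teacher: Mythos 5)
Your proof is correct and follows essentially the same route as the paper's: set up the Lagrange system, feed it into Lemma~\ref{RDOLL}, recognise the resulting recursion as the generic algorithm, invoke Lemma~\ref{GALEM}(b) to pin down $\delta$, and compute the value as in Theorem~\ref{UPPERBOUND}. The one genuine improvement is your explicit positive-definiteness and coercivity argument for $\gamma_D^\times$ on the affine subspace $\mathcal{A}$, with the level-by-level reading of $\alpha_L(\vec\omega,e)=\beta_L(\vec\omega,e)$: this is exactly what makes ``appealing to Lagrange's Theorem'' into a complete argument, since $N_{q,t}$ is open and one must otherwise justify that the infimum is attained at an interior critical point rather than on the boundary; the paper leaves that step implicit.
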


\begin{proof}
The idea of the proof is to use Lagrange's (Multiplier) Theorem on a large scale. In relation to using this theorem,
note that the extended gap function $\gamma_{D}^{\times}$ is defined on an open set (namely, $\mathbb{R}_{+}^{q+t}$)
that contains the constraint surface $N_{q,t}$, which consists
of all normalized load vectors for $D$. We may assume (although it is
not strictly necessary) that the level zero vertices of the minimal subtree $T_{D}$ ($=D$, as sets) all have parity
$a$.

Accordingly, we introduce two Lagrange multipliers $\lambda_{1},\lambda_{2}$ and proceed to solve the system:
\begin{eqnarray}\label{SYSONE}
~ & ~ &
\left\{ \begin{array}{c}
\frac{\partial}{\partial m_{j}} \biggl( \gamma_{D}^{\times} (\vec{\omega}) -
\lambda_{1} \cdot \sum\limits_{j_{1}=1}^{q} m_{j_{1}} - \lambda_{2} \cdot
\sum\limits_{i_{1}=1}^{t} n_{i_{1}} \biggl) = 0, \,\, 1 \leq j \leq q \\
~ \\
\frac{\partial}{\partial n_{i}} \biggl( \gamma_{D}^{\times} (\vec{\omega}) -
\lambda_{1} \cdot \sum\limits_{j_{1}=1}^{q} m_{j_{1}} - \lambda_{2} \cdot
\sum\limits_{i_{1}=1}^{t} n_{i_{1}} \biggl) = 0, \,\, 1 \leq i \leq t
\end{array} \right.
\end{eqnarray}
subject to the two constraints imposed by the condition $\vec{\omega} \in N_{q,t}$.

Obviously we may rewrite the system of equations (\ref{SYSONE}) as:
\begin{eqnarray}\label{SYSTWO}
~ & ~ &
\left\{ \begin{array}{c}
\frac{\partial \gamma_{D}^{\times}}{\partial m_{j}} (\vec{\omega}) = \lambda_{1}, \,\, 1 \leq j \leq q\\
~ \\
\frac{\partial \gamma_{D}^{\times}}{\partial n_{i}} (\vec{\omega}) = \lambda_{2}, \,\, 1 \leq i \leq t\\
~\\
\vec{\omega} \in N_{q,t}. \hspace*{0.85in}
\end{array} \right.
\end{eqnarray}

Now consider an arbitrary oriented edge $e \in E(T_{D})$. If $e=(a_{j},b_{i})$ then system (\ref{SYSTWO})
in tandem with Lemma \ref{RDOLL} gives
\begin{eqnarray*}
\bigl( \alpha_{L}(\vec{\omega},e) - \beta_{L}(\vec{\omega},e) \bigl) \cdot |e| & = & \frac{\lambda_{1}+\lambda_{2}}{2}.
\end{eqnarray*}
Recalling that $\alpha_{L}(\vec{\omega},e) = \overline{\alpha}_{L}(\vec{\omega},e) + m_{j}$ then gives:
\begin{eqnarray*}
m_{j} & = & \beta_{L}(\vec{\omega},e) - \overline{\alpha}_{L}(\vec{\omega},e) + \frac{\lambda_{1}+\lambda_{2}}{2|e|}.
\end{eqnarray*}

On the other hand, if $e=(b_{i},a_{j})$, we (similarly) get:
\begin{eqnarray*}
n_{i} & = & \alpha_{L}(\vec{\omega},e) - \overline{\beta}_{L}(\vec{\omega},e) + \frac{\lambda_{1}+\lambda_{2}}{2|e|}.
\end{eqnarray*}

Hence the solution vector $\vec{\omega}=(m_{1}, \ldots, m_{q}, n_{1}, \ldots, n_{t}) \in N_{q,t}$ to system (\ref{SYSTWO})
satisfies the generic algorithm of Definition \ref{GENALG} with $\delta = (\lambda_{1}+\lambda_{2})/2$. In particular,
by applying Lemma \ref{GALEM} (b), we conclude that the solution vector $\vec{\omega} \in N_{q,t}$ is uniquely
determined and must be the generic load vector $\vec{\omega}_{G}$ assigned to the $(q,t)$-simplex $D$ by the generic algorithm.
Moreover, by Lemma \ref{GALEM} (b) and
Theorem \ref{FORMULA}, in conjunction with the computation in the latter part of the proof of Theorem \ref{UPPERBOUND}
(with $T$ replaced by $T_{D}$ in the obvious way), we conclude:
\[
\frac{\lambda_{1}+\lambda_{2}}{2} = \delta = \Biggl\{ \sum\limits_{e \in E(T_{D})} |e|^{-1} \Biggl\}^{-1} = \gamma_{D} (\vec{\omega}_{G}).
\]
Appealing to Lagrange's (Multiplier) Theorem completes the proof.
\end{proof}

Let $D(\vec{\omega})$ be a normalized $(q,t)$-simplex in a finite metric tree and let $T_{D} \subseteq T$ be the minimal
subtree generated by the vertices of $D$. If $D$ is not generically labeled there are two ways we can prune the minimal
subtree $T_{D}$ that lead (after a finite number of steps) to a generically labeled normalized
$(q^{\prime},t^{\prime})$-simplex $D_{\ast}(\vec{\omega}_{\ast})$ in a modified finite metric tree $(T_{\ast},d)$
with a smaller simplex gap: $\gamma_{D}(\vec{\omega}) > \gamma_{D_{\ast}}(\vec{\omega}_{\ast})$. These pruning
operations are described in the following lemma.

\begin{lem}\label{REDUCE}
Let $(T,d)$ be a finite metric tree.
Let $D(\vec{\omega})=[a_{j}(m_{j});b_{i}(n_{i})]_{q,t}$ be a normalized $(q,t)$-simplex in $T$.
Let $T_{D}$ denoted the minimal subtree of $T$ generated by the vertices of $D$.
Suppose $e_{\ast} = (x,y)$ is
an oriented edge in $T_{D}$ with one of the following two properties:
\begin{enumerate}
\item[(a)] $x,y \in D$ and $x,y$ have the same simplex parity, or

\item[(b)] $x \notin D$ or $y \notin D$.
\end{enumerate}
Form a new normalized $(q^{\prime},t^{\prime})$-simplex ${D_{\ast}}(\vec{{\omega}}_{\ast})$ and corresponding minimal subtree $T_{{D_{\ast}}}$ ---
within a modified tree $({T}_{\ast},d)$ --- by identifying vertex $x$ with vertex $y$ and by adding the simplex weights
associated with $x$ and $y$, if any. (In other words, to form ${T}_{\ast}$, delete the oriented edge ${e}_{\ast}$ from $T$ and
paste. And so on.) Then, recalling the more precise notation introduced after Definition \ref{PARTSUM}, we have:
\begin{eqnarray*}
\gamma_{{D_{\ast}}}(\vec{{\omega}}_{\ast}) & = &
\gamma_{D}(\vec{\omega}) - (\alpha_{L}(D, \vec{\omega}, {e}_{\ast}) - \beta_{L}(D, \vec{\omega}, {e}_{\ast}))^{2} \cdot |{e}_{\ast}| \\
& = & \sum\limits_{e \in E(T_{D}) \setminus \{ {e}_{\ast} \}} (\alpha_{L}(D, \vec{\omega}, e) - \beta_{L}(D, \vec{\omega}, e))^{2} \cdot |e|.
\end{eqnarray*}
In particular, we see that $\gamma_{D}(\vec{\omega}) > \gamma_{{D}_{\ast}}(\vec{{\omega}}_{\ast})$.
\end{lem}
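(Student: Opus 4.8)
\medskip
\noindent\textbf{Proof proposal.}\ The plan is to push everything through the edge-by-edge gap decomposition of Theorem \ref{FORMULA}, which reduces the statement to tracking what the contraction of $e_*$ does to the partition sums of Definition \ref{PARTSUM}. First I would use Theorem \ref{FORMULA} to write $\gamma_D(\vec\omega)=\sum_{e\in E(T_D)}(\alpha_L(D,\vec\omega,e)-\beta_L(D,\vec\omega,e))^2\cdot|e|$ and single out the term $\gamma_D^{(e_*)}(\vec\omega)=(\alpha_L(D,\vec\omega,e_*)-\beta_L(D,\vec\omega,e_*))^2\cdot|e_*|$ attached to $e_*$. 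Contracting $e_*$ in the finite tree $T_D$ deletes $e_*$ but leaves every other edge intact — since $T_D$ has no cycles, no parallel edges can appear — so there is a canonical length-preserving bijection $e\leftrightarrow e'$ between $E(T_D)\setminus\{e_*\}$ and $E(T_{D_*})$. Since $\vec\omega_*$ is again a normalized load vector (identifying $x$ with $y$ and pooling their weights keeps both coordinate-sums equal to $1$), Theorem \ref{FORMULA} applies equally to $D_*(\vec\omega_*)$ in $(T_*,d)$; and because $\gamma_D(\vec\omega)=\mathfrak{R}_D(\vec\omega)-\mathfrak{L}_D(\vec\omega)$ is defined in Definition \ref{SGAP} with no reference to orientation, the edge-sum supplied by Theorem \ref{FORMULA} is independent of the root leaf, so $T_{D_*}$ may be oriented by any convenient leaf. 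The whole lemma then reduces to the claim that $\alpha_L(D_*,\vec\omega_*,e')=\alpha_L(D,\vec\omega,e)$ and $\beta_L(D_*,\vec\omega_*,e')=\beta_L(D,\vec\omega,e)$ for every $e\in E(T_D)\setminus\{e_*\}$: summing the resulting equalities of squared differences over all $e\neq e_*$ yields the second displayed formula, and subtracting it from $\gamma_D(\vec\omega)$ yields the first.

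To establish this matching I would fix $e\neq e_*$ and let $z$ denote the vertex of $T_{D_*}$ obtained by identifying the endpoints $x,y$ of $e_*$. Since $x$ and $y$ are adjacent in $T_D$, deleting the distinct edge $e$ cannot separate them, so $x$ and $y$ lie in the same one of the two components of $T_D\setminus\{e\}$; that is, both are in $L(e)$ or both are in $R(e)$. Hence the vertex bipartition of $T_{D_*}$ cut out by $e'$ is obtained from the one of $T_D$ cut out by $e$ simply by replacing the same-side pair $\{x,y\}$ by the single vertex $z$, every other vertex untouched. What then remains is weight bookkeeping, and this is where hypotheses (a), (b) first enter so that $z$'s parity is unambiguous: under (a), $x,y$ both lie in $D$ with one common simplex parity, so $z\in D_*$ carries that parity and simplex weight equal to the sum of those of $x$ and $y$; under (b) at most one of $x,y$ lies in $D$, so $z\in D_*$ precisely when one of them does, with its parity and weight (and $z\notin D_*$ when neither does). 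In every case the contribution of $z$ to any one of the four partition sums of $\vec\omega_*$ along $e'$ equals the combined contribution of $x$ and $y$ to the matching partition sum of $\vec\omega$ along $e$, while all other simplex vertices contribute identically on either side; since $x$, $y$ (hence $z$) sit on a single side of $e$ (resp.\ $e'$), this yields $\alpha_L(D_*,\vec\omega_*,e')=\alpha_L(D,\vec\omega,e)$ and $\beta_L(D_*,\vec\omega_*,e')=\beta_L(D,\vec\omega,e)$, completing the argument for the two formulas.

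It then remains to obtain the strict inequality. By the first formula, $\gamma_D(\vec\omega)-\gamma_{D_*}(\vec\omega_*)=\gamma_D^{(e_*)}(\vec\omega)=(\alpha_L(D,\vec\omega,e_*)-\beta_L(D,\vec\omega,e_*))^2\cdot|e_*|$, and since $|e_*|>0$ always, proving $\gamma_D(\vec\omega)>\gamma_{D_*}(\vec\omega_*)$ comes down to the single assertion $\alpha_L(D,\vec\omega,e_*)\neq\beta_L(D,\vec\omega,e_*)$ (equivalently $\alpha_R(D,\vec\omega,e_*)\neq\beta_R(D,\vec\omega,e_*)$). This is the one point at which the structural hypotheses on $e_*$ must really be exploited — that the left endpoint $x\in L(e_*)$ and the right endpoint $y\in R(e_*)$ carry positive weight of a common parity in case (a), or that the non-simplex endpoint forces an imbalance of some parity across $e_*$ in case (b) — and I expect pinning down this weight imbalance, rather than the partition-sum matching of the second paragraph (which is essentially bookkeeping once Theorem \ref{FORMULA} and its orientation-independence are in hand), to be the delicate step of the proof.
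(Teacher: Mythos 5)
Your derivation of the two displayed formulas is in substance the paper's own argument: decompose $\gamma_D$ edge-by-edge via Theorem \ref{FORMULA}, observe that for every edge $e \neq e_*$ the two endpoints $x,y$ of $e_*$ sit on the same side of $e$ (so that collapsing $e_*$ shifts no weight across $e$), conclude that all four partition sums along $e$ are invariant, and sum. The paper phrases the same observation as ``checking four simple cases,'' your one-line topological remark (``deleting the distinct edge $e$ cannot separate $x$ and $y$'') is a tidier way to see it, and your remark that the edge-sum formula is orientation-independent so $T_{D_*}$ may be re-rooted at will is a point the paper glosses over but which is genuinely needed and true. So that part is fine.

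You were right to flag the ``In particular'' strict inequality as the delicate step and to refuse to wave at it, but your expectation that (a) or (b) forces $\alpha_L(D,\vec\omega,e_*)\neq\beta_L(D,\vec\omega,e_*)$ does not pan out, and no such argument exists because the claim is false. Take $T$ to be the unweighted path on five vertices $v_1$--$v_2$--$v_3$--$v_4$--$v_5$, rooted at $v_1$, with the normalized $(2,2)$-simplex $D(\vec\omega)$ given by $a_1=v_1$, $a_2=v_5$, $b_1=v_2$, $b_2=v_4$ and all four weights equal to $\tfrac12$. Then $v_3\notin D$, so $e_*=(v_3,v_2)$ satisfies condition (b), but $\alpha_L(D,\vec\omega,e_*)=m_2=\tfrac12=n_2=\beta_L(D,\vec\omega,e_*)$; one checks directly that $\gamma_D(\vec\omega)=\tfrac12=\gamma_{D_*}(\vec\omega_*)$, so $\gamma_D(\vec\omega)>\gamma_{D_*}(\vec\omega_*)$ fails. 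What the two formulas actually give is only $\gamma_D(\vec\omega)\geq\gamma_{D_*}(\vec\omega_*)$, and indeed the paper's own proof stops after deriving the formulas and never supplies a reason for strictness — the ``In particular'' is an unproved overstatement. This turns out to be harmless for the downstream use in Theorem \ref{MAIN}: when a reduction step is applied, $E(T_{D_*})$ is a proper subset of $E(T_D)$, so the later inequality $\bigl\{\sum_{e\in E(T_{D_*})}|e|^{-1}\bigr\}^{-1}>\bigl\{\sum_{e\in E(T_D)}|e|^{-1}\bigr\}^{-1}$ is strict, and the chain of inequalities (and the characterization of equality) goes through with $\geq$ in the first link. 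If you want to salvage a strict statement at the level of the lemma you would need an extra hypothesis (for instance, that $e_*$ is a pendant edge of $T_D$, in which case one endpoint is a leaf in $D$ and the other is not, forcing the imbalance), but as written the lemma's final sentence should read $\gamma_D(\vec\omega)\geq\gamma_{D_*}(\vec\omega_*)$.
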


\begin{proof}
Assume condition (a) or (b) holds. Let $e$ be an oriented edge in $T_{D}$ such that $e \not= {e}_{\ast}$.
Obviously $e$ is an edge in $T_{{D}_{\ast}}$ too. Moreover, all edges in $T_{{D}_{\ast}}$ arise this way. Checking
four simple cases shows that the left (and right) partition sums for $e$ are invariant under the identification
$x \equiv y$. That is to say, $\alpha_{L}(D,\vec{\omega},e) = \alpha_{L}({D}_{\ast}, \vec{{\omega}}_{\ast}, e)$ and
$\beta_{L}(D,\vec{\omega},e) = \beta_{L}({D}_{\ast}, \vec{{\omega}}_{\ast}, e)$.
[There are four cases because (a) or (b) might hold and because $e$ is either to the
left of $x$ or to the right of $y$. Whatever the case, when we delete the oriented edge ${e}_{\ast}$, no simplex weight
shifts from the left to the right of $e$, or \textit{vice versa}.] So $\gamma_{D}^{(e)}(\vec{\omega}) =
\gamma_{{D}_{\ast}}^{(e)}(\vec{\omega}_{\ast})$. Now
apply Theorem \ref{FORMULA} to get the formulas in the statement of the lemma.
\end{proof}

\begin{thm}\label{MAIN}
Let $(T,d)$ be a finite metric tree.
For all normalized $(q,t)$-simplexes
$D(\vec{\omega})=[a_{j}(m_{j});b_{i}(n_{i})]_{q,t}$ in $T$ we have:
\begin{eqnarray}\label{TWENTY}
\sum\limits_{j_{1} < j_{2}} m_{j_{1}}m_{j_{2}} d(a_{j_{1}},a_{j_{2}}) +
\sum\limits_{i_{1} < i_{2}} n_{i_{1}}n_{i_{2}} d(b_{i_{1}},b_{i_{2}}) +
\Biggl\{ \sum\limits_{e \in E(T)} |e|^{-1} \Biggl\}^{-1} & ~ & ~ \\
\leq \sum\limits_{i,j} m_{j}n_{i} d(a_{j},b_{i}). & ~ & ~ \nonumber
\end{eqnarray}

Moreover, we have equality in (\ref{TWENTY}) if and only if $D=T$ (as sets) and $D$ is both generically labeled and
generically weighted.
\end{thm}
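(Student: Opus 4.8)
The plan is to deduce (\ref{TWENTY}) from the single remaining estimate on the $1$-negative type gap, namely that
\[
\gamma_{D}(\vec{\omega}) \;\geq\; \Biggl\{ \sum_{e \in E(T)} |e|^{-1} \Biggr\}^{-1} \;=:\; C_{T}
\]
for \emph{every} normalized $(q,t)$-simplex $D(\vec{\omega})$ in $(T,d)$. By Definition \ref{SGAP} this inequality is literally (\ref{TWENTY}), and it says $\Gamma_{T} \geq C_{T}$; since Theorem \ref{UPPERBOUND} already gives $\Gamma_{T} \leq C_{T}$ (witnessed by the generically labeled and generically weighted simplex on all of $T$), it also pins down $\Gamma_{T} = C_{T}$. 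So the whole argument reduces to the displayed lower bound, tracked carefully enough to read off the cases of equality.

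First I would prune $D$ down to a generically labeled simplex. If $D$ is not generically labeled, then by Definition \ref{GENERIC} either some vertex of $T_{D}$ fails to be a vertex of $D$ --- and then any edge of $T_{D}$ incident to such a vertex satisfies hypothesis (b) of Lemma \ref{REDUCE} --- or else $D = T_{D}$ as sets but some edge $(x,y) \in E(T_{D})$ joins two vertices of the same simplex parity, which is hypothesis (a) of Lemma \ref{REDUCE}. In either case Lemma \ref{REDUCE} yields a normalized simplex $D_{\ast}(\vec{\omega}_{\ast})$ in the contracted finite metric tree $(T_{\ast},d)$ (obtained by deleting the chosen edge $e_{\ast}$ and pasting its endpoints, so $|T_{\ast}| = |T| - 1$ while every surviving edge retains its length) with $\gamma_{D}(\vec{\omega}) > \gamma_{D_{\ast}}(\vec{\omega}_{\ast})$. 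Because the vertex count strictly drops, iterating terminates after finitely many steps at a normalized simplex $D'(\vec{\omega}')$ in a finite metric tree $(T',d)$ --- where $T'$ is $T$ with a (possibly empty) set $S \subseteq E(T_{D})$ of edges contracted --- with $D'$ now generically labeled and $\gamma_{D}(\vec{\omega}) \geq \gamma_{D'}(\vec{\omega}')$, the inequality being strict unless $S = \emptyset$, in which case $T' = T$ and $D' = D$.

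Next I would apply Theorem \ref{GLMIN} to the generically labeled simplex $D'$ inside $(T',d)$. Since $\gamma_{D'}$ and $\gamma_{D'}^{\times}$ agree on the surface of normalized load vectors, this gives
\[
\gamma_{D'}(\vec{\omega}') \;\geq\; \min_{\vec{\nu} \in N_{q',t'}} \gamma_{D'}^{\times}(\vec{\nu}) \;=\; \Biggl\{ \sum_{e \in E(T'_{D'})} |e|^{-1} \Biggr\}^{-1},
\]
with equality if and only if $\vec{\omega}'$ is the generic load vector $\vec{\omega}_{G}$ for $D'$. Finally, $E(T'_{D'}) \subseteq E(T') \subseteq E(T)$ with all edge lengths unchanged under the contractions, so $\sum_{e \in E(T'_{D'})} |e|^{-1} \leq \sum_{e \in E(T)} |e|^{-1}$, making the last displayed quantity at least $C_{T}$, with equality precisely when $E(T'_{D'}) = E(T)$. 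Chaining the three inequalities yields $\gamma_{D}(\vec{\omega}) \geq C_{T}$, which is (\ref{TWENTY}); and equality throughout forces $S = \emptyset$ (so $D$ is generically labeled and $T' = T$), $E(T_{D}) = E(T)$ (so $T_{D} = T$, whence $D = T$ as sets because $D$ is generically labeled), and $\vec{\omega} = \vec{\omega}_{G}$ (so $D$ is generically weighted). Conversely, for the generically labeled and generically weighted simplex on $D = T$, Theorem \ref{UPPERBOUND} (equivalently Theorem \ref{GLMIN}) already gives $\gamma_{D}(\vec{\omega}_{G}) = C_{T}$, i.e.\ equality in (\ref{TWENTY}). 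That is exactly the asserted characterization.

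I expect the main obstacle to be bookkeeping in the pruning step rather than any genuinely hard estimate: one must check that at each contraction $D_{\ast}(\vec{\omega}_{\ast})$ is again a bona fide normalized simplex in a bona fide finite metric tree with strictly fewer vertices, that the ambient edge set and the edge lengths evolve exactly as described so that $E(T) \supseteq E(T') \supseteq E(T'_{D'})$ holds with lengths preserved, and --- a small but essential point --- that ``$D$ not generically labeled'' always supplies an edge to which Lemma \ref{REDUCE} applies. Once those are in hand, Theorems \ref{GLMIN} and \ref{UPPERBOUND} supply everything else, including the equality analysis.
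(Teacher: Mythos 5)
Your argument mirrors the paper's own proof: prune via Lemma \ref{REDUCE} to a generically labeled simplex (strictly decreasing the gap at each contraction), invoke Theorem \ref{GLMIN} on the resulting simplex, and then use monotonicity of $\bigl\{\sum |e|^{-1}\bigr\}^{-1}$ with respect to edge-set inclusion to finish. The only difference is that you track the equality cases somewhat more explicitly step by step, but the decomposition and the key lemmas are the same.
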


\begin{proof}
If $D(\vec{\omega})$ is generically labeled, then (\ref{TWENTY}) holds by Theorem \ref{GLMIN} because
\[
\Biggl\{ \sum\limits_{e \in E(T)} |e|^{-1} \Biggl\}^{-1} \leq
\Biggl\{ \sum\limits_{e \in E(T_{D})} |e|^{-1} \Biggl\}^{-1} = \inf\limits_{N_{q,t}} \gamma_{D} \leq \gamma_{D}(\vec{\omega}).
\]

If $D(\vec{\omega})$ is not generically labeled, we may apply Lemma \ref{REDUCE} a finite number of times to produce a
possibly smaller normalized
$(q^{\prime},t^{\prime})$-simplex ${D}_{\ast}(\vec{{\omega}}_{\ast})$ in a modified tree $({T}_{\ast},d)$
that is generically labeled and which satisfies (by Lemma \ref{REDUCE} in the first instance and Theorem \ref{GLMIN} in the
second instance):
\begin{eqnarray*}
\gamma_{D}(\vec{\omega}) & > & \gamma_{{D}_{\ast}}(\vec{{\omega}}_{\ast}) \\
& \geq & \Biggl\{ \sum\limits_{e \in E(T_{{D}_{\ast}})} |e|^{-1} \Biggl\}^{-1} \\
& \geq & \Biggl\{ \sum\limits_{e \in E(T_{D})} |e|^{-1} \Biggl\}^{-1} \\
& \geq & \Biggl\{ \sum\limits_{e \in E(T)} |e|^{-1} \Biggl\}^{-1}.
\end{eqnarray*}

From these two cases --- generically labeled, or not --- we conclude that (\ref{TWENTY}) holds in general.
Moreover, the characterization of equality in (\ref{TWENTY}) is clear from the statement and proof of Theorem
\ref{GLMIN}, together with the observation that the minimum
\[
\min\limits_{E \subseteq E(T)} \Biggl\{ \sum\limits_{e \in E} |e|^{-1} \Biggl\}^{-1}
\]
is uniquely attained when $E=E(T)$.
\end{proof}

As an automatic corollary to Theorem \ref{MAIN} we can compute the $1$-negative type gap of any finite
metric tree \textit{exactly}.

\begin{cor}\label{GAPFORMULA}
Let $(T,d)$ be a finite metric tree.
Let $\Gamma_{T} = \inf\limits_{D(\vec{\omega})} \gamma_{D}(\vec{\omega})$ denote the
$1$-negative type gap of $(T,d)$. Then:
\[
\Gamma_{T} = \Biggl\{ \sum\limits_{e \in E(T)} |e|^{-1} \Biggl\}^{-1}.
\]
\end{cor}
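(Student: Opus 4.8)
The plan is to read off the exact value of $\Gamma_{T}$ by pinching it between two bounds that have already been established, so that the corollary is essentially immediate once Theorems \ref{MAIN} and \ref{UPPERBOUND} are in hand. First I would invoke Theorem \ref{MAIN}: rewriting inequality (\ref{TWENTY}) in the form
\[
\gamma_{D}(\vec{\omega}) \;=\; \mathfrak{R}_{D}(\vec{\omega}) - \mathfrak{L}_{D}(\vec{\omega}) \;\geq\; \Biggl\{ \sum\limits_{e \in E(T)} |e|^{-1} \Biggr\}^{-1},
\]
valid for \emph{every} normalized $(q,t)$-simplex $D(\vec{\omega})$ in $T$, and then taking the infimum over all such simplexes, one obtains at once that $\Gamma_{T} = \inf_{D(\vec{\omega})} \gamma_{D}(\vec{\omega}) \geq \{ \sum_{e \in E(T)} |e|^{-1} \}^{-1}$.

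Second, Theorem \ref{UPPERBOUND} supplies the reverse inequality $\Gamma_{T} \leq \{ \sum_{e \in E(T)} |e|^{-1} \}^{-1}$; indeed the generically labeled, generically weighted normalized simplex $D(\vec{\omega}_{G})$ exhibited in its proof has simplex gap exactly equal to this quantity. Combining the two inequalities forces $\Gamma_{T} = \{ \sum_{e \in E(T)} |e|^{-1} \}^{-1}$, which is precisely the assertion of the corollary.

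Finally, although it is not strictly needed for the statement, I would note that the infimum defining $\Gamma_{T}$ is in fact \emph{attained}, and, by the equality clause of Theorem \ref{MAIN}, attained exactly at the (essentially unique) normalized simplex $D(\vec{\omega}_{G})$ with $D = T$ as sets that is both generically labeled and generically weighted. There is no genuine obstacle at this stage: all of the substantive work --- the simplex-gap decomposition of Theorem \ref{FORMULA}, the Lagrange-multiplier minimization of Theorem \ref{GLMIN}, and the pruning argument of Lemma \ref{REDUCE} feeding into Theorem \ref{MAIN} --- has already been carried out, and the corollary is just the observation that the resulting upper and lower bounds for $\Gamma_{T}$ coincide.
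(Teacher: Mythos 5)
Your proposal is correct and takes essentially the same approach as the paper: the paper presents the corollary as an automatic consequence of Theorem \ref{MAIN}, which supplies both the lower bound (the inequality (\ref{TWENTY})) and, via its equality clause for the generically labeled and weighted simplex, the upper bound. Citing Theorem \ref{UPPERBOUND} for the upper half of the squeeze is an equally valid and equivalent route, since that theorem's proof is precisely the generic-algorithm computation that the equality clause of Theorem \ref{MAIN} refers back to.
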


Notice that the constant $\Gamma_{T}$ in Corollary \ref{GAPFORMULA} is independent of the internal geometry of the tree $T$
and depends only upon the unordered distribution of the tree's edge weights. By way of analogy;
the situation we are encountering in Corollary \ref{GAPFORMULA} is to be compared to having a
box of matches of unequal lengths. No matter how we construct a metric tree $T$ by using all
of the matches in the box, we invariably get the same value for the $1$-negative type gap $\Gamma_{T}$.
The same phenomenon applies to the inequalities (\ref{TWENTY}) of Theorem \ref{MAIN}: they are
independent of the particular finite metric tree's internal geometry. This seems remarkable.

It is also the case that the above formula for $\Gamma_{T}$ holds for any countable metric tree $(T,d)$.
Simply note that since trees are connected (by definition),
the minimal subtree generated by any finite
subset of a countable tree $T$ must be finite. This then allows one to invoke Corollary \ref{GAPFORMULA} and make a simple
limiting argument. No proof is therefore necessary for the following corollary.

\begin{cor}\label{COUNTABLE}
Let $(T,d)$ be a countable metric tree. Then the $1$-negative type gap $\Gamma_{T}$ of $(T,d)$ is given by the
formula
\[
\Gamma_{T} = \Biggl\{ \sum\limits_{e \in E(T)} |e|^{-1} \Biggl\}^{-1}
\]
where it is understood that $\Gamma_{T}$ is taken to be zero if the series in the parentheses diverges.
\end{cor}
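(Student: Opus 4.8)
The plan is to treat this as a routine passage to the limit from the finite case (Corollary~\ref{GAPFORMULA}). Write $S = \sum_{e \in E(T)} |e|^{-1} \in (0,\infty]$ and adopt the stated convention $S^{-1} = 0$ when $S = \infty$; I would then establish the two inequalities $\Gamma_{T} \ge S^{-1}$ and $\Gamma_{T} \le S^{-1}$ in turn.

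For the lower bound I would fix an arbitrary normalized $(q,t)$-simplex $D(\vec{\omega})$ in $T$ and pass to the minimal subtree $T_{D}$ it generates. The key point is that, since $T$ is connected and $D$ has only finitely many vertices, $T_{D}$ is a \emph{finite} metric tree, and every ingredient of $\gamma_{D}(\vec{\omega})$ --- the partition sums of Definition~\ref{PARTSUM} and the edge lengths $|e|$ --- is computed entirely inside $T_{D}$. Hence Theorem~\ref{FORMULA} applies verbatim within $T_{D}$, and Corollary~\ref{GAPFORMULA} applied to $(T_{D},d)$ gives $\gamma_{D}(\vec{\omega}) \ge \{\sum_{e \in E(T_{D})} |e|^{-1}\}^{-1}$. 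Since $T_{D}$ is a subtree of $T$ (the span of a finite vertex set in a tree is geodesically convex, so no vertex of $T$ lies strictly between the endpoints of an edge of $T_{D}$), we have $E(T_{D}) \subseteq E(T)$ and therefore $\sum_{e \in E(T_{D})} |e|^{-1} \le S$, whence $\gamma_{D}(\vec{\omega}) \ge S^{-1}$. Taking the infimum over all normalized simplexes in $T$ yields $\Gamma_{T} \ge S^{-1}$.

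For the upper bound I would exhaust $T$ by finite subtrees. Enumerating the vertices $V(T) = \{v_{1}, v_{2}, \ldots\}$, let $T_{n}$ be the minimal subtree generated by $\{v_{1}, \ldots, v_{n}\}$; each $T_{n}$ is a finite metric tree, $T_{n} \subseteq T_{n+1}$, and $\bigcup_{n} T_{n} = T$. By geodesic convexity again $E(T_{n}) \subseteq E(T_{n+1}) \subseteq E(T)$, and every edge of $T$ lies in $E(T_{n})$ once $n$ exceeds the indices of its two endpoints; so $E(T_{n}) \uparrow E(T)$ and, by monotone convergence for series of positive terms, $S_{n} := \sum_{e \in E(T_{n})} |e|^{-1} \uparrow S$. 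By Corollary~\ref{GAPFORMULA} (with attainment as in Theorem~\ref{MAIN}), the value $S_{n}^{-1} = \Gamma_{T_{n}}$ is realized by the generically labeled and generically weighted simplex $D_{n}(\vec{\omega}_{n})$ in $T_{n}$, which --- since $T_{D_{n}} = T_{n} \subseteq T$ --- is also a legitimate normalized simplex in $T$ with the same gap evaluation. Hence $\Gamma_{T} \le \gamma_{D_{n}}(\vec{\omega}_{n}) = S_{n}^{-1}$ for every $n$, and letting $n \to \infty$ gives $\Gamma_{T} \le S^{-1}$ (with $S^{-1} = 0$ when $S = \infty$). Combining the two bounds gives $\Gamma_{T} = S^{-1}$, which is precisely the asserted formula together with its divergence convention.

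I do not expect a genuine obstacle: the authors are correct that a limiting argument suffices. The two points that deserve a moment's care are (i) honoring the convention $S^{-1} = 0$ in the divergent case --- automatic, since $\Gamma_{T} \ge 0$ while $S_{n}^{-1} \to 0$ --- and (ii) the choice of exhaustion, since a countable tree need not be locally finite and so metric balls of finite radius may be infinite; exhausting instead by minimal subtrees spanned by finite vertex sets repairs this.
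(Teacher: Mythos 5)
Your proof is correct and is exactly the ``simple limiting argument'' the paper alludes to in lieu of a written proof: the lower bound uses that every normalized simplex generates a finite minimal subtree to which Corollary~\ref{GAPFORMULA} applies, and the upper bound exhausts $T$ by finite minimal subtrees whose generic simplexes realize gaps converging to $S^{-1}$. Your remark that one should exhaust by minimal subtrees of finite vertex sets rather than by metric balls (to sidestep any failure of local finiteness) is a sensible point of care, and your monotone-convergence step correctly handles the divergent case $S=\infty$.
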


Corollary \ref{COUNTABLE} makes it clear that, given any $\Gamma \geq 0$, we can construct a countable metric tree $(T,d)$
whose $1$-negative type gap $\Gamma_{T} = \Gamma$.
The simplest way to do this is to consider an internal node, denoted $0$,
surrounded by countably many leaves, denoted $n$ where $n \in \mathbb{N}$. Using Corollary \ref{COUNTABLE} we can then
drive the $1$-negative type gap of this star with $\aleph_{0}$
leaves by varying the edge weights $d(0,n)$, $n \in \mathbb{N}$,
accordingly. In summary, we have the following.

\begin{cor}
For each non negative real number $\Gamma \geq 0$ there exists a countable metric tree $(T,d)$ such that
the $1$-negative type gap $\Gamma_{T}$ of $(T,d)$ equals $\Gamma$.
\end{cor}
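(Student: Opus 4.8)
The plan is to exhibit one explicit family of countable metric trees --- the ``countable stars'' sketched in the paragraph preceding the statement --- and to read off the $1$-negative type gap directly from Corollary \ref{COUNTABLE}. Fix $\Gamma \geq 0$. I would take $T$ to have vertex set $\{0\} \cup \mathbb{N}$, with one edge joining the central vertex $0$ to each leaf $n \in \mathbb{N}$ and no other edges. This graph is connected and acyclic, hence a countable tree, and $E(T) = \{(0,n) : n \in \mathbb{N}\}$, so $\sum_{e \in E(T)} |e|^{-1} = \sum_{n \in \mathbb{N}} d(0,n)^{-1}$. Everything then reduces to choosing the positive edge weights $d(0,n)$ so that this series attains the prescribed value.

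Concretely, for $\Gamma > 0$ I would set $d(0,n) = \Gamma \cdot 2^{n}$, so that $\sum_{n=1}^{\infty} d(0,n)^{-1} = \Gamma^{-1} \sum_{n=1}^{\infty} 2^{-n} = \Gamma^{-1}$, whence Corollary \ref{COUNTABLE} gives $\Gamma_{T} = \left( \Gamma^{-1} \right)^{-1} = \Gamma$; any summable positive sequence with reciprocal-sum $\Gamma^{-1}$ would do equally well, the geometric choice just being convenient. For $\Gamma = 0$ I would instead equip the countable star with its ordinary path metric, $d(0,n) = 1$ for every $n$, so that $\sum_{n=1}^{\infty} d(0,n)^{-1}$ diverges, and Corollary \ref{COUNTABLE} --- with its stated convention that $\Gamma_{T} := 0$ when the series diverges --- yields $\Gamma_{T} = 0$. (Each $\Gamma > 0$ could alternatively be realized by the degenerate single-edge tree of length $\Gamma$ via Corollary \ref{GAPFORMULA}, but the star handles all $\Gamma$ uniformly and matches the discussion preceding the statement.)

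I do not anticipate a genuine obstacle: the content of the statement is entirely carried by Corollary \ref{COUNTABLE}, and what remains is bookkeeping rather than substance. One should check that the edge-weighted path metric on the countable star is a bona fide metric --- immediate, since $d(m,n) = d(0,m) + d(0,n)$ for distinct non-central $m,n$ and the triangle inequality is then automatic --- and that Corollary \ref{COUNTABLE} applies verbatim, which it does since that corollary is stated for arbitrary countable metric trees. Thus the proof amounts to the above choice of weights together with an elementary geometric-series computation.
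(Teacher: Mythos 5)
Your proof is correct and follows exactly the approach indicated in the paper: a countable star rooted at an internal node $0$ with leaves $n \in \mathbb{N}$, with edge weights $d(0,n)$ tuned so that $\sum_n d(0,n)^{-1}$ equals $\Gamma^{-1}$ (or diverges when $\Gamma = 0$), after which Corollary \ref{COUNTABLE} gives $\Gamma_{T} = \Gamma$. The only difference is that you supply an explicit geometric choice of weights, which the paper leaves implicit.
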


We now return to the context of finite metric trees as they are the primary objects of interest in this paper.
In particular, Theorem \ref{MAIN} and Corollary \ref{GAPFORMULA} are seen to be key results of this paper. The inequalities
(\ref{TWENTY}) of Theorem \ref{MAIN} can be rephrased using Theorem \ref{GRUNT} and a scaling argument as follows.

\begin{thm}\label{REFORM}
Let $(T,d)$ be a finite metric tree.
Then for all natural numbers $n \geq 2$, all finite subsets $\{ x_{1}, \ldots , x_{n} \} \subseteq T$, and all
choices of real numbers $\eta_{1}, \ldots , \eta_{n}$ with $\eta_{1} + \cdots + \eta_{n} = 0$, we have
\begin{eqnarray}\label{TWENTYONE}
\frac{\Gamma_{T}}{2} \cdot \Biggl( \sum\limits_{\ell=1}^{n} |\eta_{\ell}| \Biggl)^{2}
+ \sum\limits_{1 \leq i,j \leq n} d(x_{i},x_{j}) \eta_{i} \eta_{j} & \leq & 0
\end{eqnarray}
where $\Gamma_{T}=\Biggl\{ \sum\limits_{e \in E(T)} |e|^{-1} \Biggl\}^{-1}$.
\end{thm}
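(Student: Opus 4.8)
\medskip
\noindent\textbf{Proof proposal.}
The plan is to transcribe the normalized-simplex inequality (\ref{TWENTY}) of Theorem \ref{MAIN} back into the ``$\eta$-language'' of Definition \ref{NTGRDEF}(a) by reversing the correspondence used in the sketch proof of Theorem \ref{GRUNT}, and then to remove the normalization constraint by a single rescaling. Fix $n \geq 2$, distinct points $x_{1}, \ldots, x_{n} \in T$, and real numbers $\eta_{1}, \ldots, \eta_{n}$ with $\eta_{1} + \cdots + \eta_{n} = 0$. If every $\eta_{\ell}$ vanishes then (\ref{TWENTYONE}) reads $0 \leq 0$ and there is nothing to do, so assume otherwise. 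Discarding those indices $\ell$ with $\eta_{\ell} = 0$ (such vertices contribute nothing to either side of (\ref{TWENTYONE})) and relabelling, I may assume $\eta_{1}, \ldots, \eta_{q} > 0$ and $\eta_{q+1}, \ldots, \eta_{q+t} < 0$ with $q + t = n$ and $q, t \geq 1$; both blocks are nonempty because the $\eta_{\ell}$ sum to zero but are not all zero. Put $M = \sum_{j=1}^{q} \eta_{j} = -\sum_{k=q+1}^{n} \eta_{k} = \tfrac{1}{2} \sum_{\ell=1}^{n} |\eta_{\ell}| > 0$.

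Next I would form the normalized $(q,t)$-simplex demanded by Theorem \ref{MAIN}: put $a_{j} = x_{j}$, $m_{j} = \eta_{j}/M$ for $1 \leq j \leq q$, and $b_{i} = x_{n-i+1}$, $n_{i} = -\eta_{n-i+1}/M$ for $1 \leq i \leq t$. Since the $x_{\ell}$ are distinct and all these weights are strictly positive with $m_{1} + \cdots + m_{q} = 1 = n_{1} + \cdots + n_{t}$, this is a genuine normalized $(q,t)$-simplex $D(\vec{\omega})$ in $T$, so Theorem \ref{MAIN} applies to it verbatim. Writing $\widehat{m}_{j} = M m_{j} = \eta_{j}$ and $\widehat{n}_{i} = M n_{i} = -\eta_{n-i+1}$, and multiplying the resulting instance of (\ref{TWENTY}) through by $M^{2}$ while using the degree-$2$ homogeneity of the three sums in $\vec{\omega}$, I obtain
\[
\sum_{j_{1} < j_{2}} \widehat{m}_{j_{1}} \widehat{m}_{j_{2}} d(a_{j_{1}},a_{j_{2}}) + \sum_{i_{1} < i_{2}} \widehat{n}_{i_{1}} \widehat{n}_{i_{2}} d(b_{i_{1}},b_{i_{2}}) + M^{2}\Gamma_{T} \leq \sum_{i,j} \widehat{m}_{j} \widehat{n}_{i} d(a_{j},b_{i}),
\]
with $\Gamma_{T} = \{ \sum_{e \in E(T)} |e|^{-1} \}^{-1}$ the constant supplied by Corollary \ref{GAPFORMULA}.

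I would then invoke the identity (\ref{n5}) from the proof of Theorem \ref{GRUNT}, specialised to $p = 1$ and to the weights $\widehat{m}_{j} = \eta_{j}$, $\widehat{n}_{i} = -\eta_{n-i+1}$ (for which $\eta_{k} = \widehat{m}_{k} - \widehat{n}_{k}$, exactly as there). Because the diagonal distance terms $d(a_{j},a_{j})$ and $d(b_{i},b_{i})$ vanish, the unrestricted sums in (\ref{n5}) collapse to twice the corresponding ``$j_{1}<j_{2}$'' and ``$i_{1}<i_{2}$'' sums, so (\ref{n5}) says precisely that
\[
\sum_{1 \leq i,j \leq n} d(x_{i},x_{j})\,\eta_{i}\eta_{j} = 2 \sum_{i,j} \widehat{m}_{j} \widehat{n}_{i} d(a_{j},b_{i}) - 2 \sum_{j_{1}<j_{2}} \widehat{m}_{j_{1}} \widehat{m}_{j_{2}} d(a_{j_{1}},a_{j_{2}}) - 2 \sum_{i_{1}<i_{2}} \widehat{n}_{i_{1}} \widehat{n}_{i_{2}} d(b_{i_{1}},b_{i_{2}});
\]
that is, the right-hand side of the previous display minus its first two terms equals $-\tfrac{1}{2} \sum_{i,j} d(x_{i},x_{j})\eta_{i}\eta_{j}$. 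Hence $M^{2}\Gamma_{T} \leq -\tfrac{1}{2}\sum_{i,j} d(x_{i},x_{j})\eta_{i}\eta_{j}$; substituting $M^{2} = \tfrac{1}{4}(\sum_{\ell}|\eta_{\ell}|)^{2}$ and multiplying by $2$ gives (\ref{TWENTYONE}) exactly.

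There is no essential obstacle here: the substance lies entirely in Theorems \ref{MAIN} and \ref{GRUNT}, and what remains is bookkeeping. The delicate point is the scaling --- $\Gamma_{T}$ acquires a factor $M^{2}$ under $\vec{\omega} \mapsto M\vec{\omega}$ while $\sum d(x_{i},x_{j})\eta_{i}\eta_{j}$ is already homogeneous of degree $2$ in $\eta$, and there is a factor $2$ linking $\mathfrak{R}_{D}(\vec{\omega}) - \mathfrak{L}_{D}(\vec{\omega})$ to $-\tfrac{1}{2}\sum d(x_{i},x_{j})\eta_{i}\eta_{j}$ via (\ref{n5}) --- so carrying $M = \tfrac{1}{2}\sum_{\ell}|\eta_{\ell}|$ correctly through both is the whole game. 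One should also clear the degenerate cases (all $\eta_{\ell} = 0$, or some but not all zero) at the start, as above, and note that reversing the simplex/$\eta$ dictionary needs the $x_{\ell}$ to be distinct, which is part of the hypothesis.
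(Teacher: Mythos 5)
Your proposal is correct and follows essentially the same route as the paper: build the normalized $(q,t)$-simplex $D(\vec{\omega})$ from the $\eta$'s by splitting into positive and negative blocks, scale by $M=\tfrac{1}{2}\sum_{\ell}|\eta_{\ell}|$, invoke Theorem \ref{MAIN} together with identity (\ref{n5}) at $p=1$, and unwind the scaling. The only cosmetic differences are that you discard zero-weight indices at the outset (which is tidier, since load vectors require strictly positive entries) and that you multiply the instance of (\ref{TWENTY}) by $M^{2}$ rather than dividing $\sum d(x_i,x_j)\eta_i\eta_j$ by $\alpha^2$ as the paper does; these are equivalent.
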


\begin{proof} Fix a subset $\{x_{1}, \ldots, x_{n} \} \subseteq T$ ($n \geq 2$), together with
real numbers $\eta_{1}, \ldots , \eta_{n}$ such that $\eta_{1} +
\cdots + \eta_{n} = 0$. Without any loss of generality we may assume
that $(\eta_{1}, \ldots, \eta_{n}) \not= (0, \ldots, 0)$. By
relabelling (if necessary) we may assume there exist natural numbers
$q, t \in \mathbb{N}$ such that $q+t=n$, $\eta_{1}, \ldots, \eta_{q}
\geq 0$, and $\eta_{q+1}, \ldots, \eta_{n} < 0$.

As $\sum\limits_{j=1}^{q} \eta_{j} = - \sum\limits_{k=q+1}^{n} \eta_{k}$ we may define
$\alpha = \sum\limits_{j=1}^{q} \eta_{j} =
- \sum\limits_{k=q+1}^{n} \eta_{k}
= \frac{1}{2} \sum\limits_{\ell=1}^{n} |\eta_{\ell}| > 0$.

For $1 \leq j \leq q$, set $a_{j} = x_{j}$ and
$m_{j} = \eta_{j}/\alpha$. And
for $1 \leq i \leq t$, set $b_{i} = x_{n-i+1}$ and
$n_{i} = - \eta_{n-i+1}/\alpha$.
By construction, $D=[a_{j}(m_{j});b_{i}(n_{i})]_{q,t}$ is a normalized $(q,t)$-simplex.
Moreover, arguing as in the proof of Theorem 2.4 with exponent $p=1$, we see that
\begin{align*}
& \frac{1}{\alpha^{2}} \cdot \sum\limits_{1 \leq i,j \leq n} d(x_{i},x_{j}) \eta_{i} \eta_{j} = \\
& 2 \Biggl(\sum\limits_{1 \leq j_{1} < j_{2} \leq q}
               m_{j_{1}}m_{j_{2}} d(a_{j_{i}}, a_{j_{2}})
   + \sum\limits_{1 \leq i_{1}<i_{2} \leq t}
        n_{i_{1}}n_{i_{2}} d(b_{i_{1}}, b_{i_{2}})
   - \sum\limits_{j,i = 1}^{n}
        m_{j}n_{i} d(a_{j},b_{i}) \Biggl).
\end{align*}
Hence by Theorem 4.12,
\begin{eqnarray*}
\sum\limits_{1 \leq i,j \leq n} d(x_{i},x_{j}) \eta_{i} \eta_{j}
& \leq & -2 \alpha^{2} \cdot \Biggl\{ \sum\limits_{e \in E(T)} |e|^{-1} \Biggl\}^{-1}
 =  - \frac{\Gamma_{T}}{2} \cdot \Biggl( \sum\limits_{\ell=1}^{n} |\eta_{\ell}| \Biggl)^{2}. \\
\end{eqnarray*}
\end{proof}

\begin{rem}\label{altchar}
Because the constant $\Gamma_{T}$ appearing on the left hand side of (\ref{TWENTYONE}) is maximal we see
that Theorem \ref{REFORM} (alternately, Theorem \ref{MAIN}) provides the \textit{optimal enhancement}
of the $1$-negative type inequalities for finite metric trees.
Moreover, it is clear from the proof of Theorem \ref{GRUNT} (and, particularly, the equality (\ref{n5})
given in that proof)
that one may characterize the case of equality in (\ref{TWENTYONE}) \textit{directly} in terms of
$\{x_{1}, \ldots, x_{n}\}$ and $(\eta_{1}, \ldots, \eta_{n})$. Although this characterization is visibly apparent,
we leave the precise formulation to the interested reader.
\end{rem}

\section{Applications of the Negative Type Gap}
In this section we determine some applications of the negative type gap of a finite metric space $(X,d)$.
The main point is that if $|X| < \infty$
and if the $p$-negative type gap $\Gamma_{X,p}$ of $(X,d)$ is positive, then $(X,d)$ must have
strict $s$-negative type for some $s > p$.
In such a way, the negative type gap provides a new technique for obtaining lower bounds on the
maximal $p$-negative of certain finite metric spaces. We will illustrate this technique in the case of finite metric trees,
and then complete this section by constructing some basic examples to make a few final technical points.

This is perhaps a good time to recall that $p$-negative type holds on closed intervals of the form $[0,\wp]$.
Specifically, if $(X,d)$ is a metric space (finite or otherwise), then $(X,d)$ has $p$-negative type for all
$p$ such that $0 \leq p \leq \wp$, where $\wp = \max \{ p_{\ast}: (X,d)$ has $p_{\ast}$-negative type$\}$.
See, for example, Wells and Williams \cite{WW}.

We mentioned the following theorem at the end of Section $2$. The estimate (\ref{NEWFOUR}) derived in the
proof of this theorem is of independent interest and we will refer back to it later in this section.

\begin{thm}\label{LOWERBOUNDS}
Let $(X,d)$ be a finite metric space with $|X| \geq 3$.
Assume $(X,d)$ has a positive $1$-negative type gap $\Gamma_{X} = \Gamma_{X,1} > 0$.
Then there exists an $\zeta > 0$ such that $(X,d)$ has strict $p$-negative type for all $p \in (1-\zeta,1+\zeta)$.
Moreover, $\zeta$ may be chosen so that it depends only upon $\eeg$ and the set of non zero distances in $(X,d)$.
\end{thm}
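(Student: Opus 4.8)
The plan is to deduce strict $p$-negative type for $p$ near $1$ from the positivity of $\Gamma_X$ by a perturbation argument applied to the generalized roundness inequality (Theorem \ref{GRUNT}(c)), using the fact that for a finite metric space the relevant quantity depends continuously on $p$ and that we can uniformly control the perturbation on a suitable normalized parameter space. Recall (Remark \ref{REMGR}) that $(X,d)$ has strict $p$-negative type precisely when $\gamma_{D,p}(\vec{\omega}) > 0$ for every normalized $(q,t)$-simplex $D(\vec{\omega})$ in $X$. Since $|X| < \infty$, there are only finitely many possible $(q,t)$-simplexes $D$ (the vertices are drawn from the finite set $X$), so it suffices to handle each $D$ separately and take the worst case.

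First I would fix a $(q,t)$-simplex $D = [a_j;b_i]_{q,t}$ in $X$ and regard $\gamma_{D,p}(\vec{\omega})$ as a function on the compact set $N_{q,t}$ of normalized load vectors, jointly in $(p,\vec{\omega})$. Writing $d(u,v)^p = d(u,v)\cdot d(u,v)^{p-1}$ and using $|\,d(u,v)^{p-1} - 1\,| \le C|p-1|$ uniformly over the finitely many nonzero distances in $X$ (here $C$ depends only on $\max$ and $\min$ of the nonzero distances), one gets an estimate of the shape
\begin{eqnarray}\label{NEWFOUR}
\bigl|\,\gamma_{D,p}(\vec{\omega}) - \gamma_{D,1}(\vec{\omega})\,\bigr| & \le & K \cdot |p-1|
\end{eqnarray}
for all $\vec{\omega} \in N_{q,t}$, where $K$ depends only on the set of nonzero distances in $X$ and on $q+t \le |X|$ (to bound the number of summands), but not on $\vec{\omega}$ or on the internal choice of $D$. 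Since $\gamma_{D,1}(\vec{\omega}) \ge \Gamma_X$ for every normalized simplex by Definition \ref{NGAP}, the estimate (\ref{NEWFOUR}) gives $\gamma_{D,p}(\vec{\omega}) \ge \Gamma_X - K|p-1|$, which is strictly positive as soon as $|p-1| < \Gamma_X / K =: \zeta$. As this bound is uniform over all normalized $(q,t)$-simplexes $D(\vec{\omega})$ in $X$, Remark \ref{REMGR} then yields strict $p$-negative type for all $p \in (1-\zeta, 1+\zeta)$.

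Finally, for the dependence claim, I would note that $\zeta = \Gamma_X / K$ with $K$ built only from the finite set of nonzero distances and from an upper bound on the number of terms (controlled by $|X|$, hence by anything controlling $|X|$), and $\Gamma_X$ is part of the hypothesis; a small amount of care shows $|X|$ itself can be bounded in terms of $\Gamma_X$ and the nonzero distances, so in the end $\zeta$ depends only on $\eeg$ and the set of nonzero distances, as claimed. The main obstacle is making the perturbation estimate (\ref{NEWFOUR}) genuinely uniform: one must keep the constant $K$ from secretly depending on the particular simplex $D$ or load vector $\vec{\omega}$. This is handled by the observations that (i) the number of distance-terms in $\gamma_{D,p}$ is at most $\binom{|X|}{2}$ regardless of $D$, (ii) each load-weight lies in $[0,1]$ after normalization, and (iii) the factor $|d^{p-1}-1|$ is controlled by $|p-1|$ uniformly over the finite distance set — none of which sees $D$ or $\vec{\omega}$ beyond these crude bounds. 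One should also take $\zeta$ small enough that $1-\zeta > 0$, which is automatic once $\zeta \le \Gamma_X/K$ is combined with the trivial bound $\Gamma_X \le$ (diameter), but can be imposed by hand if desired.
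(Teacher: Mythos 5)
Your proof is correct and follows essentially the same perturbation strategy as the paper's: reduce, via Remark~\ref{REMGR}, to showing $\gamma_{D,p}(\vec{\omega})>0$ for every normalized simplex, use the hypothesis $\gamma_{D,1}(\vec{\omega})\ge\Gamma_X$ as a cushion, and show that the change in the generalized roundness quantity is $O(|p-1|)$ uniformly because $X$ is finite. The small difference is one of bookkeeping: the paper first rescales so that the minimum nonzero distance $\ees\ge 1$, which makes $\eeR(p)$ monotone increasing in $p$; this lets them control only the left-hand side $\eeL(p)$ and produce the explicit value $\zeta = \ln\bigl(1+\tfrac{\Gamma_X}{2\eel(n-1)(n-2)}\bigr)/\ln\eel$. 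You instead use a symmetric two-sided Lipschitz bound $|\gamma_{D,p}-\gamma_{D,1}|\le K|p-1|$, which avoids the rescaling and the asymmetric treatment at the cost of a less explicit $\zeta$. Both are fine; the paper's version is what makes the later Corollary on $\wp_T$ quantitative.

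One caveat, affecting the paper as much as you: both your $K$ and the paper's $(n-1)(n-2)$ factor genuinely involve $n=|X|$, and neither source actually proves the closing assertion that $n$ can be recovered from $\Gamma_X$ and the set of nonzero distances. You flag this honestly but assert it without an argument; the paper simply writes down a $\zeta$ containing $n$ and states the dependence claim. For the tree application (Theorem~\ref{LOWER}) the number of vertices \emph{is} determined by the multiset of edge weights, so the issue evaporates there, but for a general finite metric space this point would need either an actual bound on $n$ or a reformulation of the dependence claim to include $|X|$.
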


\begin{proof}
We may assume that the metric $d$ is not a positive multiple of the discrete metric on $X$. (Otherwise, $(X,d)$ has strict
$p$-negative type for all $p > 0$.) And we will let $n$ denote $|X|$,
the cardinality of $X$ (which is assumed to be at least three).
Our focus will be on determining the interval $[1,1+\zeta)$. (Arguing the interval $(1-\zeta,1]$ is entirely similar.)

It is helpful to begin with a simple estimate which will be used later in the proof.
Namely; if $b > 1$, $k \in \mathbb{N}$ and $\ve > 0$, then:
\begin{eqnarray}
b^{1+\ve} - b < \frac{\eeg}{2k} & \,\, \text{if and only if}\,\, &
\ve < \frac{\ln\left( 1 + \frac{\eeg}{2kb} \right)}{\ln b}. \label{ESTTWO}
\end{eqnarray}

Let $$\ees = \min\limits_{x \not= y} d(x,y)\,\, {\rm{and}}\,\, \eel = \max\limits_{x \not= y} d(x,y)$$ denote the shortest
and longest non zero distances in $(X,d)$. Our opening assumption on $d$ in this proof is that $\ees < \eel$.
By scaling (if necessary) we may further assume that $\ees \geq 1$.

Consider an arbitrary normalized $(q,t)$-simplex $\cutie$ in $X$. Note that both $q,t \leq n-1$ because $q+t \leq n$.
Given $p \geq 0$, we will use the following abbreviated notation throughout the remainder of this proof:
\begin{eqnarray*}
\eeL (p) & = & \sum\limits_{j_{1}<j_{2}} m_{j_{1}}m_{j_{2}}d(a_{j_{i}},a_{j_{2}})^{p}
+ \sum\limits_{i_{1}<i_{2}} n_{i_{1}}n_{i_{2}}d(b_{i_{1}},b_{i_{2}})^{p},\,\,{\rm{and}} \\
\eeR (p) & = & \sum\limits_{j,i} m_{j}n_{i}d(a_{j},b_{i})^{p}.
\end{eqnarray*}

Our $1$-negative type gap hypothesis on $(X,d)$, applied to the simplex $D(\vec{\omega})$, is therefore:
\begin{eqnarray}\label{NEWONE}
\eeL (1) + \eeg \leq \eeR (1).
\end{eqnarray}

The overall idea of the proof is to exploit the $1$-negative type gap $\eeg > 0$ to show
$$\eeL (1+\ve) < \eeL (1) + \frac{\eeg}{2}\,\, {\rm{and}}\,\, \eeR (1) - \frac{\eeg}{2} < \eeR (1+\ve)$$
provided $\ve > 0$ is sufficiently small.
If so, we then have $\eeL (1+\ve) < \eeR (1+\ve)$ by (\ref{NEWONE}), provided $\ve > 0$ is sufficiently small,
and (hence) the theorem follows.

In the current context we have $\eeR (1) < \eeR (1+\ve) < \eeR (1+\ve) + \frac{\eeg}{2}$
for all $\ve > 0$ because all of the
non zero distances in $(X,d)$ are at least one. Moreover, for all $\ell = d(x,y) \not= 0$ and all $\ve > 0$, we have
$\ell^{1+\ve} - \ell \leq \eel^{1+\ve} - \eel$.
This is because (for any fixed $\ve > 0$) the function $f(x) = x^{1+\ve} - x$ increases as
$x$ ($\geq 1$) increases.

Now, recalling that we need to show that
$\eeL (1+\ve) < \eeL (1) + \frac{\eeg}{2}$ for all sufficiently small $\ve > 0$, observe that we have
\begin{eqnarray}\label{NEWTWO}
\eeL (1+\ve) - \eeL (1) & = & \sum\limits_{j_{1}<j_{2}} m_{j_{1}}m_{j_{2}}
\left( d(a_{j_{1}},a_{j_{2}})^{1+\ve} - d(a_{j_{1}},a_{j_{2}}) \right) \\
& ~ & ~ \nonumber \\
& ~ & + \sum\limits_{i_{1}<i_{2}} n_{i_{1}}n_{i_{2}}
\left( d(b_{i_{1}},b_{i_{2}})^{1+\ve} - d(b_{i_{1}},b_{i_{2}}) \right) \nonumber \\
& \leq & (n-1)(n-2)(\eel^{1+\ve} - \eel) \nonumber
\end{eqnarray}
on the basis of the preceding comments. And, according to (\ref{ESTTWO}), we have:
\begin{eqnarray}\label{NEWTHREE}
\eel^{1+\ve} - \eel < \frac{\eeg}{2(n-1)(n-2)} & \,\, \text{iff} \,\, & \ve < \frac{\ln \left( 1 + \bigl\{
\frac{\eeg}{2 \eel (n-1)(n-2)} \bigl\}\right)}{\ln \eel}.
\end{eqnarray}
If we now set
\begin{eqnarray}\label{NEWFOUR}
\zeta & = & \frac{\ln \left( 1 + \bigl\{ \frac{\eeg}{2 \eel (n-1)(n-2)}\bigl\}\right)}{\ln \eel},
\end{eqnarray}
then it is clear that (\ref{NEWTWO}) and (\ref{NEWTHREE}) establish the theorem.
\end{proof}

Looking at the statement and proof of Theorem \ref{LOWERBOUNDS} it is clear that a more general theorem can be formulated.
This more general theorem, which we will now state, follows from simple modifications and adaptations of the proof of
Theorem \ref{LOWERBOUNDS}.

\begin{thm}\label{LOWERBOUNDS2}
Let $(X,d)$ be a finite metric space with $|X| \geq 3$.
Let $p_{1} \geq 0$. If the $p_{1}$-negative
type gap $\Gamma_{X,p_{1}} > 0$, then there exists an $\zeta > 0$ such that $(X,d)$ has strict
$p$-negative type for all $p \in (p_{1}-\zeta,p_{1}+\zeta)$. Moreover,
$\zeta$ may be chosen so that it depends only
upon $\Gamma_{X,p_{1}}$ and the set of non zero distances in $(X,d)$. Note, however, that in
the case $p_{1}=0$ one must naturally work with the interval $p \in (0,\zeta)$.
\end{thm}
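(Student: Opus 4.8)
The plan is to re-run the proof of Theorem \ref{LOWERBOUNDS} almost verbatim, replacing $1$ by $p_{1}$ throughout; the only genuinely new points are a monotonicity check valid for every $p_{1}\geq 0$ and a quick look at the endpoint $p_{1}=0$. As in that proof, I would first discard the case where $d$ is a positive multiple of the discrete metric (then $(X,d)$ has strict $p$-negative type for every $p>0$), so that the shortest and longest non zero distances satisfy $\ees<\eel$; after rescaling we may assume $\ees\geq 1$, whence $\eel>1$. Put $n=|X|\geq 3$. For an arbitrary normalized $(q,t)$-simplex $\cutie$ in $X$ I keep the abbreviations $\eeL(p)$, $\eeR(p)$ from the proof of Theorem \ref{LOWERBOUNDS}, so that the hypothesis $\Gamma_{X,p_{1}}>0$ reads $\eeL(p_{1})+\Gamma_{X,p_{1}}\leq \eeR(p_{1})$; note $q,t\leq n-1$.

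First I would handle the interval $[p_{1},p_{1}+\zeta)$. Since every non zero distance is at least $1$, the map $p\mapsto\eeR(p)$ is non decreasing, so $\eeR(p_{1}+\ve)\geq\eeR(p_{1})>\eeR(p_{1})-\tfrac12\Gamma_{X,p_{1}}$ for all $\ve>0$. For the $\eeL$ side the key elementary fact is that, for fixed $\ve>0$ and $p_{1}\geq 0$, the function $x\mapsto x^{p_{1}+\ve}-x^{p_{1}}$ is non decreasing on $[1,\infty)$, its derivative being $x^{p_{1}-1}\bigl((p_{1}+\ve)x^{\ve}-p_{1}\bigr)\geq 0$ there. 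Hence, exactly as in (\ref{NEWTWO}),
\[
\eeL(p_{1}+\ve)-\eeL(p_{1})\;\leq\;(n-1)(n-2)\bigl(\eel^{\,p_{1}+\ve}-\eel^{\,p_{1}}\bigr).
\]
Next I would record the analogue of (\ref{ESTTWO}): for $b>1$, $k\in\mathbb{N}$ and $\ve>0$,
\[
b^{\,p_{1}+\ve}-b^{\,p_{1}}<\frac{\Gamma_{X,p_{1}}}{2k}\quad\Longleftrightarrow\quad \ve<\frac{\ln\!\left(1+\frac{\Gamma_{X,p_{1}}}{2k\,b^{\,p_{1}}}\right)}{\ln b},
\]
immediate on writing $b^{\,p_{1}+\ve}-b^{\,p_{1}}=b^{\,p_{1}}(b^{\ve}-1)$. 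Setting
\[
\zeta \;=\; \frac{\ln\!\left(1+\frac{\Gamma_{X,p_{1}}}{2\,\eel^{\,p_{1}}(n-1)(n-2)}\right)}{\ln \eel},
\]
the previous two displays force $\eeL(p_{1}+\ve)<\eeL(p_{1})+\tfrac12\Gamma_{X,p_{1}}$ for all $\ve\in(0,\zeta)$, hence $\eeL(p_{1}+\ve)<\eeR(p_{1}+\ve)$ by the gap hypothesis; as the simplex was arbitrary, Remark \ref{REMGR} gives strict $p$-negative type for every $p\in[p_{1},p_{1}+\zeta)$.

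For the interval $(p_{1}-\zeta,p_{1}]$ I would argue symmetrically: $p\mapsto\eeL(p)$ is non decreasing, so $\eeL(p_{1}-\ve)\leq\eeL(p_{1})<\eeL(p_{1})+\tfrac12\Gamma_{X,p_{1}}$ for free, while $\eeR(p_{1})-\eeR(p_{1}-\ve)\leq\eel^{\,p_{1}}-\eel^{\,p_{1}-\ve}<\eel^{\,p_{1}+\ve}-\eel^{\,p_{1}}$ (the first inequality because the simplex weights sum to $1$ and $x\mapsto x^{p_{1}}-x^{p_{1}-\ve}$ is non decreasing on $[1,\infty)$, the second because $\eel^{\ve}+\eel^{-\ve}>2$ for $\ve>0$), so the very same $\zeta$ works. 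Finally, for $p_{1}=0$ the formula becomes $\zeta=\ln\!\bigl(1+\Gamma_{X,0}/(2(n-1)(n-2))\bigr)/\ln\eel$ and the argument runs verbatim on the one-sided interval $(0,\zeta)$, since $\eeR(\ve)\geq\eeR(0)$ and $\eeL(\ve)-\eeL(0)\leq(n-1)(n-2)(\eel^{\ve}-1)$. I do not expect any real obstacle here: the proof is a transcription of that of Theorem \ref{LOWERBOUNDS}, and the only step needing care is confirming that the relevant power-difference functions are monotone on $[1,\infty)$ for every $p_{1}\geq 0$ (so that the extreme distance $\eel$ dominates each term) and that the closed form for $\zeta$ does not degenerate at $p_{1}=0$. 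The asserted dependence of $\zeta$ on $\Gamma_{X,p_{1}}$ and on the set of non zero distances (through $\ees,\eel$) is evident from that closed form.
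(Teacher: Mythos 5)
Your proof is correct and follows exactly the route the paper indicates: the authors state Theorem \ref{LOWERBOUNDS2} without proof, remarking only that it ``follows from simple modifications and adaptations of the proof of Theorem \ref{LOWERBOUNDS},'' and your argument is precisely that adaptation, with $1$ replaced by $p_{1}$, the necessary monotonicity of $x \mapsto x^{p_{1}+\ve}-x^{p_{1}}$ on $[1,\infty)$ verified, and the two-sided interval and the $p_{1}=0$ endpoint handled carefully.
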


We mentioned in Section $1$ that it is not known if the maximal $p$-negative type of a finite
metric space $(X,d)$ can be strict.
The following automatic corollary of Theorem \ref{LOWERBOUNDS2} provides some information on this open question.

\begin{cor}
Let $(X,d)$ be a finite metric space with $|X| \geq 3$. Let $\wp$ denote the maximal $p$-negative type of $(X,d)$.
If $(X,d)$ has strict $\wp$-negative type, then the $\wp$-negative type gap $\Gamma_{X,\wp}$ of $(X,d)$ must equal zero.
\end{cor}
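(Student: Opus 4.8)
The plan is to prove the contrapositive, with Theorem \ref{LOWERBOUNDS2} doing all of the work. First I would assume, towards a contradiction, that the $\wp$-negative type gap satisfies $\Gamma_{X,\wp} > 0$. Since $|X| \geq 3$ and $\wp \geq 0$, Theorem \ref{LOWERBOUNDS2} applies verbatim with $p_{1} = \wp$: it yields a constant $\zeta > 0$, depending only on $\Gamma_{X,\wp}$ and the set of non-zero distances in $(X,d)$, such that $(X,d)$ has strict $p$-negative type --- and therefore, a fortiori, $p$-negative type --- for every $p$ in the interval $(\wp - \zeta, \wp + \zeta)$, or in $(0,\zeta)$ in the degenerate case $\wp = 0$.

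The contradiction is then immediate. In either case this interval contains exponents $p$ with $p > \wp$ at which $(X,d)$ possesses $p$-negative type. But, as recalled at the beginning of this section (see Wells and Williams \cite{WW}), $p$-negative type for $(X,d)$ holds exactly on the closed interval $[0,\wp]$, where $\wp = \max\{p_{\ast} : (X,d) \text{ has } p_{\ast}\text{-negative type}\}$. The existence of such a $p > \wp$ thus contradicts the maximality of $\wp$. Hence $\Gamma_{X,\wp}$ cannot be positive, and, being non-negative by Definition \ref{NGAP} and Remark \ref{NGAPREM}, it must equal zero. This conclusion is in force whenever $(X,d)$ has strict $\wp$-negative type, which is exactly the assertion of the corollary.

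I do not foresee any genuine obstacle: the corollary is, as its name suggests, automatic from Theorem \ref{LOWERBOUNDS2}. The only point deserving a word of care is the boundary case $\wp = 0$, where Theorem \ref{LOWERBOUNDS2} supplies only the one-sided interval $(0,\zeta)$; but this interval still contains points strictly to the right of $\wp$, so the argument is unaffected. It is worth remarking that the same reasoning in fact shows $\Gamma_{X,\wp} = 0$ for \emph{every} finite metric space with $|X| \geq 3$, whether or not it has strict $\wp$-negative type; the hypothesis stated in the corollary is retained only to make transparent its relevance to the open problem of whether the maximal $p$-negative type of a finite metric space can be strict.
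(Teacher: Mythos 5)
Your proof is correct and is precisely the argument the paper has in mind when it labels this an ``automatic corollary'' of Theorem \ref{LOWERBOUNDS2}: a positive gap $\Gamma_{X,\wp}$ would, via that theorem, push strict $p$-negative type past $\wp$, contradicting the maximality of $\wp$ and the fact (recalled from Wells and Williams \cite{WW}) that $p$-negative type holds exactly on $[0,\wp]$. Your closing remark is also correct, though one can say slightly more: when $(X,d)$ fails to have strict $\wp$-negative type, the equality case together with Remark \ref{REMGR} already forces some simplex gap $\gamma_{D,\wp}(\vec{\omega})$ to vanish, so $\Gamma_{X,\wp}=0$ is immediate from the definitions and Theorem \ref{LOWERBOUNDS2} is not needed in that case.
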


Theorem \ref{LOWERBOUNDS} and Corollary \ref{GAPFORMULA} automatically imply the
following generalization of Corollary \ref{HJORTH}.
Recall that Corollary \ref{HJORTH} is due to Hjorth \textit{et al}.\ \cite{HLM}.

\begin{thm}\label{LOWER}
Let $(T,d)$ be a finite metric tree with $n = |T| \geq 3$. Then there exists an $\zeta > 0$
such that $(T,d)$ has strict $p$-negative type for
all $p \in (1-\zeta,1+\zeta)$. Moreover, $\zeta$ may be chosen so that it depends only upon the unordered distribution
of the tree's edge weights.
\end{thm}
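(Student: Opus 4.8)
\medskip\noindent\textbf{Proof proposal.}\
The plan is to read off Theorem~\ref{LOWER} from Theorem~\ref{LOWERBOUNDS} and Corollary~\ref{GAPFORMULA}, the only substantive issue being to sharpen the dependence of $\zeta$ from ``$\Gamma_{T}$ and the set of nonzero distances of $(T,d)$'' to the unordered edge-weight distribution of $(T,d)$ alone. First I would observe that $(T,d)$ is a finite metric space with $|T| = n \geq 3$, so Theorem~\ref{LOWERBOUNDS} applies as soon as its hypothesis $\Gamma_{T} = \Gamma_{T,1} > 0$ has been verified --- and this is immediate from Corollary~\ref{GAPFORMULA}, which evaluates
\[
\Gamma_{T} \;=\; \left\{ \sum\limits_{e \in E(T)} |e|^{-1} \right\}^{-1},
\]
the reciprocal of a finite sum of strictly positive numbers. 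Thus Theorem~\ref{LOWERBOUNDS} produces an $\zeta > 0$, depending only on $\Gamma_{T}$ and the set of nonzero distances of $(T,d)$, such that $(T,d)$ has strict $p$-negative type for all $p \in (1-\zeta, 1+\zeta)$.

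To upgrade the dependence I would invoke three elementary facts. First, strict $p$-negative type is invariant under rescaling the metric by a positive constant, so we may rescale $(T,d)$ --- equivalently, rescale the edge-weight multiset $\{|e| : e \in E(T)\}$ --- so that $\ees := \min_{e \in E(T)} |e| \geq 1$; the rescaled tree is still subject to Corollary~\ref{GAPFORMULA}, so its gap depends only on the rescaled multiset. Second, in a metric tree every nonzero distance is a sum of edge lengths, the smallest being the length of a shortest edge; hence after rescaling every nonzero distance $\ell$ of $(T,d)$ obeys $1 \leq \ell \leq W$, where $W := \sum_{e \in E(T)} |e|$, and moreover $W \geq 2$ since $n \geq 3$ forces at least two edges, each of length $\geq 1$. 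Third, for fixed $\ve > 0$ the function $x \mapsto x^{1+\ve} - x$ is increasing on $[1,\infty)$, so in the estimate carried out in the proof of Theorem~\ref{LOWERBOUNDS} one may harmlessly replace the diameter $\eel$ by the larger quantity $W$. Doing this, the explicit value (\ref{NEWFOUR}) may be taken to be
\[
\zeta \;=\; \frac{\ln\!\left( 1 + \dfrac{\Gamma_{T}}{2\,W\,(n-1)(n-2)} \right)}{\ln W},
\]
which is a well-defined positive real because $W \geq 2 > 1$ and $(n-1)(n-2) \geq 2$. By Corollary~\ref{GAPFORMULA}, $\Gamma_{T}$ depends only on the multiset $\{|e|\}$, and so do $W = \sum_{e} |e|$ and $n - 1 = |E(T)|$; hence so does this $\zeta$. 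Since any smaller positive constant also works, the theorem follows.

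I expect the main --- and essentially the only --- obstacle to be this final bookkeeping: one must confirm that every metric quantity entering the quantitative form of Theorem~\ref{LOWERBOUNDS} (the extreme nonzero distances $\ees,\eel$ and the cardinality $n$) either equals, or is harmlessly over-estimated by, distributional data ($\ees = \min_e |e|$, $\eel \leq W = \sum_e |e|$, $n-1 = |E(T)|$), and that the normalisation $\ees \geq 1$ is itself a legitimate, purely distributional manoeuvre justified by scale-invariance of strict $p$-negative type. Nothing here is deep; all the substance already resides in Theorem~\ref{LOWERBOUNDS} and Corollary~\ref{GAPFORMULA}.
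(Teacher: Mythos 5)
Your proposal is correct and follows exactly the route the paper intends: Theorem~\ref{LOWER} is stated as an immediate consequence of Theorem~\ref{LOWERBOUNDS} and Corollary~\ref{GAPFORMULA}, with no separate proof supplied. You have also correctly identified and disposed of the one piece of bookkeeping that the paper's ``automatically imply'' glosses over --- namely that the quantities $\ees$, $\eel$ and $n$ entering (\ref{NEWFOUR}) are not literally edge-weight data, but in a tree $\ees = \min_{e}|e|$, $\eel \leq W = \sum_{e}|e|$, and $n-1 = |E(T)|$, all of which are determined by the unordered multiset of edge weights, and the rescaling normalisation $\ees \geq 1$ is itself a purely distributional move because strict $p$-negative type is scale-invariant.
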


Looking at Theorem \ref{LOWER} and referring back to the estimate (\ref{NEWFOUR}) in the proof of Theorem \ref{LOWERBOUNDS}, we can extract
the following interesting corollary. This corollary gives a lower bound on the maximal $p$-negative type of any finite tree $T$
endowed with the ordinary path metric. Importantly, these lower bounds depend only on $|T|$.

\begin{cor}
Let $T$ be a finite tree with $|T| = n \geq 3$. Let $\wp_{T}$ denote the maximal $p$-negative type of $(T,d)$. Then:
\[
\wp_{T} \geq 1 + \frac{\ln \bigl( 1 + \frac{1}{(n-1)^{3}(n-2)} \bigl)}{\ln (n-1)}.
\]
\end{cor}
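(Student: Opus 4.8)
The plan is to read the corollary off as a direct specialization of the estimate (\ref{NEWFOUR}) produced in the proof of Theorem \ref{LOWERBOUNDS}, taking $(X,d)$ there to be the finite tree $(T,d)$ with the ordinary path metric. Two numerical inputs make everything explicit. First, since every edge has $|e|=1$ and a tree on $n$ vertices has exactly $n-1$ edges, Corollary \ref{GAPFORMULA} gives $\eeg = \Gamma_{T} = \bigl\{ \sum_{e \in E(T)} |e|^{-1} \bigr\}^{-1} = (n-1)^{-1} > 0$, so the hypothesis of Theorem \ref{LOWERBOUNDS} is satisfied. Secondly, $\eel = \max_{x \neq y} d(x,y)$ is the diameter of $T$; a geodesic in a tree on $n$ vertices traverses at most $n-1$ unit edges, so $1 = \ees \le \eel \le n-1$, and no rescaling is needed in the proof of Theorem \ref{LOWERBOUNDS}. (The degenerate case $\ees = \eel$ would force $T$ to carry the discrete metric, which is impossible for a tree on $n \ge 3$ vertices, so it need not be considered.)

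With these substitutions I would retrace the chain (\ref{NEWTWO})--(\ref{NEWFOUR}), keeping the pair-count sharp. For a normalized $(q,t)$-simplex $\cutie$ in $T$ one has $q+t \le n$, hence $\binom{q}{2} + \binom{t}{2} \le \binom{n-1}{2} = \tfrac{1}{2}(n-1)(n-2)$, while each weight product $m_{j_{1}}m_{j_{2}}$ and $n_{i_{1}}n_{i_{2}}$ is strictly less than $1$; so, exactly as in (\ref{NEWTWO}) but with this sharper count,
\[
\eeL(1+\ve) - \eeL(1) \; \le \; \tfrac{1}{2}(n-1)(n-2)\bigl(\eel^{\,1+\ve} - \eel\bigr).
\]
Requiring the right-hand side to be smaller than $\eeg/2$ and invoking the elementary equivalence (\ref{ESTTWO}) with $b = \eel$ and $k = \binom{n-1}{2}$ shows that it is enough to take
\[
\ve \; < \; \frac{\ln\bigl(1 + \tfrac{\eeg}{(n-1)(n-2)\,\eel}\bigr)}{\ln \eel}.
\]
Substituting $\eeg = (n-1)^{-1}$ and noting that this expression is decreasing in $\eel$ (its numerator decreases and its denominator $\ln \eel$ increases as $\eel$ grows), we may replace $\eel$ by its largest admissible value $n-1$ to obtain the valid choice
\[
\zeta \; = \; \frac{\ln\bigl(1 + \tfrac{1}{(n-1)^{3}(n-2)}\bigr)}{\ln(n-1)} .
\]

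Theorem \ref{LOWERBOUNDS} (equivalently Theorem \ref{LOWER}) then applies with this $\zeta$, so $(T,d)$ has strict $p$-negative type for every $p \in (1-\zeta,1+\zeta)$, and in particular it has $p$-negative type for every $p < 1+\zeta$. Because $p$-negative type persists on a closed interval $[0,\wp_{T}]$, this forces $\wp_{T} \ge 1+\zeta$, which is the asserted inequality. The only genuinely delicate point is the bookkeeping of multiplicative constants: one must use the sharp bound $\binom{n-1}{2}$ on the number of pairs --- rather than the cruder $(n-1)(n-2)$ that sufficed in the proof of Theorem \ref{LOWERBOUNDS} --- and exploit monotonicity in $\eel$ in order to land precisely on $\frac{1}{(n-1)^{3}(n-2)}$ inside the logarithm. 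Everything else is an immediate specialization of results already established.
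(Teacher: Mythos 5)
Your proof is correct and follows the same top-level route as the paper: specialize the estimate (\ref{NEWFOUR}) via $\ees=1$, $\eel\le n-1$, and $\eeg=\Gamma_T=1/(n-1)$ from Corollary~\ref{GAPFORMULA}, then drop a factor of $2$. Where you diverge is in how you justify that factor of $2$. The paper's own proof is essentially a one-liner: it asserts that a factor of $2$ has been removed from (\ref{NEWFOUR}) and adds only that ``it is clear that this can always be done in the proof of Theorem~\ref{LOWERBOUNDS}.'' You instead supply an explicit mechanism --- replace the loose pair count $(n-1)(n-2)$ in (\ref{NEWTWO}) (which invokes $q\le n-1$ and $t\le n-1$ independently) with the sharper $\binom{n-1}{2}=\tfrac12(n-1)(n-2)$ obtained from $q+t\le n$, then use the monotonicity of $\zeta$ in $\eel$ to pass to $\eel=n-1$. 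This is a legitimate and complete account, and the monotonicity observation is in fact needed (and left implicit) in the paper's own reading of (\ref{NEWFOUR}). An equally valid alternative mechanism, which is perhaps what the authors had in mind, is that in the proof of Theorem~\ref{LOWERBOUNDS} the inequality $\eeR(1)-\tfrac{\eeg}{2}<\eeR(1+\ve)$ is automatic once $\ees\ge 1$ (since then $\eeR(1)\le\eeR(1+\ve)$), so the entire gap $\eeg$ rather than $\eeg/2$ can be spent on controlling $\eeL$. Either reading yields precisely the claimed factor. Your final step, deducing $\wp_T\ge 1+\zeta$ from strict $p$-negative type on $(1-\zeta,1+\zeta)$ via the closed-interval property, is also handled correctly.
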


\begin{proof}
Simply observe that, in the notation of the proof of Theorem \ref{LOWERBOUNDS}, we have $\ees = 1$, $\eel \leq n-1$,
and that $\eeg = \Gamma_{T} = 1/(n-1)$ by Corollary \ref{GAPFORMULA}, so we may apply (\ref{NEWFOUR}) to obtain
the stated lower bound on $\wp_{T}$. We should point out that in applying (\ref{NEWFOUR}) in this context we have
removed a factor of $2$ from the expression for $\zeta$. It is clear that this can always be done in
the proof of Theorem \ref{LOWERBOUNDS}.
\end{proof}

For certain classes of finite metric trees $(T,d)$, such as ``stars'', it is possible to compute the maximum of all $p$ such that $(T,d)$ has
$p$-negative type. Such examples may then be strung together to form further interesting metric trees (with sometimes pathological
properties) such as the ``infinite necklace'' which is described in Example \ref{STAR2}.

\begin{example}[A star with $n$ leaves]\label{STAR}
Let $n \geq 2$ be a natural number. Let $Y_{n}$ denote the unique tree with $n+1$ vertices and $n$ leaves.
In other words, $Y_{n}$ consists of an internal node, which we will denote $r_{n}$, surrounded by $n$ leaves. We
endow $Y_{n}$ with the ordinary path metric $d$. Consequently, there are only two non zero distances
in this tree; $1$ \& $2$. The following theorem computes the maximal $p$-negative type of $Y_{n}$.
\end{example}

\begin{thm}\label{MAXIMAL}
For all natural numbers $n \geq 2$, the maximal $p$-negative type $\wp_{n}$ of the metric tree $(Y_{n},d)$
is given by $$\wp_{n} = 1 + \frac{\ln \bigl( 1+ \frac{1}{n-1}\bigl)}{\ln 2}.$$
\end{thm}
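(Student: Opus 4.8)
The plan is to verify the two halves of the claim directly via the equivalence of $p$-negative type with the simplex inequality (\ref{TWO}) furnished by Theorem \ref{GRUNT}, together with the fact (recalled in Section~5) that $p$-negative type holds on a closed interval $[0,\wp]$. First I would pin down the upper bound by exhibiting a single bad configuration in $(Y_n,d)$: take $\eta$ equal to $1$ at the central vertex $r_n$ and equal to $-1/n$ at each of the $n$ leaves, so that $\eta_1+\cdots+\eta_{n+1}=0$. Since $d(r_n,x)=1$ for every leaf $x$ and $d(x,y)=2$ for distinct leaves, a direct evaluation gives $\sum_{i,j} d(x_i,x_j)^{p}\eta_i\eta_j = -2 + 2^{p}\cdot\frac{n-1}{n}$. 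The $p$-negative type inequality (\ref{NT}) forces this to be $\le 0$, i.e. $2^{p-1}\le \frac{n}{n-1}=1+\frac{1}{n-1}$, which is exactly $p\le\wp_n$. Hence $(Y_n,d)$ fails $p$-negative type for every $p>\wp_n$.

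For the lower bound I would show that $(Y_n,d)$ has $\wp_n$-negative type; by the interval property it suffices to check (\ref{TWO}) at $p=\wp_n$ for every normalized $(q,t)$-simplex $D(\vec\omega)=[a_j(m_j);b_i(n_i)]_{q,t}$ in $Y_n$, and I would split into two cases according to whether $r_n$ occurs among the chosen vertices. If $r_n$ does not occur, all chosen vertices are leaves and are pairwise equidistant, so the restriction of $d$ to them is a rescaled discrete metric and (\ref{TWO}) is immediate (indeed it holds for all $p\ge 0$). If $r_n$ does occur, then by the $a\leftrightarrow b$ symmetry of (\ref{TWO}) I may assume $r_n=a_1$, with weight $m:=m_1$; write $S_a=\sum_{j=2}^{q}m_j^{2}$ and $S_b=\sum_{i=1}^{t}n_i^{2}$. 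Substituting the only two distance values $1$ and $2$ and expanding both sides of (\ref{TWO}), the $a$-sum contributes $m(1-m)+2^{p-1}\bigl((1-m)^{2}-S_a\bigr)$, the $b$-sum contributes $2^{p-1}(1-S_b)$, and the mixed sum equals $m+2^{p}(1-m)$. Writing $c=2^{p-1}$ and using the identity $(1-m)^{2}-2(1-m)+1=m^{2}$, the inequality (\ref{TWO}) collapses to the clean requirement
\[
c\,(S_a+S_b) \;\ge\; (c-1)\,m^{2}.
\]

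At $p=\wp_n$ one has $c=2^{p-1}=\frac{n}{n-1}$, so $\frac{c-1}{c}=\frac1n$, and the requirement becomes $S_a+S_b\ge \frac{m^{2}}{n}$. This is now easy: $S_a\ge 0$ trivially, and by Cauchy--Schwarz applied to the positive weights $n_1,\dots,n_t$ (which sum to $1$) we get $S_b\ge \frac1t\ge\frac1n$, since the $b_i$ are distinct leaves and $Y_n$ has only $n$ leaves, so $t\le n$; as $m\le 1$ this gives $S_a+S_b\ge\frac1n\ge\frac{m^{2}}{n}$, as needed. Thus $(Y_n,d)$ has $\wp_n$-negative type, and combining this with the first paragraph shows that the maximal $p$-negative type of $(Y_n,d)$ is exactly $\wp_n=1+\frac{\ln\!\bigl(1+\frac{1}{n-1}\bigr)}{\ln 2}$. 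I expect the only real obstacle to be organizing the case analysis so that the single extremal configuration of the first paragraph ($r_n$ of one parity, all $n$ leaves of the other, weights $1$ and $1/n$) is seen to be genuinely worst among all simplexes; once the algebra has been reduced to $c(S_a+S_b)\ge(c-1)m^{2}$ the remainder is routine, and equality there in fact forces precisely that configuration, which is exactly the generically labeled and generically weighted simplex governing the $p=1$ analysis of Sections~3--4.
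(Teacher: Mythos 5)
Your proof is correct, and it takes a genuinely different route to the lower bound than the paper. The paper handles the lower bound by a merging reduction: it shows that replacing two same-parity vertices $b_1(n_1),b_2(n_2)$ (with $b_1=r_n$) by $b_1(n_1+n_2)$ only makes inequality (\ref{TWO}) harder to satisfy, so it suffices to check generically labeled simplexes with $t=1$ and $b_1=r_n$, after which the sharp constraint $(1-\tfrac{1}{q})\,2^{p-1}\le 1$ and its minimization over $q\le n$ give $\wp_n$. You instead verify (\ref{TWO}) directly for every normalized simplex at $p=\wp_n$, reducing it --- via the identity $(1-m)^2-2(1-m)+1=m^2$ --- to the clean requirement $c(S_a+S_b)\ge(c-1)m^2$ with $c=2^{p-1}$, and then close with the power-mean bound $S_b\ge 1/t\ge 1/n$ together with $m\le 1$. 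This bypasses the merging lemma entirely, and yields the equality characterization for free (equality forces $q=1$, $m=1$, $t=n$, $n_i=1/n$, i.e.\ the generically labeled and weighted simplex). The upper bound in both arguments comes from the same extremal configuration, written in the $\eta$-language in your version and in the normalized-simplex language in the paper. The only point worth flagging is that the step $t\le n$ uses the fact that once $r_n$ is assigned parity $a$, every $b_i$ is a distinct leaf; this is correct since $r_n$ is the unique non-leaf vertex of $Y_n$.
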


\begin{proof}
Consider a normalized $(q,t)$-simplex $D = D(\vec{\omega}) = [a_{j}(m_{j});b_{i}(n_{i})]_{q,t}$ in $Y_{n}$.

If the internal node $r_{n} \not= a_{j},b_{i}$ for all $j$ and $i$ then the generalized roundness inequalities (\ref{TWO})
become:
\[
\sum\limits_{j_{1} < j_{2}} m_{j_{1}}m_{j_{2}} \cdot 2^{p} + \sum\limits_{i_{1} < i_{2}} n_{i_{1}}n_{i_{2}} \cdot 2^{p} \leq
\sum\limits_{j,i} m_{j}n_{i} \cdot 2^{p},
\]
and these obviously hold for any $p \geq 0$. So we may assume that the internal node $r_{n}$ of $Y_{n}$ is represented
in the normalized simplex $D$ without any loss of generality. Say, $r_{n} = b_{1}$.

Now suppose that $t \geq 2$. Form a modified normalized $(q,t-1)$-simplex ${D}_{\ast} = {D}_{\ast}(\vec{{\omega}}_{\ast})$
in $Y_{n}$ by replacing
the pair $b_{1}(n_{1}), b_{2}(n_{2})$ in $D$ with $b_{1}(n_{1}+n_{2})$. In other words, remove the vertex $b_{2}$
from $D$ and add its simplex weight $n_{2}$ to that of $b_{1}$. Consider an arbitrary $p \geq 0$.
Let $\Delta_{\mathfrak{L}}$ and $\Delta_{\mathfrak{R}}$ denote the net change in the left and the right sides of the generalized
roundness-$p$ inequality (\ref{TWO}) when we pass from the modified normalized simplex ${D}_{\ast}$ to the
original normalized simplex $D$. It is not hard to see that:
\begin{eqnarray*}
\Delta_{\mathfrak{L}} & = & n_{1}n_{2} + \sum\limits_{2 < i \leq t} n_{2}n_{i} \cdot (2^{p}-1) \leq n_{2}(1-n_{2}) \cdot (2^{p}-1),
\,\,\,{{\rm and}} \\
\Delta_{\mathfrak{R}} & = & \sum\limits_{j=1}^{q} m_{j}n_{2} \cdot (2^{p}-1) = n_{2} \cdot (2^{p}-1).
\end{eqnarray*}
Because $\Delta_{\mathfrak{L}} < \Delta_{\mathfrak{R}}$ it follows that if
$p \geq 0$ satisfies the generalized roundness-$p$ inequality (\ref{TWO}) for the modified normalized simplex ${D}_{\ast}$,
then $p$ must also satisfy the generalized roundness-$p$ inequality (\ref{TWO}) for the original normalized simplex $D$. Hence,
by applying this rationale a finite number of times (as necessary), we may assume that the normalized simplex $D$
is generically labeled. That is, $t=1$ and $b_{1}=r_{n}$.

Now consider an arbitrary $p$ for which $(Y_{n},d)$ has $p$-negative type. Referring to our now generically
labeled normalized simplex $D$ we see that $p$ must satisfy:
\[
\sum\limits_{1 \leq j_{1} < j_{2} \leq q} m_{j_{1}}m_{j_{2}} \cdot 2^{p} \leq \sum\limits_{j=1}^{q} m_{j} \cdot 1^{p} = 1.
\]
That is: $$\Biggl( 1 - \sum\limits_{j=1}^{q} m_{j}^{2} \Biggl) \cdot 2^{p} \leq 2.$$

But $\max \Bigl( 1 - \sum\limits_{j=1}^{q} m_{j}^{2} \Bigl) = 1 - \frac{1}{q}$, which is realized when each weight
$m_{j} = \frac{1}{q}$ (in which case $D$ is also generically weighted), and so $p$ must satisfy $( 1 - \frac{1}{q} ) \cdot 2^{p-1} \leq 1$.
In other words: $$p \leq 1 + \frac{\ln \bigl( 1+ \frac{1}{q-1}\bigl)}{\ln 2}.$$
The right hand side of this last expression minimizes when $q=n$
and this implies:
\[
\wp_{n} = 1 + \frac{\ln \bigl( 1+\frac{1}{n-1}\bigl)}{\ln 2}.
\]
\end{proof}

Using Example \ref{STAR} and Theorem \ref{MAXIMAL} we can construct an infinite metric tree that has
strict $1$-negative type but does not have $p$-negative for any $p>1$.

\begin{example}\label{STAR2}
We can form an infinite tree $Y$ as follows: for each natural number $n \geq 2$ connect $Y_{n}$ to $Y_{n+1}$
by introducing a new edge which connects the internal node $r_{n}$ of $Y_{n}$ to the internal node $r_{n+1}$ of $Y_{n+1}$.
Endow $Y$ with the ordinary path metric $d$.
\end{example}

\begin{thm}\label{STRICTMAX}
The infinite metric tree $(Y,d)$ described in Example \ref{STAR2} has strict $1$-negative type
but does not have $p$-negative type for any $p > 1$. Moreover, the $1$-negative type gap $\Gamma_{Y}=0$.
\end{thm}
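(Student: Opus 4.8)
The plan is to derive all three assertions from the finite-tree machinery already in hand, the common thread being that everything about $(Y,d)$ localizes to finite subtrees.

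\textbf{Strict $1$-negative type.} I would first note that any normalized $(q,t)$-simplex $D(\vec{\omega})$ in $Y$ uses only finitely many points, so it is supported in the minimal subtree $T_{D}$, which — since $Y$ is a tree — is a \emph{finite} metric tree. The pairwise distances among the vertices of $D$ are the same whether measured in $Y$ or in $T_{D}$, so the simplex gap evaluation $\gamma_{D}(\vec{\omega})$ is unchanged; Theorem \ref{FORMULA} applied to $T_{D}$ then gives $\gamma_{D}(\vec{\omega}) > 0$. By Remark \ref{REMGR} this is precisely the statement that $(Y,d)$ has strict $1$-negative type. (One could equally well quote Corollary \ref{HJORTH} for $T_{D}$.)

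\textbf{No $p$-negative type for $p>1$.} The key point is that for each $n \ge 2$ the star $Y_{n}$ sits inside $Y$ as a geodesically convex subset — the $Y$-geodesic between two vertices of $Y_{n}$ passes only through $r_{n}$ — so the metric $Y$ induces on $Y_{n}$ is exactly the ordinary path metric of Example \ref{STAR}. Since the inequalities (\ref{NT}) quantify over finite subsets, $p$-negative type is inherited by subspaces; hence if $(Y,d)$ had $p$-negative type then so would every $(Y_{n},d)$, forcing $p \le \wp_{n} = 1 + \ln\bigl(1 + \tfrac{1}{n-1}\bigr)/\ln 2$ by Theorem \ref{MAXIMAL}. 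Letting $n \to \infty$ gives $\wp_{n} \to 1$, so $p \le 1$, a contradiction; thus $(Y,d)$ has no $p$-negative type for any $p > 1$.

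\textbf{$\Gamma_{Y}=0$.} Having established $1$-negative type, $\Gamma_{Y}$ is well defined. Every edge of $Y$ has unit length, and $E(Y)$ is infinite: for each $n \ge 2$ it contains the $n$ spokes of $Y_{n}$ together with the bridge $(r_{n},r_{n+1})$, so $\sum_{e \in E(Y)} |e|^{-1} = \sum_{n \ge 2}(n+1) = \infty$. By Corollary \ref{COUNTABLE}, $\Gamma_{Y} = 0$. (If one prefers to avoid Corollary \ref{COUNTABLE}: any normalized simplex supported in $Y_{n}$ is also a normalized simplex in $Y$ with the same simplex gap, so $\Gamma_{Y} \le \Gamma_{Y_{n}} = 1/n$ for every $n$ by Corollary \ref{GAPFORMULA}, and $\Gamma_{Y} \ge 0$ always.) I do not expect any genuine obstacle here: the only steps needing care are the two heredity observations — that the metric $Y$ induces on $Y_{n}$ is the isolated star's path metric, and that the $p$-negative type inequalities and the simplex gap computation restrict to finite subtrees — and both are immediate from the definitions. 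The remainder is bookkeeping over Theorem \ref{MAXIMAL}, Theorem \ref{FORMULA}, and Corollary \ref{COUNTABLE}.
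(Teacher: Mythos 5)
Your proposal is correct and follows essentially the same route as the paper: strict $1$-negative type via finiteness of minimal subtrees and Corollary \ref{HJORTH} (equivalently Theorem \ref{FORMULA}), failure of $p$-negative type for $p>1$ by restricting to the geodesically convex subtrees $Y_{n}$ and invoking Theorem \ref{MAXIMAL} with $\wp_{n}\to 1$, and $\Gamma_{Y}=0$ via Corollary \ref{COUNTABLE}. The extra care you take — noting that $Y_{n}$ inherits its ordinary path metric from $Y$, and offering the alternative $\Gamma_{Y}\le\Gamma_{Y_{n}}=1/n$ argument — is sound but fills in details the paper treats as immediate.
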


\begin{proof} Each normalized $(q,t)$-simplex $D$ in $(Y,d)$ spans a minimal
subtree $T_{D}$ of $Y$ which is finite. By Corollary \ref{HJORTH}, $(T_{D},d)$ has strict $1$-negative type.
Therefore $(Y,d)$ has strict $1$-negative type.

For all $n$, $(Y_{n},d)$ is a subtree of $(Y,d)$ and by Theorem \ref{MAXIMAL} it has maximal $p$-negative type $\wp_{n} =
1+ \frac{\ln (1+ \frac{1}{n-1})}{\ln 2}$. As $n \rightarrow \infty$ we see that
$\wp_{n} \rightarrow 1^{+}$. Hence $(Y,d)$ does not have $p$-negative type for any $p > 1$.
Moreover, $\Gamma_{Y}=0$ by Corollary \ref{COUNTABLE}.
\end{proof}

\section{Acknowledgments}
We would like to thank John Burns, Joe Diestel, Stratos Prassidis and Meera Sitharam for making helpful
comments during the preparation of this paper. We are particularly indebted to Stratos Prassidis who had privately asked
the second named author about the negative type of finite metric trees in the first place. Moreover, the second
named author is very grateful for generous research grants from Canisius College, and for the provision of excellent
sabbatical facilities at the University of New South Wales.

\bibliographystyle{amsalpha}

\end{document}